\DeclareMathAlphabet{\mathcalligra}{T1}{calligra}{m}{n}
 \theoremstyle{plain}
 \newtheorem{thm}{Theorem}[section]
 \newtheorem{lmm}[thm]{Lemma}
 \numberwithin{equation}{section} 
 \numberwithin{figure}{section} 
 \newtheorem{prop}[thm]{Proposition}
 \theoremstyle{remark}
 \newtheorem{rmk}[thm]{Remark}
 \newtheorem*{acknowledgement*}{Acknowledgement}
 \theoremstyle{definition}
 \newtheorem{defi}[thm]{Definition}
\newcommand{\gra}[1]{\left \{ #1 \right \}}
\newcommand{\be}{\begin{equation}}
\newcommand{\ee}{\end{equation}}
\newcommand{\abs}[1]{\left| #1 \right|}
\newcommand{\norm}[1]{\lVert #1 \rVert}
\newcommand{\A}{\mathcal A}
\newcommand{\Sy}{\mathcal S}
\newcommand{\F}{\mathcal F}
\newcommand{\Pb}{\mathcal P}
\newcommand{\R}{\mathbb R}
\newcommand{\Z}{\mathbb Z}
\newcommand{\N}{\mathbb N}
\newcommand{\Co}{{\bf\mathfrak{C}}}
\newcommand{\eps}{\varepsilon}
\newcommand{\wconv}{\rightharpoonup}
\newcommand{\rnd}{\R^{Nd}}
\newcommand{\bpsi}{\boldsymbol{\psi}}
\newcommand{\smallspace}{\hspace{0.25cm}}
\date{\today}
\author{Ugo Bindini}
\author{Luigi De Pascale}
\title[Semiclassical Limit of DFT]{Optimal transport with Coulomb cost and the Semiclassical Limit of Density Functional Theory}
\subjclass[2010]{49J45, 49N15, 49K30}
\keywords{Density Functional Theory, Multimarginal optimal transportation, Monge-Kantorovich problem, duality theory, Coulomb cost}
\begin{document}

\begin{abstract} 
We present some progress in the direction of determining the semiclassical limit of the Hoenberg-Kohn universal functional in Density Functional Theory for Coulomb systems.
In particular we give a proof of the fact that for Bosonic systems with an arbitrary number of particles the limit is the multimarginal optimal transport problem with Coulomb cost and that the same holds for Fermionic systems with 2 or 3 particles.
Comparisons with previous results are reported .
The approach is based on some techniques from the optimal transportation theory.
\end{abstract} 

\maketitle

\section{Introduction and preliminary results}
The ground state of a system of $N$ electrons interacting Coulombically between each others and with $M$ nuclei is described by one of the following minimal values (ground values)  
\begin{equation}\label{bosgrst}
E^\Sy_\hbar = \min_\Sy \  T_\hbar (\psi) + V_{ee}(\psi)- V_{ne} (\psi), 
\end{equation}
or
\begin{equation}\label{fergrst}
E^\A_\hbar = \min_\A \  T_\hbar (\psi) + V_{ee}(\psi)- V_{ne} (\psi), 
\end{equation}
and by the corresponding minimizers (ground states).

The minimization domains are the following sets of wave functions  
\begin{multline*} 
\Sy=\{\psi \in H^1((\R^d \times \Z_2)^N) \ : \ \int |\psi|^2 dz =1 , \ \mbox{for all permutations} \ \sigma \ \mbox{of}\ N\ \mbox{points}\\ 
  \psi((x_{\sigma(1)},\alpha_{\sigma(1)}), \dots, (x_{\sigma(N)}, \alpha_{\sigma(N)}))=\psi((x_1,\alpha_1), \dots, (x_N, \alpha_N)) \} , 
 \end{multline*}
\begin{multline*}
\A=\{\psi \in H^1((\R^d \times \Z_2)^N) \ : \ \int |\psi|^2 dz =1 , \ \mbox{for all permutations} \ \sigma \ \mbox{of}\ N\ \mbox{points}\\ 
  \psi((x_{\sigma(1)},\alpha_{\sigma(1)}), \dots, (x_{\sigma(N)}, \alpha_{\sigma(N)}))=sign(\sigma) \psi((x_1,\alpha_1), \dots, (x_N, \alpha_N)) \} , 
 \end{multline*}
where we adopted the common notation
 $$ \int f(z) dz:= \int f(z_1, \dots, z_N) dz_1\dots dz_N:= \sum_{\alpha_1, \dots, \alpha_N=0,1} \int_{\R^d} f(x_1,\alpha_1; \dots: x_N, \alpha_N) dx_1\dots dx_N.$$
The three terms in the functionals are: 
the kinetic energy 
$$T_\hbar(\psi)= \frac{\hbar^2}{2m} \int | \nabla \psi |^2 (z) dz,$$
the electron-electron interaction energy
$$V_{ee}(\psi) = \int  \sum_{1\leq i < j \leq N} \frac{|\psi|^2 (z)}{|x_i-x_j|} dz$$
and the nuclei-electrons interaction energy
$$V_{ne}(\psi) = \int  \sum_{1\leq i < j \leq N} \sum_{k=1}^M\frac{|\psi|^2 (z)}{|x_i-N_k|} dz.$$

The values $E^\Sy_\hbar$ and $E^\A_\hbar $ represents, respectively, the ground value for a Bosonic and Fermionic system of particles 
with $N$ electrons and $M$ nuclei. The $N_k$ are the positions of the nuclei, while the couple
$(x_i,\alpha_i)$ is the position-spin of the $i$-th electron. In the usual Born interpretation, $|\psi|^2$ is the probability distribution of the $N$ electrons and, according with the indistinguishability principle, it is invariant with respect to permutations of the $N$ $(x_i,\alpha_i)$ variables.

Computing the ground values above amounts to solve a Schr\"odinger equation in $\R^{dN}$ and the numerical cost scale exponentially with $N$.
 The Density Functional Theory (DFT from now on) is an alternative introduced in the late sixties 
by Hohemberg, Kohn and Sham. However the desire to describe the system in term of a different variable is much older and we may consider 
the Thomas-Fermi model as a precursor of this theory.

To every wave function $\psi$ it is associate a probability density on $\R^d$ defined as follows \footnote{In the usual definition the integral in the definition of $\rho_\psi$ is also multiplied by a factor $N$, but here we prefer to deal with probability measures.}:
$$ \rho_\psi(x):= \sum_{\alpha_1, \dots, \alpha_N=0,1} \int_{\R^{d(N-1)}} |\psi |^2 (x,\alpha_1; \dots: x_N, \alpha_N) dx_2\dots dx_N.$$
The map which associate $\rho$ to $\psi$ will be denoted by $\psi \downarrow \rho_\psi$ and $\rho$ will be called single electron density or electronic density. 
If we wish to describe the system in terms of $\rho$, 
the problems above  should be reformulated as a minimization of suitable energies with respect to $\rho$ which, no matter how big is $N$, 
is always a probability measure in $\R^d$.
The exact image of the map $\psi \downarrow \rho_\psi$ was characterized by Lieb \cite{lieb1983} who proved that the image of both $\Sy$ and $\A$ is 
$$ H=\{\rho \ | \ 0 \leq \rho, \ \int \rho =1, \ \sqrt{\rho} \in H^1 (\R^d)  \}.$$

For $\rho \in H$  we introduce the Hohenberg-Kohn functional\footnote{Usually these functionals are denoted by $F_{HK} (\psi)$. For the sake of a lighter notation, here we will not report 
the $HK$.}
\begin{equation}\label{HK1}
F^\Sy_\hbar (\rho) = \inf_{\psi \in \Sy, \ \psi \downarrow \rho} \{ T_\hbar(\psi) +V_{ee}(\psi)\}, 
\end{equation}
\begin{equation}\label{HK2}
F^\A_\hbar (\rho) = \inf_{\psi \in \A, \ \psi \downarrow \rho} \{ T_\hbar(\psi) +V_{ee}(\psi)\}.
 \end{equation}
Then the problems above can be reformulated as follows:
$$E^\Sy_\hbar = \min_{\rho \in H }\  F^\Sy_\hbar (\rho) + N \int v(x) \rho(x) dx , $$
and 
$$E^\A_\hbar = \min_{\rho \in H }\  F^\A_\hbar (\rho) + N \int v(x) \rho(x) dx ,$$ 
where  we denoted 
$$ v(x)= \sum_{k=1}^M \frac{1}{|x-N_k|}.$$

This paper concerns the semiclassical limit of the two functionals $F^\Sy_\hbar (\rho)$ and $F^\A_\hbar (\rho)$ and more precisely it concerns the relations of the multimarginal optimal transport theory 
with these semiclassical limits.
The ties between the DFT for Coulomb systems and Optimal transport appeared first in \cite{buttazzo2012, cotar2013density} and by now have revealed to be a precious tool 
for the understanding. To detail better the results, let us shortly introduce the multimarginal optimal transport problem of interest here.

For every $\rho \in H$ consider the set 
$$\Pi (\rho):= \{P \in \Pb (\R^{dN}) \ | \ \pi_\sharp ^i P= \rho , \ i=1, \dots, N\}, $$
and then the multimarginal optimal transport problem with Coulomb cost
\begin{equation}\label{coulomb}
C(\rho):= \min_{P \in \Pi(\rho)}  \int  \sum_{1\leq i < j \leq N} \frac{1}{|x_i-x_j|} dP.
\end{equation}
Duality for (\ref{coulomb}) and some properties of the functional $C(\rho)$ have been studied in \cite{depascale2015,buttazzo2016}. The structure of $1-$dimensional minimizers has been investigated in  \cite{colombo2013multimarginal}. 

The results on optimal transportation needed in this paper will be reported in the next section.

In this paper we will prove that  
\begin{thm}\label{bosonconv}  For all $\rho \in H$ and $d,N \in \N$, 
$$F^\Sy_\hbar (\rho)  \stackrel{\hbar \to 0}{\rightarrow}  C(\rho),$$
\end{thm}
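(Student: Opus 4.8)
The plan is to establish the two inequalities $\liminf_{\hbar\to0}F^\Sy_\hbar(\rho)\ge C(\rho)$ and $\limsup_{\hbar\to0}F^\Sy_\hbar(\rho)\le C(\rho)$ separately; the first is elementary and the second carries all the work.

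\textbf{Lower bound.} Observe that for a fixed $\psi$ the quantity $T_\hbar(\psi)+V_{ee}(\psi)$ is nondecreasing in $\hbar$ (because $T_\hbar(\psi)=\tfrac{\hbar^2}{2m}\int\abs{\nabla\psi}^2$ is, and $V_{ee}$ does not see $\hbar$), so $\hbar\mapsto F^\Sy_\hbar(\rho)$ is nondecreasing and $\lim_{\hbar\to0^+}F^\Sy_\hbar(\rho)=\inf_{\hbar>0}F^\Sy_\hbar(\rho)$ exists. Given any $\psi\in\Sy$ with $\psi\downarrow\rho$, summing $\abs{\psi}^2$ over the spin variables produces a probability measure $P_\psi$ on $\rnd$; bosonic symmetry makes $P_\psi$ symmetric, so all its one‑particle marginals coincide and equal $\rho_\psi=\rho$, i.e. $P_\psi\in\Pi(\rho)$. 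Hence $V_{ee}(\psi)=\int\sum_{i<j}\frac1{\abs{x_i-x_j}}\,dP_\psi\ge C(\rho)$, and since $T_\hbar(\psi)\ge0$ we get $F^\Sy_\hbar(\rho)\ge C(\rho)$ for every $\hbar>0$, whence $\lim_{\hbar\to0}F^\Sy_\hbar(\rho)\ge C(\rho)$.

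\textbf{Reduction of the upper bound.} If $C(\rho)=+\infty$ the lower bound already closes the argument, so assume $C(\rho)<\infty$. By the monotonicity just noted, it suffices to produce, for each $\delta>0$, a symmetric probability density $p$ on $\rnd$ with one‑particle marginal $\rho$, with $\sqrt p\in H^1(\rnd)$ and $\int\sum_{i<j}\frac1{\abs{x_i-x_j}}\,dP\le C(\rho)+\delta$: then $\psi(z):=2^{-N/2}\sqrt{p(x_1,\dots,x_N)}$ (independent of the spins $\alpha_i$) lies in $\Sy$, satisfies $\psi\downarrow\rho$, $V_{ee}(\psi)\le C(\rho)+\delta$ and $\int\abs{\nabla\psi}^2=\int\abs{\nabla\sqrt p}^2<\infty$, so choosing $\hbar$ with $\tfrac{\hbar^2}{2m}\int\abs{\nabla\sqrt p}^2\le\delta$ gives $\inf_{\hbar>0}F^\Sy_\hbar(\rho)\le C(\rho)+2\delta$. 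Everything thus reduces to a regularization statement for transport plans: starting from an optimal (or nearly optimal) $P\in\Pi(\rho)$, approximate it by symmetric plans with $H^1$ square root, keeping the marginals equal to $\rho$ and the Coulomb cost under control.

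\textbf{The construction.} I would obtain such a $p$ in two steps. First mollify $P$ with a Gaussian kernel at scale $\eps$, setting $P_\eps=P*\eta_\eps^{\otimes N}$. This makes $P_\eps$ smooth and strictly positive and, by the convexity of $(a,b)\mapsto\abs{b}^2/a$ (hence the data‑processing inequality for the Fisher information $I(\mu)=\int\abs{\nabla\mu}^2/\mu$), gives $\int\abs{\nabla\sqrt{P_\eps}}^2=\tfrac14 I(P_\eps)\le\tfrac14 I(\eta_\eps^{\otimes N})=\tfrac{Nd}{4\eps^2}<\infty$; moreover the Coulomb cost passes to the limit, $\int\sum_{i<j}\frac1{\abs{x_i-x_j}}\,dP_\eps\to\int\sum_{i<j}\frac1{\abs{x_i-x_j}}\,dP=C(\rho)$, because each pairwise term becomes $\abs{\cdot}^{-1}$ convolved with a Gaussian and the extra mass dragged toward the diagonal contributes at most $O(\int_{\abs{x_i-x_j}\lesssim\eps}\abs{x_i-x_j}^{-1}dP)\to0$. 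The defect is that $P_\eps$ now has marginal $\rho_\eps=\rho*\eta_\eps\ne\rho$. Second, correct the marginal: choose a diffeomorphism $G_\eps\colon\R^d\to\R^d$, close to the identity (Jacobian $\to\mathrm{Id}$ as $\eps\to0$), with $(G_\eps)_\sharp\rho_\eps=\rho$, and let $p$ be the density of $(G_\eps^{\otimes N})_\sharp P_\eps$: it is symmetric, has marginal $\rho$, its square root stays in $H^1$ (diffeomorphic change of variables close to the identity), and its Coulomb cost is at most $(1+o(1))\int\sum_{i<j}\frac1{\abs{x_i-x_j}}\,dP_\eps=C(\rho)+o(1)$. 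Taking $\eps$ small then finishes the proof.

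\textbf{The main obstacle.} The hard part is the existence of the near‑identity marginal‑correcting map $G_\eps$, since for a generic $\rho\in H$ — which may vanish or blow up — no continuous $G_\eps$ can exist. I would first reduce to $\rho$ smooth and bounded above and below on a cube, replacing $\rho$ by $\rho_k=\rho*\eta_{1/k}$ (suitably truncated and renormalized), using that $\sqrt{\rho_k}\to\sqrt\rho$ in $H^1$ (again from the joint convexity of $(a,b)\mapsto\abs{b}^2/a$) together with the stability of the functional $C(\cdot)$ (cf. \cite{buttazzo2016}) to transfer near‑optimal competitors from $\rho_k$ to $\rho$. For $\rho$ smooth and non‑degenerate on a convex set, $G_\eps$ can be realized as the monotone (Brenier or Knothe) transport of $\rho_\eps$ onto $\rho$, whose Jacobian converges to the identity as $\eps\to0$ by the uniform convergence $\rho_\eps\to\rho$ and interior regularity of optimal maps. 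Packaging the two regularizations together, the whole upper bound rests on a single lemma — every $P\in\Pi(\rho)$ with finite Coulomb cost is the weak limit of symmetric $P_n\in\Pi(\rho)$ with $\sqrt{P_n}\in H^1(\rnd)$ and $\int\sum_{i<j}\abs{x_i-x_j}^{-1}dP_n\to\int\sum_{i<j}\abs{x_i-x_j}^{-1}dP$ — and it is precisely the simultaneous reconciliation of the smoothing, the exact marginal constraint $P_n\in\Pi(\rho)$, and the singular Coulomb cost that constitutes the core difficulty.
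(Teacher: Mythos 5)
Your lower bound and the reduction of $\limsup_{\hbar\to 0}F^\Sy_\hbar(\rho)\le C(\rho)$ to a regularization lemma for transport plans mirror the paper's strategy (the paper packages the whole thing as a $\Gamma$-convergence statement, Theorem~\ref{bos-gam}, but the substance is the same, and the lemma you state at the end is precisely Proposition~\ref{new-weak-full}). The genuine gap is the marginal-correction step, which you correctly flag as ``the main obstacle'' but then do not resolve. A near-identity diffeomorphism $G_\eps$ pushing $\rho_\eps=\rho*\eta_\eps$ onto $\rho$ does not exist in the generality required: for $\rho\in H$ the target may vanish on open sets, have compact support, or oscillate, and the optimal (or Knothe) map from the everywhere-positive $\rho_\eps$ to $\rho$ is then not even continuous, let alone a diffeomorphism whose Jacobian tends to the identity. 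The proposed rescue --- first smooth $\rho$ to $\rho_k$, prove the result there, then ``transfer near-optimal competitors from $\rho_k$ to $\rho$'' --- is circular: producing from a plan in $\Pi(\rho_k)$ a plan in $\Pi(\rho)$ with controlled Fisher information and Coulomb cost is a marginal-correction problem of exactly the same kind, and neither $\sqrt{\rho_k}\to\sqrt{\rho}$ in $H^1$ nor the continuity of $C(\cdot)$ by itself yields a competitor whose density is \emph{exactly} $\rho$, which is what the Hohenberg--Kohn infimum demands.

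The paper avoids the obstacle by replacing the diffeomorphism with a disintegration kernel. With compactly supported mollifiers and $\rho_\eps=\rho*\eta_\eps$, it sets $\gamma_\eps(x,y)=\rho(x)\eta_\eps(y-x)/\rho_\eps(y)$ and $P_\eps(X)=\int\tilde P_\eps(Y)\prod_i\gamma_\eps(x_i,y_i)\,dY$: since $\gamma_\eps(\cdot,y)\,dx$ is a probability for a.e.\ $y$, the plan $P_\eps$ has marginals exactly $\rho$ for \emph{every} $\rho\in H$, with no non-degeneracy hypothesis, and a Cauchy--Schwarz computation gives $\int\abs{\nabla\sqrt{P_\eps}}^2\le N\left(\norm{\sqrt{\rho}}^2_{H^1}+\tfrac{K}{4\eps^2}\right)$. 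A second advantage of the compactly supported mollifier, which your Gaussian forgoes, is that $\tilde P_\eps$ and hence $P_\eps$ vanish in a strip around the diagonal whenever $P$ does, so the cost estimate $\limsup_{\eps\to 0}C(P_\eps)\le C(P)$ reduces to a bounded continuous truncation of $c$; a general finite-cost plan is then treated by splitting off the diagonal mass $P|_{D_r}$ and replacing it by an optimal plan for its own marginal, which is supported away from the diagonal by Theorem~\ref{de-pascale}. In short, your mollify-then-fix-marginals outline is correct, but the fix must be a kernel rather than a map if the result is to hold for all $\rho\in H$ and all $d,N$.
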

and that
\begin{thm}\label{fermionconv}  For all $\rho \in H$, $d=1,2,3,4$, $N=2,3$
$$ F^\A _\hbar (\rho)   \stackrel{\hbar \to 0}{\rightarrow}  C(\rho).$$
\end{thm}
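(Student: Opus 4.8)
The plan is to establish the $\Gamma$-convergence (or at least the convergence of minima) by proving two inequalities: the $\liminf$ (lower bound) $\liminf_{\hbar\to 0} F^\A_\hbar(\rho) \geq C(\rho)$, and the $\limsup$ (upper bound) $\limsup_{\hbar\to 0} F^\A_\hbar(\rho) \leq C(\rho)$. The lower bound should be \emph{softer} and, crucially, should work for all $N$ and all $d$, exactly as in the Bosonic case of Theorem \ref{bosonconv}: given a sequence $\psi_\hbar \in \A$ with $\psi_\hbar \downarrow \rho$ and $T_\hbar(\psi_\hbar)+V_{ee}(\psi_\hbar)$ converging to $\liminf F^\A_\hbar(\rho)$, the measures $P_\hbar := |\psi_\hbar|^2$ (after marginalizing out the spins) lie in $\Pi(\rho)$, they are tight since they all have the fixed marginal $\rho$, so up to subsequences $P_\hbar \rightharpoonup P \in \Pi(\rho)$; since $T_\hbar(\psi_\hbar) \geq 0$ and the Coulomb cost $\sum_{i<j}|x_i-x_j|^{-1}$ is lower semicontinuous and nonnegative, $\liminf V_{ee}(\psi_\hbar) = \liminf \int \sum_{i<j}|x_i-x_j|^{-1}\, dP_\hbar \geq \int \sum_{i<j}|x_i-x_j|^{-1}\, dP \geq C(\rho)$. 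This part uses nothing about antisymmetry, $N$, or $d$.

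The real content — and the reason for the restriction $N=2,3$ (and the dimensional restriction) — is the upper bound. Here one starts from an optimal (or near-optimal) plan $P \in \Pi(\rho)$ for $C(\rho)$ and must \emph{build} a wave function $\psi_\hbar \in \A$ with $\psi_\hbar \downarrow \rho$, with $|\psi_\hbar|^2 \approx P$ in a weak sense, and with vanishing kinetic energy $T_\hbar(\psi_\hbar) \to 0$. The natural strategy is regularization: convolve/mollify $P$ at a scale $\delta = \delta(\hbar) \to 0$ chosen slowly enough that the resulting smooth density has kinetic energy $o(1)$ as $\hbar \to 0$ (the kinetic energy of a mollification at scale $\delta$ scales like $\hbar^2/\delta^2$, so one needs $\hbar/\delta \to 0$), while $\delta$ is still small enough that the Coulomb energy of the mollified plan converges to $C(\rho)$ by continuity/Fatou-type arguments and the single-particle marginal is corrected back to exactly $\rho$. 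The wave function is then a square root, $\psi_\hbar = \sqrt{(\text{smoothed } P)}$ times an appropriate sign/phase. For \emph{Bosons} one simply takes the positive square root, symmetrized, and this is why Theorem \ref{bosonconv} holds with no restriction. For \emph{Fermions}, however, $\psi_\hbar$ must be antisymmetric, hence must change sign, hence must vanish on a large set (the ``nodal set''), and near that nodal set $|\nabla \psi_\hbar|$ is forced to be large — this is precisely the obstruction that prevents the kinetic energy from going to zero in general.

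The main obstacle, therefore, is the construction of an antisymmetric ``square root'' of (a regularization of) $P$ whose kinetic energy still vanishes. The expected mechanism for $N=2,3$: one works with the spin degrees of freedom. With two spin states available, for $N=2$ one can place the two electrons in a spin-singlet, $\psi(x_1,\alpha_1;x_2,\alpha_2) = \phi(x_1,x_2)\cdot\tfrac{1}{\sqrt2}(\delta_{\alpha_1 0}\delta_{\alpha_2 1} - \delta_{\alpha_1 1}\delta_{\alpha_2 0})$, where the spatial part $\phi$ is \emph{symmetric} and may be taken as the nonnegative square root of the smoothed plan — so there is no spatial nodal set and the kinetic-energy estimate is the same as for Bosons. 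For $N=3$ one uses a mixed-symmetry (two-row Young tableau) construction on the spin side, pairing it with a spatial part of the conjugate symmetry type; the point is that for $N \leq 3$ one can still arrange the spatial factor to avoid forced nodal sets, whereas for $N \geq 4$ (with only $|\Z_2|=2$ spin states) the representation theory of $S_N$ no longer permits this and a genuinely oscillating spatial wave function is unavoidable. Concretely I would: (i) reduce $C(\rho)$ to an optimal plan $P$ and mollify it at scale $\delta$, fixing the marginal; (ii) for $N=2$ take the singlet construction above and check $T_\hbar \to 0$ and $V_{ee} \to C(\rho)$ as $\hbar,\delta\to 0$ with $\hbar \ll \delta$; (iii) for $N=3$ carry out the analogous mixed-symmetry construction, the bookkeeping of which is the only genuinely new computation; (iv) combine with the universal lower bound above to conclude. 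I expect step (iii) — exhibiting the $N=3$ spatial factor with controlled gradient and correct marginal, compatible with the antisymmetrization — to be where the work concentrates.
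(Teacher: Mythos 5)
Your overall architecture is sound and matches the paper's: reduce to a $\Gamma$-convergence statement, get the lower bound from tightness of $\Pi(\rho)$ and lower semicontinuity of the Coulomb cost, and get the upper bound by regularizing an optimal plan at a scale $\delta\gg\hbar$, restoring the exact marginal $\rho$, and building a wave function from the smoothed plan. Your $N=2$ singlet construction (symmetric spatial factor $\otimes$ antisymmetric spin) is correct and is exactly what Cotar--Friesecke--Kl\"uppelberg used; the paper deliberately departs from it because it does not extend to $N\geq 3$, and instead exhibits a construction in which $\bpsi_{01}=\bpsi_{10}=0$ and the spatial factors carry signs.

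The genuine gap is in step (iii), and your heuristic for it is actually pointing you in the wrong direction. For $N=3$ with two spin states, every mixed-symmetry spin component still forces the corresponding spatial component to be \emph{antisymmetric in at least one pair} of variables, so you cannot ``arrange the spatial factor to avoid forced nodal sets.'' The mechanism in the paper is different: the spatial factors \emph{do} have nodes, but they are placed inside a strip $D_\alpha$ around the diagonal where the regularized plan already vanishes identically. This requires a separate, nontrivial input that your proposal never invokes: the result (Theorem~\ref{de-pascale}, from Buttazzo--Champion--De~Pascale) that an optimal Coulomb plan is supported away from $D_\alpha$ for a computable $\alpha>0$, combined with Proposition~\ref{new-weak-full} which shows the mollified-and-marginal-corrected plan inherits an off-diagonal support at a (shrinking) scale $\alpha(\eps)$. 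Only then does the explicit sign-carrying device — the auxiliary functions $a,b$ with $a^2+b^2=1$, $a$ odd, $b$ supported in $[-r,r]$, $|a'|,|b'|\lesssim 1/r$ — produce antisymmetric spatial factors $g(x,y)\psi$ with the gradient penalty confined to $\tfrac{C}{\alpha^2}\int\psi^2$, which $\hbar^2$ kills under the scaling $\eps(\hbar)=\max\{\alpha^{-1}(\sqrt\hbar),\sqrt\hbar\}$. This off-diagonal ingredient is also the source of the dimensional restriction ($d=3,4$, where the explicit $g_1,g_\xi$ with cube/fourth roots of unity close the algebra), which your proposal flags but does not account for. Without this ingredient, the kinetic energy near the unavoidable spatial nodes will not vanish as $\hbar\to 0$, and the upper bound fails.
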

The same problem was previously studied in \cite{cotar2013density}. It is easy to see that the proof presented in that paper adapt to prove Theorem \ref{bosonconv} for any $N$. In this case our contribution consists in a more direct use of optimal transport techniques with the consequent simplifications.
The convergence \ref{fermionconv}  was proved in  \cite{cotar2013density} for $N=2$. Here we extend the result to $N=3$ and we are able to enlarge the class of approximating antisymmetric wave functions also for $N=2$.
\begin{rmk}
Although the usual description is limited  to the physical dimension $d=3$, here we explored also other dimensions in the hope to shade some light on the problems which are still open. 
\end{rmk}

Since the functionals appearing in Theorems \ref{bosonconv} and \ref{fermionconv} above are all expressed as minimal values, the natural tool to deal with their convergence is the $\Gamma$-convergence which also we shortly introduce in the next section.

\section{Tools: $\Gamma$-convergence and Multimarginal Optimal Transport}

\subsection{Definition of $\Gamma$-convergence and basic results}

A crucial tool that we will use throughout this paper is $\Gamma$-convergence. 
All the details can be found, for instance, in Braides's book \cite{braides2002gamma} or in the classical book by 
Dal Maso \cite{dal1993introduction}. In what follows, $(X,d)$ is a metric space or a topological space equipped with a convergence.

\begin{defi} Let $(F_n)_n$ be a sequence of functions $X \mapsto \bar\R$. We say that $(F_n)_n$ $\Gamma$-converges to 
$F$ and we write $F_n \xrightarrow[n]{\Gamma} F$ if for any $x \in X$ we have
\begin{itemize}
\item for any sequence $(x_n)_n$ of $X$ converging to $x$
$$ \liminf\limits_n F_n(x_n) \geq F(x) \qquad \text{($\Gamma$-liminf inequality);}$$
\item there exists a sequence $(x_n)_n$ converging to $x$ and such that
$$ \limsup\limits_n F_n(x_n) \leq F(x) \qquad \text{($\Gamma$-limsup inequality).} $$
\end{itemize} \end{defi}

This definition is actually equivalent to the following equalities for any $x \in X$:
$$ F(x) = \inf\left\{ \liminf\limits_n F_n(x_n) : x_n \to x \right\} = \inf\left\{ \limsup\limits_n F_n(x_n) : x_n \to x \right\} $$
The function $x \mapsto  \inf\left\{ \liminf\limits_n F_n(x_n) : x_n \to x \right\}$ is called $\Gamma$-liminf of the sequence $(F_n)_n$ and the other one its $\Gamma$-limsup. A useful result is the following (which for instance implies that a constant sequence of functions does not $\Gamma$-converge to itself in general).

\begin{prop}\label{lsc} The $\Gamma$-liminf and the $\Gamma$-limsup of a sequence of functions $(F_n)_n$ are both lower semi-continuous on $X$. \end{prop}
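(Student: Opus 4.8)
The plan is to prove the two semicontinuity statements together, since the argument for the $\Gamma$-limsup is formally identical to the one for the $\Gamma$-liminf once we record the right characterization. Write $F'(x) = \inf\{\liminf_n F_n(x_n) : x_n \to x\}$ for the $\Gamma$-liminf and $F''(x) = \inf\{\limsup_n F_n(x_n) : x_n \to x\}$ for the $\Gamma$-limsup. We must show that whenever $x^{(k)} \to x$ in $X$, one has $\liminf_k F'(x^{(k)}) \ge F'(x)$, and similarly for $F''$.

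First I would dispose of the trivial case: if $\liminf_k F'(x^{(k)}) = +\infty$ there is nothing to prove, so assume it is some finite (or $-\infty$) value $\ell$, and pass to a subsequence, not relabelled, along which $F'(x^{(k)}) \to \ell$. Fix $\eps > 0$. For each $k$, by definition of $F'(x^{(k)})$ as an infimum over recovery-type sequences, there is a sequence $(y_n^{(k)})_n$ converging to $x^{(k)}$ as $n \to \infty$ with $\liminf_n F_n(y_n^{(k)}) \le F'(x^{(k)}) + \eps$; hence there are arbitrarily large indices $n$ with $F_n(y_n^{(k)}) \le F'(x^{(k)}) + 2\eps$. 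The core of the argument is then a diagonal extraction: using that $y_n^{(k)} \to x^{(k)}$ (in $n$) and $x^{(k)} \to x$ (in $k$), choose an increasing sequence $n_k \to \infty$ such that $z_k := y_{n_k}^{(k)}$ satisfies $d(z_k, x^{(k)}) \le 1/k$ (so $z_k \to x$) and simultaneously $F_{n_k}(z_k) \le F'(x^{(k)}) + 2\eps$. Relabelling $z_k$ as a subsequence indexed by the $n$'s that actually occur — or more carefully, inserting it into a full sequence $(w_n)_n$ with $w_n \to x$ that agrees with $z_k$ when $n = n_k$ — we get $\liminf_n F_n(w_n) \le \liminf_k F_{n_k}(z_k) \le \liminf_k F'(x^{(k)}) + 2\eps = \ell + 2\eps$. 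By definition of $F'(x)$ as an infimum over all sequences tending to $x$, this gives $F'(x) \le \ell + 2\eps$, and letting $\eps \to 0$ yields $F'(x) \le \ell = \liminf_k F'(x^{(k)})$, which is the claimed lower semicontinuity.

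For $F''$ the same scheme works verbatim with $\liminf_n$ replaced by $\limsup_n$ throughout: from $\limsup_n F_n(y_n^{(k)}) \le F''(x^{(k)}) + \eps$ we get $F_n(y_n^{(k)}) \le F''(x^{(k)}) + 2\eps$ for all large $n$, the diagonal choice of $n_k$ and $z_k = y_{n_k}^{(k)}$ goes through identically, and the resulting sequence $(w_n)_n \to x$ has $\limsup_n F_n(w_n) \le \ell + 2\eps$, whence $F''(x) \le \ell$.

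The main obstacle — and the only genuinely delicate point — is the diagonalization: one must extract $n_k$ strictly increasing (so that $(w_n)_n$ is a legitimate sequence and the $\limsup/\liminf$ over $n$ dominates the one over $k$) while simultaneously controlling both $d(z_k, x)$ and $F_{n_k}(z_k)$. This is a standard Moore–Osgood / Attouch-type diagonal argument, and in a first-countable space it is handled exactly as above; in a general topological space one replaces the metric balls $1/k$ by a suitable cofinal family of neighbourhoods of $x$, which is why $\Gamma$-convergence in the non-metrizable setting is usually phrased in terms of the topology directly. For the present paper, where $X$ will be a space of wave functions or probability measures with an explicit (metrizable) convergence, the metric version suffices and I would state it in that generality. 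I would also remark, as the parenthetical in the statement suggests, that this is precisely what makes $\Gamma$-limits robust: the $\Gamma$-liminf and $\Gamma$-limsup are automatically lower semicontinuous relaxations, so a $\Gamma$-limit of a constant sequence $F_n \equiv G$ is the lower semicontinuous envelope $\bar G$ of $G$, not $G$ itself.
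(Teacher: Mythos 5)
First, a point of comparison: the paper itself gives no proof of this proposition. It is stated as a standard fact from $\Gamma$-convergence theory (Braides, Dal Maso), so there is nothing in the paper to measure your argument against; it must stand on its own.

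Your argument for the $\Gamma$-liminf is correct. After passing to a subsequence along which $F'(x^{(k)}) \to \ell$, the extraction of a single index $n_k$ per $k$ is exactly adapted to a liminf: at the indices $n \notin \{n_k\}$ the filler values of $w_n$ can only \emph{decrease} $\liminf_n F_n(w_n)$, so $\liminf_n F_n(w_n) \le \liminf_k F_{n_k}(z_k) \le \ell + 2\eps$, and the conclusion follows.

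The claim that the same scheme works ``verbatim'' for the $\Gamma$-limsup is where the gap is. With $w_n$ equal to $z_k$ only when $n = n_k$ and equal to some filler (say $x$) at the remaining infinitely many indices, $\limsup_n F_n(w_n)$ is the maximum of $\limsup_k F_{n_k}(z_k)$ and of the limsup of $F_n$ evaluated along the filler indices — and the second term is completely out of your control, since it is dictated by the behaviour of the sequence $(F_n)$ at the filler and has nothing to do with $F''(x^{(k)})$. So you cannot conclude $\limsup_n F_n(w_n) \le \ell + 2\eps$, and the proposition for $F''$ is not established.

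The fix uses precisely the extra strength you yourself noticed for the limsup, namely that $F_n(y_n^{(k)}) \le F''(x^{(k)}) + 2\eps$ holds for \emph{all} sufficiently large $n$, not merely for arbitrarily large $n$. Choose $L_k \nearrow \infty$ so that for every $n \ge L_k$ both $d(y_n^{(k)}, x^{(k)}) \le 1/k$ and $F_n(y_n^{(k)}) \le F''(x^{(k)}) + 2\eps$ hold, and then glue \emph{whole blocks}: set $w_n = y_n^{(k)}$ for $L_k \le n < L_{k+1}$. Now every tail of $(w_n)$ is controlled, so $\limsup_n F_n(w_n) \le \limsup_k \bigl(F''(x^{(k)}) + 2\eps\bigr) = \ell + 2\eps$, and the proof closes as in your liminf argument. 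This blockwise construction also works for $F'$, so one argument handles both halves — but it is genuinely a block diagonal, not the pointwise one you wrote.
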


The main interest of $\Gamma$-convergence resides in its consequences in terms of convergence of minima:

\begin{thm} \label{convminima} Let $(F_n)_n$ be a sequence of functions $X \to \bar\R$ and assume that $F_n \xrightarrow[n]{\Gamma} F$. Assume moreover that there exists a compact and non-empty subset $K$ of $X$ such that
$$ \forall n\in N, \; \inf_X F_n = \inf_K F_n $$
(we say that $(F_n)_n$ is equi-mildly coercive on $X$). Then $F$ admits a minimum on $X$ and the sequence $(\inf_X F_n)_n$ converges to $\min F$. Moreover, if $(x_n)_n$ is a sequence of $X$ such that
$$ \lim_n F_n(x_n) = \lim_n (\inf_X F_n)  $$
and if $(x_{\phi(n)})_n$ is a subsequence of $(x_n)_n$ having a limit $x$, then $ F(x) = \inf_X F $. 
\end{thm}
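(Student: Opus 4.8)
The plan is to squeeze all the quantities of interest between the two one-sided estimates $\inf_X F \ge \limsup_n \inf_X F_n$ and $\inf_X F \le \liminf_n \inf_X F_n$, the latter obtained \emph{simultaneously} with the existence of a minimiser by exploiting the compact set $K$. Chaining the two with the trivial $\liminf_n\inf_X F_n \le \limsup_n\inf_X F_n$ forces every quantity in sight to coincide, which is exactly the convergence $\inf_X F_n \to \inf_X F$ together with attainment of $\min_X F$; the final assertion on minimising sequences then drops out of the $\Gamma$-liminf inequality in one line.

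For the first estimate I would use only the $\Gamma$-limsup inequality, and no coercivity: fixing $x \in X$, take a recovery sequence $x_n \to x$ with $\limsup_n F_n(x_n) \le F(x)$; since $F_n(x_n) \ge \inf_X F_n$ for every $n$, passing to the $\limsup$ gives $F(x) \ge \limsup_n \inf_X F_n$, and letting $x$ vary yields $\inf_X F \ge \limsup_n \inf_X F_n$. (Neither coercivity nor non-emptiness of $K$ enters here.)

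For the second estimate, set $\ell := \liminf_n \inf_X F_n$ and select a subsequence along which $\inf_X F_n \to \ell$. Along this subsequence, equi-mild coercivity ($\inf_X F_n = \inf_K F_n$) lets me choose $x_n \in K$ with $F_n(x_n) \le \inf_X F_n + 1/n$, and compactness of $K$ then produces a further subsequence $x_{\phi(n)} \to x \in K$. Applying the $\Gamma$-liminf inequality to this (sub)sequence — which still $\Gamma$-converges to $F$ — gives $F(x) \le \liminf_n F_{\phi(n)}(x_{\phi(n)}) \le \liminf_n\bigl(\inf_X F_{\phi(n)} + 1/\phi(n)\bigr) = \ell$. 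Combined with the first estimate this produces the chain $\inf_X F \le F(x) \le \ell = \liminf_n \inf_X F_n \le \limsup_n \inf_X F_n \le \inf_X F$, so all of these coincide: hence $\inf_X F_n \to \inf_X F$, and the infimum of $F$ is attained at $x$, so it is a minimum. Finally, if $(x_n)_n$ satisfies $\lim_n F_n(x_n) = \lim_n \inf_X F_n = \min_X F$ and $x_{\phi(n)} \to x$, the $\Gamma$-liminf inequality gives $F(x) \le \liminf_n F_{\phi(n)}(x_{\phi(n)}) = \min_X F$, whence $F(x) = \min_X F = \inf_X F$.

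I do not expect any genuine obstacle: the whole argument is $\liminf$/$\limsup$ arithmetic once the two inequalities are in place. The only point requiring a little care is the bookkeeping of the two nested extractions in the second estimate — first a subsequence realising the $\liminf$ of the infima, then a convergent subsequence extracted from the compact $K$ — together with the routine remark that the $\Gamma$-liminf inequality passes to subsequences (interleave the given sequence with the constant value $x$ on the missing indices).
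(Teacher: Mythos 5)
Your argument is correct, and the bookkeeping (the two nested extractions and the ``interleave with the constant $x$'' trick to pass the $\Gamma$-$\liminf$ inequality to a subsequence) is handled properly. Note, however, that the paper does not prove this statement at all: it is a background result on $\Gamma$-convergence quoted without proof, with a pointer to Braides \cite{braides2002gamma} and Dal Maso \cite{dal1993introduction}; the proof you have written is precisely the standard one found in those references, so there is nothing to compare it against within the paper.
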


\subsection{Multimarginal optimal transportation and composition of optimal transport plans}

In this subsection we present some basic results about multimarginal optimal transportation with Coulomb cost. 

An element $P$ of $\Pi(\rho)$ is commonly called a transport plan for $\rho$.

If $\rho \in \mathcal{P}(\R^d)$, define
\[
 \mu_{\rho}(t) = \sup_{x \in \R^d} \rho(B(x,t)).
\]

\begin{defi} The \emph{concentration} of $\rho$ is
\[
 \mu_{\rho} = \lim_{t \to 0} \mu_{\rho}(t).
\]
\end{defi}

Note that $\rho \in H$ implies $\mu_\rho = 0$, since $\sqrt{\rho} \in H^1$ implies $\rho \in L^1$.

It is commonly assumed that, if $\mu_\rho < 1/N$, then $\mathcal{C}(\rho) < +\infty$. Here we provide a simple argument adapted to the particular case $\rho \in H$:

\begin{prop} \label{L1-L3} If $p \geq \frac{d}{d-1}$ and $\rho \in L^1(\R^d) \cap L^p(\R^d)$, then $\mathcal{C}(\rho) < +\infty$. 
\end{prop}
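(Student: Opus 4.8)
The plan is to exhibit an explicit transport plan $P \in \Pi(\rho)$ with finite Coulomb cost, by decomposing $\rho$ into a fixed number of "spread-out" pieces and pairing them so that the supports of any two marginals stay separated. Concretely, first I would observe that it suffices to build $P$ with $\int \sum_{i<j} |x_i - x_j|^{-1} \, dP < +\infty$; since there are only $\binom{N}{2}$ summands, it is enough to control each pairwise term $\int |x_i-x_j|^{-1}\,dP$ separately.

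The key step is a splitting lemma: given $\rho \in L^1 \cap L^p$ with $p \ge d/(d-1)$, I would write $\rho = \frac{1}{N}\sum_{k=1}^{N} \rho_k$ where each $\rho_k$ is (a constant multiple of) $\rho$ restricted to a measurable set, chosen so that the sets $\{\rho_k > 0\}$ are pairwise disjoint — for instance by slicing $\R^d$ into thin parallel slabs and grouping them periodically modulo $N$, which keeps each $\rho_k$ a probability measure while scattering its mass throughout $\R^d$. Then set $P = \frac{1}{N}\sum_{\sigma} (\rho_{\sigma(1)} \otimes \cdots \otimes \rho_{\sigma(N)})$, the symmetrization over cyclic (or all) permutations $\sigma$ of a product plan; by construction each marginal of $P$ equals $\rho$, so $P \in \Pi(\rho)$. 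Because in each product term the $i$-th and $j$-th factors are supported on disjoint slab-unions, the off-diagonal behaviour of $|x_i - x_j|^{-1}$ is tamed, and what remains is a genuinely $d$-dimensional estimate.

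For that estimate I would use the standard fact that for product measures $\int \int |x-y|^{-1}\, d\mu(x)\, d\nu(y) < \infty$ whenever $\mu, \nu \in L^1 \cap L^p$ with $p > d/(d-1)$ (the kernel $|x-y|^{-1}$ lies in weak-$L^{d/(d-1)}$ near the diagonal, so the Hardy–Littlewood–Sobolev / Young inequality applies: $|\,\cdot\,|^{-1} * \nu \in L^{q}$ for suitable $q$, and then pair against $\mu \in L^{q'}$). The endpoint $p = d/(d-1)$ is exactly where HLS is borderline; here the disjointness of the slab supports provides the extra room, since on the support of the relevant pair the singularity of the kernel at $x_i = x_j$ is partially excised. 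This is the main obstacle: making the endpoint case work, i.e. checking that the slab construction genuinely removes enough of the near-diagonal contribution (or, alternatively, invoking a sharper Lorentz-space version of Young's inequality that covers $p = d/(d-1)$ directly). The hypothesis $\rho \in H$ is only used to guarantee $\rho \in L^1$; the extra integrability $\rho \in L^p$ must be assumed or derived from $\sqrt{\rho} \in H^1$ via Sobolev embedding when $d \le $ the relevant threshold, but in the statement it is taken as given, so no further work is needed there. Finally, $\mathcal{C}(\rho) \le \int \sum_{i<j} |x_i-x_j|^{-1}\, dP < +\infty$ concludes the proof.
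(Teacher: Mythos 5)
Your overall strategy --- exhibit an explicit $P\in\Pi(\rho)$ with finite cost and bound each pairwise Coulomb term --- is the right one, but the slab decomposition at its core does not deliver what you claim, and it is also unnecessary. The paper simply takes the plain product plan $P=\rho\otimes\cdots\otimes\rho$, which is trivially in $\Pi(\rho)$, and reduces the statement to showing that $\int\int \abs{x-y}^{-1}\rho(x)\rho(y)\,dx\,dy$ is finite. That is done with an elementary kernel split (Lemma \ref{lemma-L1-L3}): write $\abs{x-y}^{-1}=\chi_{B(x,a)}(y)\abs{x-y}^{-1}+\chi_{B(x,a)^c}(y)\abs{x-y}^{-1}$, estimate the near piece by H\"older as $\norm{\rho}_{L^p}$ times the $L^{p'}$ norm of the truncated kernel, and the far piece by $a^{-1}\norm{\rho}_{L^1}$. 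No decomposition of $\rho$ is involved.

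The concrete gap in your argument is that interleaving $\R^d$ into thin slabs grouped modulo $N$ makes $\spt\rho_i$ and $\spt\rho_j$ disjoint as sets but at distance zero from one another: adjacent slabs share a boundary hyperplane, so there exist $x_i\in\spt\rho_i$, $x_j\in\spt\rho_j$ with $\abs{x_i-x_j}$ arbitrarily small, and the kernel $\abs{x_i-x_j}^{-1}$ remains unbounded on the product of the supports. The ``excision of the near-diagonal singularity'' you invoke to rescue the endpoint therefore does not take place. Moreover the endpoint $p=d/(d-1)$ needs no rescuing at all for the plain product plan: by interpolation $L^1\cap L^{d/(d-1)}\subset L^{2d/(2d-1)}$, and since $1<\tfrac{2d}{2d-1}<\tfrac{d}{d-1}$ the Hardy--Littlewood--Sobolev inequality with $\lambda=1$ and $r=s=\tfrac{2d}{2d-1}$ already gives $\int\int\abs{x-y}^{-1}\rho(x)\rho(y)\,dx\,dy\leq C\norm{\rho}_{L^{2d/(2d-1)}}^2<\infty$, a strictly subcritical and non-borderline application. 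So the obstacle you built the slab construction to overcome is not there, and the construction as described would not overcome it anyway.
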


\begin{lmm} \label{lemma-L1-L3} Let $p \geq \frac{d}{d-1}$. Then there exists a constant $M$ such that, for every $\varphi \in L^1(\R^d) \cap L^p(\R^d)$,
\[
 \sup_{x \in \R^d} \int \frac{\varphi(y)}{\abs{x-y}} dy \leq M \norm{\varphi}_{L^1\cap L^p}. 
\]
\end{lmm}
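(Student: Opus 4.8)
The plan is to split the Coulomb kernel $\abs{x-y}^{-1}$ at the unit ball around $x$, estimating the near-field part by H\"older's inequality and the far-field part trivially by the $L^1$ norm. Concretely, for fixed $x$ write
\[
 \int \frac{\varphi(y)}{\abs{x-y}}\,dy = \int_{\abs{x-y}\le 1} \frac{\varphi(y)}{\abs{x-y}}\,dy + \int_{\abs{x-y}>1} \frac{\varphi(y)}{\abs{x-y}}\,dy.
\]
The second integral is bounded by $\int_{\abs{x-y}>1}\abs{\varphi(y)}\,dy \le \norm{\varphi}_{L^1}$, uniformly in $x$. For the first integral, apply H\"older with exponents $p$ and its conjugate $p'$:
\[
 \int_{\abs{x-y}\le 1} \frac{\abs{\varphi(y)}}{\abs{x-y}}\,dy \le \norm{\varphi}_{L^p} \left( \int_{\abs{z}\le 1} \frac{1}{\abs{z}^{p'}}\,dz \right)^{1/p'}.
\]
The remaining point is that the last integral is finite, which holds precisely when $p' < d$, i.e.\ when $\frac{p}{p-1} < d$, i.e.\ when $p > \frac{d}{d-1}$; the boundary case $p = \frac{d}{d-1}$ (so $p' = d$) must be treated by a slightly different argument.

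For the borderline exponent $p = \frac{d}{d-1}$, I would instead interpolate: pick any $q$ with $1 < q < \frac{d}{d-1}$, note that $\varphi \in L^1 \cap L^p \subseteq L^q$ with $\norm{\varphi}_{L^q} \le \norm{\varphi}_{L^1}^{1-\theta}\norm{\varphi}_{L^p}^{\theta} \le \norm{\varphi}_{L^1\cap L^p}$ for the appropriate $\theta \in (0,1)$, and then run the near-field H\"older estimate with exponent $q$ (whose conjugate $q' < d$ makes $\abs{z}^{-q'}$ integrable on the unit ball). Collecting both pieces, one obtains
\[
 \sup_{x} \int \frac{\abs{\varphi(y)}}{\abs{x-y}}\,dy \le \norm{\varphi}_{L^1} + c_{d,p}\,\norm{\varphi}_{L^p} \le M \norm{\varphi}_{L^1\cap L^p},
\]
with $M = 1 + c_{d,p}$ depending only on $d$ and $p$, which is the claim.

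I do not expect any serious obstacle here; the only point requiring care is the borderline exponent, where the naive H\"older split fails logarithmically and one must downgrade to a subcritical $L^q$ norm controlled by the $L^1 \cap L^p$ norm via interpolation. Everything else is a routine truncation-of-the-kernel argument, and the uniformity in $x$ is automatic because after the change of variables $z = y - x$ the bounding constants no longer depend on $x$.
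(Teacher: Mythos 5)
Your plan is the same truncate-the-kernel argument the paper uses: split $\abs{x-y}^{-1}$ into a near-field part handled by H\"older against $\norm{\varphi}_{L^p}$ and a far-field part bounded trivially by $\norm{\varphi}_{L^1}$; keeping a free cut-off radius $a$ (as the paper does, to optimize afterwards) versus fixing $a=1$ (as you do) is cosmetic. The one real difference is that you correctly flag the borderline exponent: at $p = \frac{d}{d-1}$ one has $p' = d$, so $\int_{\abs{z}\le 1}\abs{z}^{-p'}\,dz$ diverges and H\"older with exponent $p$ fails. Your fix --- interpolating $L^1\cap L^p \hookrightarrow L^q$ for a sub-critical $q$ and running H\"older with $q$ in place of $p$ --- is sound. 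The paper's proof does not address this case; its formula $\norm{f_a}_{L^{p'}} = \frac{\omega_d d}{d-p'}a^{d-p'}$ has a vanishing denominator at $p'=d$, so as written it only covers $p > \frac{d}{d-1}$. This is harmless in the downstream application (Proposition \ref{L1-L3} is invoked with $\rho \in L^1 \cap L^{d/(d-2)}$, which lies strictly above the threshold), but the lemma as stated does include the boundary, and your version is the more careful one.
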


\begin{proof} Fix $x \in \R^d$, and consider a parameter $a > 0$ to split
\[
 \frac{1}{\abs{x-y}} = \frac{\chi_{B(x,a)}(y)}{\abs{x-y}} + \frac{\chi_{B(x,a)^c}(y)}{\abs{x-y}} = f_a(y) + g_a(y).
\]

By H\"older's inequality,
\[
 \int \frac{\varphi(y)}{\abs{x-y}} dy \leq \norm{\varphi}_{L^p} \norm{f_a}_{L^{p'}} + \norm{\varphi}_{L^1} \norm{g_a}_{L^\infty},
\]
and it is easy to compute
\[
 \norm{f_a}_{L^{p'}} = \frac{\omega_d d}{d-p'}a^{d-p'} \ \ \ \  \mbox{and} \ \ \ \  \norm{g_a}_{L^\infty} = a,
\]
which gives the thesis and also allows to find the optimal value for the parameter $a$.
\end{proof}

We may now prove Proposition \ref{L1-L3}:

\begin{proof} Consider the transport plan $P(x_1,\dotsc, x_N) = \rho(x_1) \dotsm \rho(x_N)$; using Lemma \ref{lemma-L1-L3},
\begin{align*}
 C(P) &= \int c(X) dP(X) = \sum_{1 \leq i < j \leq N} \int_{\R^d \times \R^d} \frac{\rho(x_i)\rho(x_j)}{\abs{x_i-x_j}} dx_i dx_j \\
 {} &\leq \sum_{1 \leq i < j \leq N} \int_{\R^d} \gra{\sup_{x_j \in \R^d} \int_{\R^d} \frac{\rho(x_i)}{\abs{x_i-x_j}} dx_i} \rho(x_j) dx_j \\
 {} &\leq \sum_{1 \leq i < j \leq N} M \norm{\rho}_{L^1 \cap L^p} \int_{\R^d} \rho(x_j) dx_j \leq \sum_{1 \leq i < j \leq N} M \norm{\rho}_{L^1 \cap L^p},
\end{align*}
which is finite.
\end{proof}

\begin{defi}
 If $\alpha > 0$, let
 \[
  D_{\alpha} = \gra{X \in \rnd\ \mbox{s.t.}\ \exists i \neq j \text{ with } \abs{x_i-x_j} < \alpha}
 \]
 be an open strip around
 \[
  D_0 = \gra{X \in \rnd\ \mbox{s.t.}\ \exists i \neq j \text{ with } x_i = x_j},
 \]
 which is the set where the cost function $c$ is singular.
\end{defi}

\begin{defi}
 Given $P \in \Pi(\rho)$, we say that $P$ is \emph{off-diagonal} if
  \begin{equation} \label{P-diag} 
  P(D_\alpha) = 0 \ \  \text{for some $\alpha > 0$.}
  \end{equation}
\end{defi}

Note that if a transport plan $P$ has the property 
\eqref{P-diag}, then $C(P)$ is finite. The viceversa is not true in general, i.e. there may well be transport plans of finite cost whose supports have distance 0 from the set $D_0$.

However, this is not the case if $P$ is optimal, as recently proved in \cite{buttazzo2016}:

\begin{thm} \label{de-pascale}
 Let $\rho \in \Pb(\R^d)$ with $\mu_\rho < \tfrac{1}{N(N-1)^2}$, with $\mathcal{C}(\rho) < +\infty$, and let $\beta$ be such that
 \[
  \mu_{\rho}(\beta) < \frac{1}{N(N-1)^2}.
 \]

 If $P \in \Pi(\rho)$ is an optimal plan for the problem \eqref{coulomb}, then $P \mid _{D_\alpha} = 0$ for every $\alpha$ such that
 \[
  \alpha < \frac{2\beta}{N^2(N-1)}.
 \]
\end{thm}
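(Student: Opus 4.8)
The plan is to argue by contradiction. Suppose $P$ is an optimal plan but $P(D_\alpha)>0$ for some $\alpha<\tfrac{2\beta}{N^2(N-1)}$. Since $D_\alpha$ is the union, over the ordered pairs $(i,j)$ with $i\neq j$, of the open sets $\gra{X:\abs{x_i-x_j}<\alpha}$, at least one of them --- after relabelling, say $\gra{X:\abs{x_1-x_2}<\alpha}$ --- carries positive $P$-mass. I will then build a competitor $\tilde P\in\Pi(\rho)$ with $\int c\,d\tilde P<\int c\,dP=\mathcal C(\rho)<+\infty$, contradicting optimality. The competitor comes from a local perturbation that \emph{swaps the second coordinate} between configurations concentrated near a ``bad'' point of $\spt P$, where $x_1$ and $x_2$ are within $\alpha$, and configurations concentrated near a ``good'' point, where the second particle can be placed far from every other particle; swapping a single coordinate automatically preserves all $N$ marginals, which is what makes this a legal perturbation.

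The geometric core is a consequence of $\mu_\rho(\beta)$ being small. Fix $X_0\in\spt P$ with $\abs{x_1^0-x_2^0}<\alpha$. Then there is $X_0'\in\spt P$ such that (a) $x_2'\notin\bigcup_{k\neq 2}B(x_k^0,\beta)$ and (b) $x_k'\notin B(x_2^0,\beta)$ for every $k\neq 2$. Indeed, since $\pi_\sharp^i P=\rho$ for all $i$, the configurations violating (a) form a set of $P$-measure at most $\sum_{k\neq 2}\rho(B(x_k^0,\beta))\leq(N-1)\mu_\rho(\beta)$, and the same bound holds for (b); hence the configurations satisfying both have $P$-measure at least $1-2(N-1)\mu_\rho(\beta)>0$ under the hypothesis, and a positive-measure set meets $\spt P$. (Note $X_0\neq X_0'$, since (a) forces $\abs{x_1^0-x_2'}\geq\beta>\alpha>\abs{x_1^0-x_2^0}$.)

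Now choose disjoint neighbourhoods $U\ni X_0$ and $U'\ni X_0'$, small enough that $\abs{x_1-x_2}<\alpha$ throughout $U$ and that (a)--(b) hold up to an error $\delta$ between any configuration of $U$ and any of $U'$; shrinking the larger one, assume $P(U)=P(U')=:\eps>0$, and let $\Gamma$ be any coupling of $P|_U$ with $P|_{U'}$. Define $\Phi(X,X')$ to be $X$ with its second coordinate replaced by that of $X'$, and $\Psi(X,X')$ to be $X'$ with its second coordinate replaced by that of $X$, and set
\[
 \tilde P=P-P|_U-P|_{U'}+\Phi_\sharp\Gamma+\Psi_\sharp\Gamma .
\]
Since $U,U'$ are disjoint, $\tilde P\geq 0$; and since swapping the second coordinate is marginal-preserving, $\tilde P\in\Pi(\rho)$. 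Only the interactions of the second particle are affected, so, writing $E_2(Y)=\sum_{k\neq 2}\abs{y_k-y_2}^{-1}$ for the energy of the second particle in a configuration $Y$,
\[
 \int c\,d\tilde P-\int c\,dP=\int_{U\times U'}\bigl[(E_2(\Phi)-E_2(X))+(E_2(\Psi)-E_2(X'))\bigr]\,d\Gamma ,
\]
with $\Phi,\Psi$ evaluated at $(X,X')$. By (a)--(b), $E_2(\Phi)$ and $E_2(\Psi)$ are at most $\tfrac{N-1}{\beta}+o(1)$ as $\delta\to0$, while $E_2(X)\geq\abs{x_1-x_2}^{-1}>\tfrac1\alpha$ on $U$ and $E_2(X')\geq0$; hence the integrand is at most $\tfrac{2(N-1)}{\beta}-\tfrac1\alpha+o(1)$, which is strictly negative once $\alpha<\tfrac{\beta}{2(N-1)}$ and $\delta$ is small enough. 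This contradicts optimality. Repeating the argument for each ordered pair yields $P\mid_{D_\alpha}=0$.

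I expect the main obstacle to be the measure-theoretic bookkeeping behind the last paragraph: one must pick $X_0,X_0'$ and the neighbourhoods so that $c$ is finite --- hence continuous --- on the region touched by the perturbation (this is possible because a single coordinate is moved and, by (a), it lands at distance $\geq\beta$ from the others, so no new singularity appears), so that the displayed cost identity is genuine rather than a vacuous inequality between extended reals; a harmless further shrinking of $\beta$ to some $\beta'<\beta$ turns the closed conditions (a)--(b) into open ones and lets the localization go through. A clean way to package all of this is to invoke the necessity of $c$-cyclical monotonicity of $\spt P$ for lower semicontinuous costs of finite optimal value and apply it directly to the two-configuration swap, reducing matters to the elementary failure of $c(\Phi)+c(\Psi)\geq c(X_0)+c(X_0')$ for the chosen pair. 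Finally, the computation above only uses $\mu_\rho(\beta)<\tfrac1{2(N-1)}$ and yields the range $\alpha<\tfrac{\beta}{2(N-1)}$, which contains the one in the statement; the more cautious constants there seem to reflect a less optimized version of the same construction.
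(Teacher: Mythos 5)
The paper does not contain its own proof of Theorem \ref{de-pascale}; the result is quoted from \cite{buttazzo2016}, so there is no internal argument to compare against. Judged on its own terms, your swap-and-compare construction is correct in substance and in fact yields a \emph{stronger} conclusion than the statement: your computation gives $P|_{D_\alpha}=0$ for all $\alpha<\beta/(2(N-1))$, whereas the theorem only asserts it for $\alpha<2\beta/(N^2(N-1))$; these coincide for $N=2$ and yours is strictly larger for $N\ge 3$. Likewise you only use $\mu_\rho(\beta)<1/(2(N-1))$, which is implied by the stated hypothesis $\mu_\rho(\beta)<1/(N(N-1)^2)$ for all $N\ge 2$. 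The less optimal constants in \cite{buttazzo2016} presumably come from handling a general class of singular costs with a uniform argument, not from a flaw in your scheme.

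Two bookkeeping points deserve to be made explicit before the argument is fully watertight. First, existence and localization of $X_0'$: the set of $X'$ satisfying (a)--(b) is closed of $P$-measure at least $1-2(N-1)\mu_\rho(\beta)>0$, hence meets $\spt P$; to get an open neighbourhood $U'$ on which perturbed versions of (a)--(b) hold uniformly, replace $\beta$ by some $\beta'<\beta$ as you suggest — the set $\bigcap_{k\neq 2}\{\abs{x_2'-x_k^0}>\beta'\}\cap\bigcap_{k\neq 2}\{\abs{x_k'-x_2^0}>\beta'\}$ is open and contains every $X_0'$ satisfying (a)--(b) with parameter $\beta$. Second, the cost identity must be an identity of finite numbers: $E_2(\Phi)$ and $E_2(\Psi)$ are uniformly bounded by $(N-1)/(\beta-2\delta)$ on $U\times U'$, the pairwise terms not involving particle $2$ cancel exactly because $\Phi$ and $\Psi$ only change the second coordinate, and the subtracted $E_2(X)$, $E_2(X')$ are $\Gamma$-integrable since $\int_{U\cup U'}c\,dP\le\mathcal C(\rho)<\infty$; so the displayed difference $\int c\,d\tilde P-\int c\,dP$ is genuine, strictly negative, and contradicts optimality. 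Repeating over the ordered pairs gives $P(D_\alpha)=0$. If you prefer to package this through $c$-cyclical monotonicity, note that the needed notion is the multimarginal one (permuting a single coordinate between two configurations), and that you must choose $X_0,X_0'$ with $c(X_0),c(X_0')<\infty$ — possible since $c$ is finite $P$-a.e.\ — otherwise the monotonicity inequality holds vacuously and yields no contradiction.
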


\section{ A $\Gamma$-convergence result and proofs of the main theorems}
It is useful to reformulate the different variational problems so that they have a common domain. 
For every $\psi \in \Sy$ or $\psi \in \A$  there is a natural transport plan $P_\psi \in \rho_\psi$ defined by 
$$P_\psi=\sum_{\alpha_1, \dots, \alpha_N} |\psi|^2 (x_1,\alpha_1; \dots; x_N, \alpha_N).$$
Then for every $\rho \in H$ define $\F^\Sy_\hbar , \ \F^\A _\hbar: \Pi(\rho) \to \R^+ \cup \{+ \infty\}$ as follows:
$$ \F^\Sy_\hbar (P)= \left\{ \begin{array}{ll}
T_\hbar(\psi) + V_{ee} (\psi) & \mbox{if} \ P=P_\psi\ \mbox{for some} \ \psi \in \Sy \\
+\infty & \mbox{otherwise},
\end{array}
\right.$$
and 
$$ \F^\A_\hbar (P)= \left\{ \begin{array}{ll}
T_\hbar(\psi) + V_{ee} (\psi) & \mbox{if} \ P= P_\psi\ \mbox{for some} \ \psi \in \A \\
+\infty & \mbox{otherwise}.
\end{array}
\right.$$
It follows that 
\begin{equation}\label{refbos}
 F^\Sy _\hbar (\rho)= \min_{\Pi(\rho)} \F^\Sy_\hbar (P), 
\end{equation} 
and 
\begin{equation}\label{refferm}
 F^\A _\hbar (\rho)= \min_{\Pi(\rho)} \F^\A_\hbar (P). 
\end{equation} 
Concerning the optimal transport problem we only need to incorporate the symmetry constraint in the transport functional.
For every $\sigma\in \mathfrak{S}^N$ permutation of $\{1, \dots, N\}$ and every $P \in \Pb (\R^{Nd})$ we consider $\sigma_\sharp P$ the image measure via $\sigma$ of the measure $P$,
where with a little abuse of notations we have denoted by 
$$\sigma(x_1, \dots, x_N):= (x_{\sigma(1)}, \dots, x_{\sigma(N)}).$$
For every $P \in \Pi(\rho)$ we may consider the measure 
$$\tilde P:= \frac{1}{N!} \sum_{\sigma \in \mathfrak{S}^N} \sigma_\sharp P.$$
 We have that $\tilde P \in \Pi(\rho)$ and, since the cost is also permutation invariant the transport cost of $\tilde P$ is the same as the cost of $P$.
We say that $P$ is symmetric if $\sigma_\sharp P=P$ for every $\sigma \in \mathfrak{S}^N$. Then, for example, the measure $\tilde P$ above is symmetric. 
Define, then 
\begin{defi}
$$ \Co_\Sy(P)= \left\{ \begin{array}{ll}
\int c dP & \mbox{if} \ P \ \mbox{is symmetric}, \\
+\infty & \mbox{otherwise}.
\end{array}
\right.$$
\end{defi}
By the previous discussion 
\begin{equation}\label{reftrasp}
C(\rho)= \min_{\Pi(\rho)} \Co_\Sy (P).
\end{equation}
Then the common domain of minimization of $\F_\hbar^\Sy$, $\F^\A_\hbar$ and $\Co_\Sy$ is $\Pi(\rho)$ which we consider embedded in the space of probability measures $\Pb$ 
equipped with the tight convergence. 
\begin{defi} A generalized sequence of wave functions $\{\psi_\hbar\}$ converges to a transport plan $P$ if 
$ P_{\psi_\hbar}\rightharpoonup P$.
\end{defi}
We will prove
\begin{thm}\label{bos-gam} For every $\rho \in H$ the functionals $\F^\Sy _\hbar$ are equicoercive and 
$$ \F^\Sy _\hbar \stackrel{\Gamma}{\to }\Co_\Sy,$$  
with respect to the tight convergence of measures.
\end{thm}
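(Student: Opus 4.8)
The plan is to establish the three ingredients separately: equicoercivity, the $\Gamma$-$\liminf$ inequality, and the $\Gamma$-$\limsup$ inequality, all with respect to tight convergence on $\Pi(\rho)$. Equicoercivity is essentially free: every $\F^\Sy_\hbar$ is supported on $\Pi(\rho)$, and $\Pi(\rho)$ is tight (its marginals are all equal to the fixed $\rho$, so Prokhorov applies) and closed under tight convergence, hence compact; so one may simply take $K=\Pi(\rho)$ in Theorem \ref{convminima}. The only subtlety is to check that the set of plans of the form $P_\psi$ with finite energy is nonempty for every $\rho\in H$, which follows from Lieb's characterization (the map $\psi\downarrow\rho_\psi$ is onto $H$) together with the fact that for a smooth enough $\psi$ the quantities $T_\hbar(\psi)$ and $V_{ee}(\psi)$ are finite.

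For the $\Gamma$-$\liminf$ inequality, let $P_{\psi_\hbar}\rightharpoonup P$ with $\liminf \F^\Sy_\hbar(P_{\psi_\hbar})<+\infty$. Along the relevant subsequence each $\psi_\hbar\in\Sy$, so each $P_{\psi_\hbar}$ is symmetric; symmetry is preserved under tight convergence (testing against symmetric bounded continuous functions), so $P$ is symmetric and $\Co_\Sy(P)=\int c\,dP$. Since $T_\hbar(\psi_\hbar)\geq 0$, it suffices to show $\liminf_\hbar V_{ee}(\psi_\hbar)\geq\int c\,dP$, i.e. $\liminf_\hbar \int c\,dP_{\psi_\hbar}\geq\int c\,dP$. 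This is lower semicontinuity of $P\mapsto\int c\,dP$ along tight convergence, which holds because $c$ is nonnegative and lower semicontinuous (indeed $c=+\infty$ on $D_0$ and continuous off it): approximate $c$ from below by the bounded continuous truncations $c\wedge k$, use $\int (c\wedge k)\,dP_{\psi_\hbar}\to\int (c\wedge k)\,dP$ for each $k$, and let $k\to\infty$ by monotone convergence. Note that here the kinetic term being simply discarded is harmless precisely because $\hbar\to 0$.

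The $\Gamma$-$\limsup$ inequality is where the real work lies, and I expect it to be the main obstacle. Given $P\in\Pi(\rho)$ we must produce $\psi_\hbar\in\Sy$ with $P_{\psi_\hbar}\rightharpoonup P$ and $\limsup_\hbar\big(T_\hbar(\psi_\hbar)+V_{ee}(\psi_\hbar)\big)\leq\Co_\Sy(P)$. If $P$ is not symmetric there is nothing to prove, so assume $P$ symmetric with $\int c\,dP<+\infty$ (otherwise again trivial). The strategy is a two-step regularization-then-desingularization of $P$: first approximate $P$ in the tight topology by symmetric plans that are \emph{off-diagonal} (supported away from $D_0$) and absolutely continuous with smooth density and still have marginals $\rho$ — or marginals close to $\rho$, to be corrected afterwards — with transport cost converging to $\int c\,dP$; here Theorem \ref{de-pascale} is the natural tool to first reduce to the case where $P$ itself is off-diagonal (by looking at an optimal $P$, if $\mu_\rho$ is small enough) before smoothing. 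Second, for such a regular off-diagonal symmetric plan $Q$ with density $q$, set $\psi=\sqrt{q}$, symmetrized so that $\psi\in\Sy$; then $P_\psi=Q$, $V_{ee}(\psi)=\int c\,dQ$ is finite because $Q$ avoids $D_0$, and $T_\hbar(\psi)=\frac{\hbar^2}{2m}\int|\nabla\sqrt q|^2\to 0$ as $\hbar\to 0$ since the Fisher-information-type integral $\int|\nabla\sqrt q|^2$ is a \emph{fixed} finite number (finiteness uses $\sqrt\rho\in H^1$ for the marginal, propagated to $q$ by the construction). A diagonal argument over the approximation parameter and $\hbar$ then yields the desired recovery sequence. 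The delicate points are: (i) constructing the smooth off-diagonal symmetric approximants while keeping the marginal exactly equal to $\rho$ (one typically mollifies and then reglues the marginal, which requires care to preserve $\sqrt\rho\in H^1$ and the bound on the cost); and (ii) handling $\rho\in H$ with no further integrability — one may need the truncation/splitting estimates in the spirit of Lemma \ref{lemma-L1-L3} together with Proposition \ref{L1-L3} to control $V_{ee}$ during the approximation. These technical constructions, rather than the soft $\Gamma$-convergence machinery, are the heart of the theorem.
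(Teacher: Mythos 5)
Your proof plan follows essentially the same architecture as the paper: equicoercivity via compactness of $\Pi(\rho)$ (Lemma \ref{prok}), the $\Gamma$-$\liminf$ inequality by discarding the nonnegative kinetic term and using lower semicontinuity of the Coulomb cost under tight convergence (your truncation argument with $c\wedge k$ and monotone convergence spells out a detail the paper leaves implicit), and the $\Gamma$-$\limsup$ inequality via mollification, marginal repair, and taking square roots of plan densities as bosonic wave functions with $\eps(\hbar)=\sqrt\hbar$.

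The genuine gap is in how you propose to reduce the $\limsup$ inequality to an off-diagonal plan. You suggest that Theorem \ref{de-pascale} lets you ``reduce to the case where $P$ itself is off-diagonal (by looking at an optimal $P$),'' but the $\Gamma$-$\limsup$ inequality is pointwise and must be established for \emph{every} $P\in\Pi(\rho)$, whereas Theorem \ref{de-pascale} yields off-diagonality only for \emph{optimal} plans. Replacing a general $P$ by an optimal one does not give a recovery sequence converging to $P$. The paper resolves this in Proposition \ref{new-weak-full} by a surgical decomposition: for a general finite-cost $P$, write $P=Q_r+P|_{D_r}$, keep $Q_r$ intact, and replace only the small near-diagonal piece $P|_{D_r}$ by $\tilde P_r/\lambda_r$ where $\tilde P_r$ is an \emph{optimal} plan for the renormalized marginals of $P|_{D_r}$; Proposition \ref{L1-L3} ensures $\tilde P_r$ has finite cost, so Theorem \ref{de-pascale} applies to it and the reconstituted plan $P_r=Q_r+\tilde P_r/\lambda_r$ is off-diagonal, tightly converges to $P$, and has cost bounded by $C(P)$ as $r\to 0$. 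This targeted replacement of only the problematic mass is the idea your sketch lacks; without it, the diagonal argument over $\eps$ and $\hbar$ has no admissible starting point when $\operatorname{supp}P$ touches $D_0$. The remainder of your sketch — mollify with a product kernel $H_\eps$, then compose with the disintegration $\gamma_\eps(x,y)=\rho(x)\eta_\eps(y-x)/\rho_\eps(y)$ to restore marginal $\rho$, control the Fisher information, and verify $\limsup_\eps C(P_\eps)\le C(P)$ using a continuous truncation of $c$ — is precisely Proposition \ref{new-weak}, so that part of your plan is sound.
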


\begin{thm}\label{fer-gam}
For every $\rho \in H$ the functionals $\F^\A _\hbar$ are equicoercive and for $d=2,3,4$ and $N=2,3$, 
$$ \F^\A _\hbar \stackrel{\Gamma}{\to }\Co_\Sy,$$  
with respect to the tight convergence of measures.
\end{thm}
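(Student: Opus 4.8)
The statement has three parts: equi-coercivity of $\F^\A_\hbar$, the $\Gamma$-liminf inequality, and the $\Gamma$-limsup inequality. The first two go exactly as for $\F^\Sy_\hbar$ in Theorem~\ref{bos-gam}. Equi-coercivity holds because $\F^\A_\hbar\equiv+\infty$ off $\Pi(\rho)$, which is compact for the tight convergence (all marginals equal the fixed measure $\rho$) and non-empty by Lieb's characterization of the image of $\A$. For the $\Gamma$-liminf the key point is that $|\psi|^2$ is permutation-\emph{symmetric} even when $\psi$ is antisymmetric (the sign squares to $1$), so each $P_{\psi_\hbar}$ and every tight limit $P$ is symmetric; since $\F^\A_\hbar(P_{\psi_\hbar})\ge V_{ee}(\psi_\hbar)=\int c\, dP_{\psi_\hbar}$ and $c\ge 0$ is lower semicontinuous, one gets $\liminf_\hbar \F^\A_\hbar(P_{\psi_\hbar})\ge\int c\, dP=\Co_\Sy(P)$.

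So the theorem is really about the $\Gamma$-limsup: for symmetric $P\in\Pi(\rho)$ with $\int c\, dP<+\infty$ one must produce antisymmetric $\psi_\hbar$ with $\psi_\hbar\downarrow\rho$, $P_{\psi_\hbar}\rightharpoonup P$ and $\limsup_\hbar\bigl(T_\hbar(\psi_\hbar)+V_{ee}(\psi_\hbar)\bigr)\le\int c\, dP$. First I would reduce the class of $P$: since $\rho\in H$ gives $\mu_\rho=0$, Theorem~\ref{de-pascale} shows every minimizer of $\Co_\Sy$ on $\Pi(\rho)$ is off-diagonal, and together with the lower semicontinuity of the $\Gamma$-limsup (Proposition~\ref{lsc}) and an approximation argument it suffices to treat symmetric $P$ which, after mollification, have a smooth bounded density supported in $\{\abs{x_i-x_j}\ge\alpha,\ i\ne j\}$, still have all marginals equal to $\rho$, and satisfy $\sqrt P\in H^1(\rnd)$; restoring the marginal \emph{exactly} after mollification is where the $H^1$-regularity of $\sqrt\rho$ enters.

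The new ingredient — and where the restrictions $N\in\{2,3\}$, $d\in\{2,3,4\}$ appear — is to realize a prescribed symmetric density by an \emph{antisymmetric} wave function through the spin variable. For $N=2$ one writes $\psi=\phi^{\mathrm{sym}}\otimes\chi_{\mathrm{sing}}+\sum_m\phi^m_{\mathrm{asym}}\otimes\chi^m_{\mathrm{trip}}$; already the singlet term with $\phi^{\mathrm{sym}}=\sqrt P$ gives $\sum_\alpha|\psi|^2=P$ with $\psi\in\A$, and allowing also the triplet terms is exactly the enlargement of the admissible class announced in the introduction. For $N=3$ the three-particle spin space contains no copy of the sign representation of $S_3$, so for $d\ge2$ there is no totally antisymmetric spatial factor of prescribed modulus; one is forced to use the two-dimensional standard representation, writing $\psi=\phi_1\otimes\chi_1+\phi_2\otimes\chi_2$ with $(\chi_1,\chi_2)$ an orthonormal spin doublet transforming under the standard representation and $(\phi_1,\phi_2)=\sqrt P\,(u_1,u_2)$, where $(u_1,u_2)$ is an $S_3$-equivariant map into the unit circle. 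Such a $u$ can be built by choosing $P$ supported on configurations whose three particles lie in three pairwise separated cells of a fine partition of $\R^d$: the support then splits into a free $S_3$-orbit of sheets on which a circle-valued equivariant map is assigned cell by cell, with $\sqrt P$ vanishing between sheets so that $\sqrt P\,\nabla u\in L^2$; it is in making this geometry compatible with the required $H^1$ control that the admissible $(d,N)$ are constrained. Finally I would take $\psi_\hbar$ equal to this $\psi$ with mollification parameter $\delta=\delta(\hbar)\to0$ slowly enough that $T_\hbar(\psi_\hbar)$, which is of order $\hbar^2\delta(\hbar)^{-2}$, tends to $0$, while $V_{ee}(\psi_\hbar)=\int c\, dP_\delta\to\int c\, dP$ ($c$ is bounded and continuous on the off-diagonal support and $P_\delta\rightharpoonup P$) and $\psi_\hbar\downarrow\rho$ by the marginal correction.

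The hard part is this last construction. For $N=2$ it is essentially classical; for $N=3$ one must work with the two-dimensional standard representation of $S_3$ and solve an equivariant normalization problem whose geometric obstructions are precisely what confine the argument to $d=2,3,4$. The other delicate point — present already in the bosonic case but more rigid here, since antisymmetrization must be preserved — is keeping the constraint $\psi_\hbar\downarrow\rho$ exact throughout the mollification and normalization.
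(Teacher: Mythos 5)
You have the architecture of the proof right — equi-coercivity from compactness of $\Pi(\rho)$, the $\Gamma$-liminf from closedness of symmetry and lower semicontinuity of $V_{ee}$, and the $\Gamma$-limsup via (a) reduction to off-diagonal plans using Theorem~\ref{de-pascale}, (b) mollification plus a marginal-restoring composition to get an $H^1$ square root while preserving $\pi^i_\sharp P=\rho$, and (c) an antisymmetrization device passing through the spin variable. Your identification of the representation-theoretic obstruction for $N=3$ (no sign representation in $(\C^2)^{\otimes 3}$, hence the spatial part must carry the standard representation) is correct and matches what forces the paper away from the simple singlet construction.

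The genuine gap is precisely the step you label ``the hard part.'' Your proposed construction of the $S_3$-equivariant phase $u$ — chopping $\R^d$ into a fine partition, restricting $P$ to configurations with particles in pairwise distinct cells, and assigning $u$ cell by cell across a free $S_3$-orbit of sheets — is not carried out, and it is not the mechanism the paper uses. The paper's construction is global and coordinate-wise: fix smooth $a,b:\R\to\R$ with $a^2+b^2=1$, $a$ odd and constant $\pm1$ off $[-r,r]$, $b$ even and supported in $[-r,r]$, set $a_j=a(x_j-y_j)$, $b_j=b(x_j-y_j)$, and build $g_1,g_\xi$ as linear combinations of $a_1,\ b_1a_2,\ b_1b_2a_3,\dots$ with complex unimodular coefficients; the key identity is the telescoping $a_1^2+b_1^2a_2^2+b_1^2b_2^2a_3^2\equiv 1$ \emph{on} $\spt\psi$, which uses the off-diagonal support through the elementary inequality $\abs{x-y}\le\sqrt{d}\,\max_j\abs{x_j-y_j}$ rather than any cell decomposition. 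For $N=3$ one then evaluates the same $g$'s at the pairs $(x,y)$, $(x,z)$, $(y,z)$ and spreads the density over the six mixed spin strings. This also explains the dimension restriction, which you assert but do not justify: the telescoping sum has $d$ terms and one needs $d$ unimodular complex coefficients (cube roots of unity for $d=3$, $\pm1,\pm i$ for $d=4$) realizing two ``orthogonal'' antisymmetric functions $g_1,g_\xi$ with $\abs{g_1}^2+\abs{g_\xi}^2$ the telescoping sum — that is a concrete algebraic constraint, not a topological one about sheets and cells. Your cell-based sketch would also run into the problem that an off-diagonal plan is not supported on pairwise-separated \emph{cells}: particles can sit near cell boundaries, and $\sqrt P$ need not vanish there, so the patched phase would not lie in $H^1$ without further work. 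Finally, a smaller point: you dismiss the $N=2$ case as ``essentially classical,'' but the paper's new $N=2$ construction (with $\bpsi_{01}=\bpsi_{10}=0$ and $\bpsi_{00},\bpsi_{11}$ nonzero via $g_1,g_\xi$) is precisely what allows the extension to $N=3$ and is part of what the paper claims as new; the singlet ansatz $\bpsi_{01}=-\bpsi_{10}=\psi$ does not generalize, as the paper explains.

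In short: the scaffolding, the reduction lemmas, and the liminf side are sound, but the central Proposition~\ref{fermionic} — the only genuinely new ingredient of this theorem — is replaced in your write-up by a different and unverified sketch. To complete the proof you would either need to carry your equivariant-partition idea through (including the $H^1$ estimates and the explicit source of the $d\le4$ restriction), or switch to the coordinate-wise $a,b$ telescoping construction.
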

The proofs of Theorems  \ref{bos-gam} and \ref{fer-gam} above coincide for a large part and will constitute the resto of this section.

\subsection{Equicoerciveness}

\begin{lmm} \label{prok} $\Pi(\rho)$ is compact with respect to the weak convergence. 
\end{lmm}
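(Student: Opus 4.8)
The plan is to invoke Prokhorov's theorem: a subset of $\Pb(\R^{Nd})$ is relatively compact for the tight (weak) convergence if and only if it is tight and bounded in total variation. Since every element of $\Pi(\rho)$ is a probability measure, boundedness is automatic, so the whole content is to check tightness and closedness.

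\emph{Tightness.} Fix $\eps > 0$. Because $\rho$ is a probability measure on $\R^d$, it is itself tight, so there is a compact set $K \subset \R^d$ with $\rho(\R^d \setminus K) < \eps/N$. Then $K^N \subset \R^{Nd}$ is compact, and for any $P \in \Pi(\rho)$ the complement $\R^{Nd} \setminus K^N$ is contained in $\bigcup_{i=1}^N \{X : x_i \notin K\}$, whence
\[
 P(\R^{Nd} \setminus K^N) \leq \sum_{i=1}^N P(\{X : x_i \notin K\}) = \sum_{i=1}^N \rho(\R^d \setminus K) < \eps,
\]
using that the $i$-th marginal of $P$ is $\rho$. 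Since the compact set $K^N$ does not depend on $P$, the family $\Pi(\rho)$ is (uniformly) tight.

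\emph{Closedness.} By Prokhorov, tightness gives relative compactness, so it remains to show $\Pi(\rho)$ is closed under tight convergence. If $P_n \in \Pi(\rho)$ and $P_n \rightharpoonup P$ tightly, then for each $i$ and each bounded continuous $\varphi : \R^d \to \R$ the map $X \mapsto \varphi(x_i)$ is bounded continuous on $\R^{Nd}$, so $\int \varphi(x_i)\, dP_n \to \int \varphi(x_i)\, dP$; but the left-hand side equals $\int \varphi\, d\rho$ for every $n$, hence $\pi^i_\sharp P = \rho$ for all $i$, i.e. $P \in \Pi(\rho)$. Thus $\Pi(\rho)$ is compact.

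There is no real obstacle here; the only point requiring a little care is the passage from tightness of the single marginal $\rho$ to uniform tightness of the whole family $\Pi(\rho)$, which is handled by the union bound above and crucially uses that \emph{all} marginals coincide with the fixed $\rho$.
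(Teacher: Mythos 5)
Your proof is correct and follows essentially the same route as the paper: uniform tightness via the union bound $\R^{Nd}\setminus K^N \subseteq \bigcup_i\{x_i\notin K\}$ together with the marginal constraint, Prokhorov's theorem for relative compactness, and closedness by testing against bounded continuous functions of a single coordinate.
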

\begin{proof} First we prove that $\Pi(\rho)$ is tight. In fact, for $\eps > 0$ let $K \subseteq \R^d$ compact such that
\[
 \int_{K^c} \rho(x) dx \leq \eps.
\]
Observe that
\[
 \left( K^N \right)^c = \left( K^c \times \R^{d(N-1)} \right) \cup \left( \R^d \times K^c \times \R^{d(N-2)} \right) \cup \dotsb \cup \left( \R^{d(N-1)} \times K^c \right),
\]
and for every $P \in \Pi(\rho)$
\begin{align*}
 \int_{(K^N)^c} dP(X) \leq N\eps
\end{align*}
and $K^N$ is compact. By Prokhorov's Theorem we deduce that $\Pi(\rho)$ is relatively compact. However, if $P_n \wconv P$ and $P_n \in \Pi(\rho)$, for every function $\phi(x_j) \in C_b(\rnd)$ depending only on the $j$-th variable one has
\[
 \int_{\rnd} \phi(x_j) dP_n(X) = \int_{\R^d} \phi(x) \rho(x) dx
\]
which implies
\[
 \int_{\R^d} \phi(x) \rho(x) dx = \int_{\rnd} \phi(x_j) dP(X) = \int_{\R^d} \phi(x) d \pi_{\#}^j(P)(x).
\]

Since $\phi$ was arbitrary, $\pi_{\#}^j(P)(x) = \rho(x)$, and hence $P \in \Pi(\rho)$.
\end{proof}

\subsection{$\Gamma$-$\liminf$ inequality}
\begin{prop}
Let $\rho \in H$ and let  $P$ a probability measure on $\rnd$. If $\{\psi_\hbar\}_\hbar  \subseteq \Sy(\rho)$  (or $\A(\rho)$) is a generalized sequence such that 
$P_{\psi_\hbar} \wconv P$. Then
\begin{itemize} 
\item[(i)] $P$ is symmetric, 
\item[(ii)]  $\liminf_{\hbar \to 0}  \F^\Sy _\hbar (P_{\psi_\hbar}) \geq \Co(P).$
\end{itemize}
\end{prop}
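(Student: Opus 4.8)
The plan is to prove the two claims separately, starting with the symmetry of the limit, which is the easier of the two.

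\medskip

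\noindent\textbf{Part (i): symmetry of $P$.}
For each permutation $\sigma \in \mathfrak{S}^N$ the wave functions in $\Sy$ (resp.\ $\A$) satisfy $|\psi_\hbar|^2 \circ \sigma = |\psi_\hbar|^2$, and since the sign disappears when we take the modulus this holds in the antisymmetric case as well. Summing over the spin variables we get $\sigma_\sharp P_{\psi_\hbar} = P_{\psi_\hbar}$ for every $\hbar$. Because the pushforward $Q \mapsto \sigma_\sharp Q$ is continuous for the tight (weak) convergence of probability measures---test against $\phi \in C_b(\rnd)$ and note $\int \phi\, d(\sigma_\sharp Q) = \int (\phi\circ\sigma)\, dQ$ with $\phi \circ \sigma \in C_b$---passing to the limit in $\sigma_\sharp P_{\psi_\hbar} = P_{\psi_\hbar}$ gives $\sigma_\sharp P = P$. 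Hence $P$ is symmetric, so in particular $\Co_\Sy(P) = \int c\, dP$ is finite or not according to the integrability of $c$ against $P$, and the right-hand side of (ii) is just $\int c\, dP$ (I read $\Co(P)$ in the statement as $\Co_\Sy(P)$).

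\medskip

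\noindent\textbf{Part (ii): the liminf inequality.}
Write $\F^\Sy_\hbar(P_{\psi_\hbar}) = T_\hbar(\psi_\hbar) + V_{ee}(\psi_\hbar)$. Since $T_\hbar(\psi_\hbar) = \frac{\hbar^2}{2m}\int |\nabla\psi_\hbar|^2 \geq 0$, it suffices to show
\[
 \liminf_{\hbar \to 0} V_{ee}(\psi_\hbar) \geq \int c\, dP.
\]
Now $V_{ee}(\psi_\hbar) = \int c(X)\, dP_{\psi_\hbar}(X)$ where $c(X) = \sum_{i<j} |x_i - x_j|^{-1}$ is a lower semicontinuous, nonnegative function on $\rnd$ (it is continuous with values in $[0,+\infty]$, being a sum of such functions). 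The liminf inequality is then exactly the statement that the functional $Q \mapsto \int c\, dQ$ is sequentially lower semicontinuous along the tightly convergent sequence $P_{\psi_\hbar} \wconv P$. This is the portmanteau-type property for integration of a nonnegative lower semicontinuous cost against weakly convergent probability measures: approximate $c$ from below by an increasing sequence $c_k \in C_b(\rnd)$ with $c_k \uparrow c$ pointwise (truncate and use that $c$ is lsc and finite off $D_0$), so that for each fixed $k$
\[
 \liminf_{\hbar \to 0} \int c\, dP_{\psi_\hbar} \geq \liminf_{\hbar \to 0} \int c_k\, dP_{\psi_\hbar} = \int c_k\, dP,
\]
and then let $k \to \infty$ and invoke monotone convergence on the right. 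This yields $\liminf_{\hbar \to 0} V_{ee}(\psi_\hbar) \geq \int c\, dP$, and together with $T_\hbar \geq 0$ this gives (ii).

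\medskip

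\noindent\textbf{Main obstacle.}
The only genuinely delicate point is the lower semicontinuity of $Q\mapsto\int c\,dQ$ for the singular Coulomb cost $c$: one must be careful that $c$ is only lsc (not continuous) because of the singularity on $D_0$, and that the approximating functions $c_k$ are honestly bounded and continuous. The kinetic term plays no role here beyond its nonnegativity---it is discarded---so this part of the argument is identical for $\Sy$ and $\A$, which is consistent with the remark in the paper that the two proofs coincide for a large part. (The role of the kinetic energy, and of the hypotheses $\rho \in H$, $N \le 3$, will instead enter the $\Gamma$-$\limsup$ construction carried out later.)
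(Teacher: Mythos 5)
Your proof follows essentially the same route as the paper: you establish permutation invariance of the limit by noting that $\sigma_\sharp$ commutes with tight convergence (the paper simply says this is "a closed condition"), and you obtain (ii) by discarding the nonnegative kinetic term and invoking lower semicontinuity of $Q \mapsto \int c\, dQ$ for the lsc nonnegative Coulomb cost under weak convergence, which the paper asserts without spelling out the portmanteau approximation $c_k \uparrow c$. Your version is simply a more detailed writeup of the same two-line argument, and it is correct.
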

\begin{proof} It is easy to see that invariance with respect to permutations is a closed condition in the space of probability measures. For the inequality:
$\liminf_{\hbar \to 0}  \F^\Sy _\hbar (P_{\psi_\hbar}) =\liminf_{\hbar \to 0} T_\hbar (\psi_\hbar) + V_{ee}(\psi_\hbar) \geq \liminf_{\hbar \to 0} V_{ee}(\psi_\hbar)\geq \Co(P).$
\end{proof}

\subsection{An approximation procedure for $P\in \Pi(\rho)$}
\begin{prop} \label{new-weak} Let $\rho \in H$, and $P \in \Pi(\rho)$ be a transport plan such that
\begin{equation}\label{P-diag}
 P|_{D_\alpha} = 0
\end{equation}
for some $\alpha > 0$.

Then there exists a family of plans $\{P_\eps\} _{\eps > 0}$ such that:
\begin{enumerate}
 \item for every $\eps > 0$, $P_\eps \in \Pi(\rho)$ and is absolutely continuous with respect to the Lebesgue measure, with density given by $\varphi_\eps^2(X)$, where $\varphi_\eps$ is a suitable $H^1$ function;
 \item $P_\eps \wconv P$ as $\eps \to 0 ;$
 \item $\limsup_{\eps \to 0} \Co(P_\eps) \leq \Co(P);$
 \item the kinetic energy of $\varphi_\eps$ is explicitly controlled:
 \[
  \int \abs{\nabla \varphi_\eps(X)}^2 dX \leq N \left( \norm{\sqrt{\rho}}^2_{H^1} + \frac{K}{4\eps^2} \right)
 \]
 for a suitable constant $K>0$. 
\end{enumerate}
\end{prop}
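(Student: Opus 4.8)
The plan is to build $P_\eps$ by a \emph{local tensorisation} of $P$ against a smooth partition of unity at scale $\eps$, taken of the special form $\sum_j \zeta_j^2 \equiv 1$. First I would fix, for each $\eps>0$, functions $\{\zeta_j\}_j$ on $\R^d$ — for instance rescale one fixed nonnegative bump on the lattice $\eps\Z^d$ and renormalise — so that $\zeta_j\in C_c^\infty(\R^d)$, $\zeta_j\ge0$, $\sum_j\zeta_j^2\equiv 1$, $\mathrm{diam}(\spt\zeta_j)\le C(d)\eps$, the cover has bounded overlap, and $\sum_j\abs{\nabla\zeta_j}^2\le \frac{K}{4\eps^2}$ pointwise for a dimensional constant $K$. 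Writing $\theta_j=\zeta_j^2$, $m_j=\int\theta_j\rho$ and, for $\mathbf j=(j_1,\dots,j_N)$, $p_{\mathbf j}=\int\prod_{i=1}^N\theta_{j_i}(x_i)\,dP(X)$, I would set
\[
 P_\eps := \sum_{\mathbf j} p_{\mathbf j}\,\bigotimes_{i=1}^N \frac{\theta_{j_i}\rho}{m_{j_i}},
\]
an absolutely continuous measure with density $\varphi_\eps^2$, where $\varphi_\eps=\big(\sum_{\mathbf j}\Phi_{\mathbf j}^2\big)^{1/2}$ and $\Phi_{\mathbf j}(X)=\sqrt{p_{\mathbf j}/\prod_i m_{j_i}}\;\prod_{i=1}^N\zeta_{j_i}(x_i)\sqrt{\rho(x_i)}$. (One checks that $m_j>0$ whenever $p_{\mathbf j}>0$ — so the normalisations make sense — that $0\le p_{\mathbf j}\le 1$, and that $\sum_{\mathbf j}p_{\mathbf j}=1$, using $\sum_j\theta_j\equiv 1$ and Tonelli.)

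The combinatorial heart is the identity $\sum_{\mathbf j:\,j_i=j}p_{\mathbf j}=\int\theta_j\,d(\pi^i_\sharp P)=m_j$, obtained by summing out all indices except the $i$-th and using $\sum_j\theta_j\equiv1$. Substituting it into the $i$-th marginal of $P_\eps$ gives $\pi^i_\sharp P_\eps=\sum_j\theta_j\rho=\rho$, i.e.\ $P_\eps\in\Pi(\rho)$: this is (1). For (2) I would write $\int\phi\,dP_\eps=\int\big[\sum_{\mathbf j}\big(\prod_i\theta_{j_i}(x_i)\big)\big(\int\phi\;d\!\bigotimes_i\tfrac{\theta_{j_i}\rho}{m_{j_i}}\big)\big]\,dP(X)$ for $\phi\in C_b(\rnd)$; the bracket is an average of $\phi$ against probability measures supported $O(\eps)$-close to $X$, hence tends to $\phi(X)$, and dominated convergence (by $\norm{\phi}_\infty$, $P$ finite) yields $P_\eps\wconv P$. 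For (3), since $P|_{D_\alpha}=0$ and $D_\alpha$ is open we have $\spt P\subseteq\{\abs{x_i-x_j}\ge\alpha\ \forall i\neq j\}$; hence $p_{\mathbf j}\neq0$ forces $\bigotimes_i\frac{\theta_{j_i}\rho}{m_{j_i}}$ to be concentrated where $\abs{x_i-x_j}\ge\alpha-C(d)\eps$, so that $c$ is bounded there (by $\binom N2\tfrac2\alpha$, for $\eps$ small), and the same dominated-convergence passage gives $\Co(P_\eps)\to\Co(P)$, which is finite by the remark following \eqref{P-diag}; in particular $\limsup_\eps\Co(P_\eps)\le\Co(P)$.

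Point (4) is where the choice $\sum_j\zeta_j^2\equiv1$ pays off twice. Since $\varphi_\eps^2=\sum_{\mathbf j}\Phi_{\mathbf j}^2$ is a locally finite sum of squares of $H^1$ functions, the Cauchy–Schwarz inequality $\abs{\nabla\varphi_\eps}^2\le\sum_{\mathbf j}\abs{\nabla\Phi_{\mathbf j}}^2$ holds a.e.; computing $\abs{\nabla\Phi_{\mathbf j}}^2=\sum_l\abs{\nabla_{x_l}\Phi_{\mathbf j}}^2$ (only the $l$-th factor of $\Phi_{\mathbf j}$ is differentiated), integrating in $X$, and invoking the marginal identity $\sum_{\mathbf j:\,j_l=j}p_{\mathbf j}=m_j$ once more, the bound collapses to $\int\abs{\nabla\varphi_\eps}^2\le N\sum_j\int\abs{\nabla(\zeta_j\sqrt\rho)}^2$. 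Expanding the square, the cross term is $2\int\sqrt\rho\,\nabla\sqrt\rho\cdot\sum_j\zeta_j\nabla\zeta_j=\int\sqrt\rho\,\nabla\sqrt\rho\cdot\nabla\big(\sum_j\zeta_j^2\big)=0$, while $\sum_j\zeta_j^2\equiv1$ and $\sum_j\abs{\nabla\zeta_j}^2\le\frac{K}{4\eps^2}$ dispose of the two remaining terms, giving exactly $\int\abs{\nabla\varphi_\eps}^2\le N\big(\norm{\sqrt\rho}_{H^1}^2+\frac{K}{4\eps^2}\big)$.

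The real difficulty is the simultaneous constraint: $P_\eps$ must be absolutely continuous with marginals \emph{exactly} $\rho$ — which rules out a naive mollification, whose marginals would only be $\rho*\eta_\eps$ — yet have a density whose square root lies in $H^1$ with a clean $O(\eps^{-2})$ kinetic bound. The device that handles both at once is taking the partition of unity in the ``sum of squares'' form $\sum_j\zeta_j^2\equiv1$: this makes $\varphi_\eps$ literally the $\ell^2$-norm of a family of $H^1$ functions, so $\varphi_\eps\in H^1$ via the Cauchy–Schwarz gradient inequality, and it forces the cross term in the kinetic energy to vanish identically, producing the stated constant with no slack wasted. Everything else — measurability and local finiteness of the sums, strict positivity of the $m_j$, and the two dominated-convergence arguments — is routine once the identity $\sum_{\mathbf j:\,j_i=j}p_{\mathbf j}=m_j$ is in hand.
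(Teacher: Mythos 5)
Your proof is correct, but it takes a genuinely different route from the paper's. The paper first mollifies $P$ by convolution with the tensorized kernel $H_\eps(Y-X)=\prod_i\eta_\eps(y_i-x_i)$, obtaining a smooth plan $\tilde P_\eps$ whose marginals are $\rho*\eta_\eps$ rather than $\rho$, and then corrects the marginals by composing with the ``backward'' disintegration kernel $\gamma_\eps(x,y)=\rho(x)\eta_\eps(y-x)/\rho_\eps(y)$, in the spirit of composition of transport plans. You instead bypass the correction step entirely: by choosing a scale-$\eps$ partition of unity in the sum-of-squares form $\sum_j\zeta_j^2\equiv 1$, setting $\theta_j=\zeta_j^2$, and replacing $P$ on each product block $\prod_i\spt\theta_{j_i}$ by the tensor product $p_{\mathbf j}\bigotimes_i\theta_{j_i}\rho/m_{j_i}$, the marginal identity $\sum_{\mathbf j: j_i=j}p_{\mathbf j}=m_j$ delivers $\pi^i_\sharp P_\eps=\rho$ exactly. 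The cancellation mechanism in the kinetic-energy estimate is strictly parallel in the two proofs — the paper's cross term vanishes because $\int_{\R^d}\nabla\eta_\eps=0$, yours because $\nabla\bigl(\sum_j\zeta_j^2\bigr)=0$ — and both arrive at the same $N\bigl(\norm{\sqrt\rho}_{H^1}^2+K/(4\eps^2)\bigr)$ bound. Your version is somewhat more self-contained (no auxiliary plan $\tilde P_\eps$, no $Y$-integral, the $H^1$ membership of $\varphi_\eps$ is literally the elementary $\abs{\nabla\abs{\mathbf\Phi}}\le\abs{\nabla\mathbf\Phi}$ inequality for a locally finite vector of $H^1$ functions), at the cost of locally replacing $P$ by product measures inside each block; this does not matter here since only tight convergence of $P_\eps$ and of the truncated cost are needed. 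Both constructions share the paper's other features: they generalize to plans with different marginals $\rho^1,\dotsc,\rho^N$, and — since your partition of unity is the same in every coordinate, so $p_{\sigma(\mathbf j)}=p_{\mathbf j}$ whenever $P$ is symmetric — they both preserve permutation invariance, a point the paper needs in a subsequent remark.
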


The proof is made of several steps.

We start regularizing by convolution. The plans we obtain are regular but do not have the same marginals as $P$.
We use the standard mollifiers  $\eta \colon \R^d \to \R$ as
\[
 \eta (u) = \left \{ \begin{array}{ll} ke^{-\frac{1}{1-\abs{u}^2}} & \abs{u} < 1 \\ 0 & \abs{u} \geq 1 \end{array} \right.
\]
where $k = k(d) > 0$ is a suitable constant, which depends only on the dimension $d$, such that $\int_{\R^d} \eta(u) du = 1$. Set also
\[
 \eta_\eps(u) = \frac{1}{\eps^d} \eta\left ( \frac{u}{\eps} \right ).
\]

The next Lemma is well known and it is a useful tool to estimate some $L^2$ norms which will appear later.

\begin{lmm} \label{mollifier-estimate}
 There exists a constant $K = K(d) > 0$, depending only on the dimension $d$, such that
 \[
  \int_{B(0,\eps)} \frac{\abs{\nabla \eta_\eps(u)}^2}{\eta_\eps(u)} du = \frac{K}{\eps^2}.
 \]
\end{lmm}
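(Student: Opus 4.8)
The plan is to reduce the claimed identity to a one–dimensional Gaussian–type computation by exploiting the special radial form of the mollifier $\eta$. Write $\eta(u) = k e^{-\varphi(|u|^2)}$ on $B(0,1)$ with $\varphi(s) = \frac{1}{1-s}$, so that $\nabla \eta(u) = -2 u\, \varphi'(|u|^2)\, \eta(u)$ and hence
\[
 \frac{|\nabla \eta(u)|^2}{\eta(u)} = 4 |u|^2 \left(\varphi'(|u|^2)\right)^2 \eta(u).
\]
The key observation is that this ratio is again of the form ``smooth function times $\eta$'', so the integrand is pointwise controlled by a fixed integrable function supported in $B(0,1)$; in particular $\int_{B(0,1)} \frac{|\nabla\eta(u)|^2}{\eta(u)}\,du$ is a finite constant depending only on $d$. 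Call it $K = K(d)$. (One should check integrability near $|u| = 1$: there $\varphi'(s) = (1-s)^{-2}$ blows up polynomially while $\eta(u) = k e^{-1/(1-|u|^2)}$ vanishes faster than any power, so the product is bounded and the integral converges; near $u = 0$ everything is smooth.)

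Next I would carry out the scaling. From $\eta_\eps(u) = \eps^{-d}\eta(u/\eps)$ we get $\nabla \eta_\eps(u) = \eps^{-d-1}(\nabla\eta)(u/\eps)$, so
\[
 \frac{|\nabla\eta_\eps(u)|^2}{\eta_\eps(u)}
 = \frac{\eps^{-2d-2} |(\nabla\eta)(u/\eps)|^2}{\eps^{-d}\,\eta(u/\eps)}
 = \eps^{-d-2}\,\frac{|(\nabla\eta)(u/\eps)|^2}{\eta(u/\eps)}.
\]
Now substitute $v = u/\eps$, $du = \eps^d\, dv$, which turns $\int_{B(0,\eps)}$ into $\int_{B(0,1)}$ and produces
\[
 \int_{B(0,\eps)} \frac{|\nabla\eta_\eps(u)|^2}{\eta_\eps(u)}\,du
 = \eps^{-d-2}\,\eps^{d} \int_{B(0,1)} \frac{|(\nabla\eta)(v)|^2}{\eta(v)}\,dv
 = \frac{K}{\eps^2},
\]
which is exactly the asserted formula, with the same constant $K = K(d) = \int_{B(0,1)} |\nabla\eta|^2/\eta\,dv$ identified above. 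This is a clean change of variables; there is essentially nothing delicate here once the scaling exponents are tracked correctly.

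The only genuine point to be careful about — and the one I would flag as the ``main obstacle,'' though it is minor — is the finiteness and well-definedness of $K$, i.e.\ justifying that $|\nabla\eta|^2/\eta$ is integrable on $B(0,1)$ despite $\eta$ vanishing on the boundary sphere. The resolution, as noted, is the quantitative comparison of the algebraic singularity of $(\varphi')^2 = (1-|u|^2)^{-4}$ against the exponential decay of $\eta$: for $|u| \to 1^-$ the quantity $4|u|^2 (1-|u|^2)^{-4} k\, e^{-1/(1-|u|^2)} \to 0$, so the integrand extends continuously by $0$ to $\overline{B(0,1)}$ and is thus bounded, hence integrable on the bounded set $B(0,1)$. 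Strict positivity of $K$ is immediate since the integrand is positive on a set of positive measure. With that in hand the statement follows; one may even record the explicit value $K = \int_{B(0,1)} 4|u|^2 (1-|u|^2)^{-4} \eta(u)\,du$, but this is not needed for the applications in Proposition \ref{new-weak}.
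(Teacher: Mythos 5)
Your proof is correct and follows essentially the same route as the paper: compute $\nabla\eta$ explicitly, rescale via $v=u/\eps$ to peel off the factor $\eps^{-2}$, and identify $K$ as the resulting integral over $B(0,1)$. The only difference is that you explicitly verify the finiteness of $K$ near $|u|=1$ (the exponential decay of $\eta$ beating the polynomial blow-up of $(1-|u|^2)^{-4}$), a point the paper leaves implicit.
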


\begin{proof}
 Point out that
 \[
  \nabla \eta (u) = \left \{ \begin{array}{ll} \frac{-2ku}{(1-\abs{u}^2)^2} e^{-\frac{1}{1-\abs{u}^2}} & \abs{u} < 1 \\ 0 & \abs{u} \geq 1 \end{array} \right.
 \]
 and
 \[
   \nabla \eta_\eps (u) = \frac{1}{\eps^{d+1}} \nabla \eta \left ( \frac{u}{\eps} \right ).
 \]
 
 Now
 \begin{align*}
  \int_{B(0,\eps)} \frac{\abs{\nabla \eta_\eps(u)}^2}{\eta_\eps(u)} du &= \int_{B(0,\eps)} \frac{1}{\eps^{d+2}} \frac{\abs{\nabla \eta(u/\eps)}^2}{\eta(u/\eps)} du = \frac{1}{\eps^2} \int_{B(0,1)} \frac{\abs{\nabla \eta(v)}^2}{\eta(v)} dv \\
  {} &= \frac{1}{\eps^2} \int_{B(0,1)} \frac{4k\abs{v}^2}{(1-\abs{v}^2)^4} e^{-\frac{1}{1-\abs{v}^2}} dv = \frac{K(d)}{\eps^2}.
 \end{align*}

\end{proof}

Now we define the function
\[
 H_\eps(Y-X) = \prod_{i=1}^{N} \eta_\eps(y_i-x_i).
\]
and use it as mollifier to regularize the transport plan $P$ defining
\[
 \tilde{P}_\eps(Y) = \int H_\eps(Y-X) dP(X).
\]

Note that the marginals of $\tilde{P}_\eps$\footnote{As usual we mean the marginals of $\tilde{P}_\eps(Y)dY$.} are different from $\rho$, but may be written explicitly:

\begin{align*}
 \rho_\eps(y) &= \int \limits_{\R^{(N-1)d}} \tilde{P}_\eps(y,y_2, \dotsc, y_N) dy_2 \dots dy_N 
 \\ &= \int \limits_{\R^{(N-1)d}} \int \eta_\eps(y-x_1) \eta_\eps(y_2-x_2) \dotsm \eta_\eps(y_N-x_N) dP(X) dy_2 \dots dy_N
 \\ &= \int \eta_\eps(y-x_1) dP(X) = \int \limits_{\R^d} \rho(x) \eta_\eps(y-x) dx = (\rho * \eta_\eps) (y)
\end{align*}

\begin{lmm} \label{P-eps-diagonal}
 Let $\alpha$ be as in the statement of Proposition \ref{new-weak} and let $Y \in \rnd$ be such that  $\abs{y_i - y_j} < \alpha/2$ for some $i \neq j$, 
 and $\eps < \alpha/4$, then $\tilde{P}_\eps(Y) = 0$.
\end{lmm}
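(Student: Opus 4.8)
The plan is to unwind the definitions and reduce the claim to the fact that the mollifier $\eta_\eps$ is supported in the ball of radius $\eps$. Recall that $\tilde P_\eps(Y) = \int H_\eps(Y-X)\,dP(X)$ with $H_\eps(Y-X) = \prod_{i=1}^N \eta_\eps(y_i - x_i)$, and that $\eta_\eps$ vanishes outside $B(0,\eps)$. Hence the integrand $H_\eps(Y-X)$ is nonzero only for those $X$ with $\abs{y_i - x_i} < \eps$ for \emph{every} $i = 1, \dots, N$; in particular it is supported, as a function of $X$, on the set $\prod_{i=1}^N B(y_i, \eps)$.

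Now fix $Y$ with $\abs{y_i - y_j} < \alpha/2$ for some pair $i \neq j$, and assume $\eps < \alpha/4$. For any $X$ in the support of $X \mapsto H_\eps(Y-X)$ we have $\abs{x_i - y_i} < \eps$ and $\abs{x_j - y_j} < \eps$, so by the triangle inequality
\[
 \abs{x_i - x_j} \leq \abs{x_i - y_i} + \abs{y_i - y_j} + \abs{y_j - x_j} < \eps + \frac{\alpha}{2} + \eps < \frac{\alpha}{4} + \frac{\alpha}{2} + \frac{\alpha}{4} = \alpha.
\]
This means $X \in D_\alpha$. Therefore the integrand $H_\eps(Y-X)$, viewed as a function of $X$, is supported inside $D_\alpha$, and since by hypothesis \eqref{P-diag} we have $P|_{D_\alpha} = 0$, the integral $\int H_\eps(Y-X)\,dP(X)$ vanishes, i.e. $\tilde P_\eps(Y) = 0$.

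There is essentially no obstacle here: the statement is a direct consequence of the finite-speed-of-support property of convolution combined with the off-diagonal assumption on $P$, and the only arithmetic is the triangle-inequality bound $\eps + \alpha/2 + \eps < \alpha$, which holds precisely because $\eps < \alpha/4$. The one point worth stating carefully is that the support condition must hold for \emph{all} $N$ coordinates simultaneously (because $H_\eps$ is a product), so in particular for the two indices $i,j$ realizing $\abs{y_i - y_j} < \alpha/2$; this is what lets us conclude $X \in D_\alpha$ and invoke $P(D_\alpha) = 0$. It may also be worth noting that this lemma is exactly what will be used later to show that the regularized plans $\tilde P_\eps$ (and their renormalized versions) stay off-diagonal, hence have finite Coulomb cost and controlled kinetic energy, which feeds into parts (3) and (4) of Proposition \ref{new-weak}.
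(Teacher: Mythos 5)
Your proof is correct and follows essentially the same route as the paper's: write $\tilde P_\eps(Y)$ as an integral against $P$, note that the mollifier factor $H_\eps(Y-X)$ is supported where $\abs{y_k-x_k}<\eps$ for every $k$, and use the triangle inequality on the indices $i,j$ to conclude $\abs{x_i-x_j}<\alpha$, so the integrand is supported in $D_\alpha$ where $P$ vanishes. If anything, your version is slightly more precise, since you keep the strict inequality $\abs{x_i-x_j}<\alpha$, which is exactly what is needed because $D_\alpha$ is defined with a strict inequality.
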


\begin{proof}
 Note that
 \[
  \tilde{P}_\eps(Y) = \int H_\eps(Y-X) \prod_{j=1}^{N} \chi_{B(y_j,\eps)}(x_j) dP(X).
 \]
 
 If $Y$ and $\eps$ are as in the statement, and $x_i \in B(y_i,\eps)$, $x_j \in B(y_j, \eps)$, then
 \[
  \abs{x_i-x_j} \leq \abs{x_i-y_i} + \abs{y_i-y_j} + \abs{y_j-x_j} \leq \alpha.
 \]
 
 The thesis follows from \eqref{P-diag}.
 
\end{proof}

Define now 
\[
 \tilde{\varphi}_\eps(Y) = \sqrt{\tilde{P}_\eps(Y)}.
\]

\begin{lmm}
 For every $\eps > 0$, $\tilde{\varphi}_\eps \in H^1(\rnd)$.
\end{lmm}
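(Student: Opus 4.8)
The plan is to check the two membership requirements $\tilde\varphi_\eps\in L^2(\rnd)$ and $\nabla\tilde\varphi_\eps\in L^2(\rnd)$ in turn. The first is immediate and costs nothing: $\int\tilde\varphi_\eps^2\,dY=\int\tilde P_\eps(Y)\,dY=1$ since $\tilde P_\eps$ is a probability density, so the whole content of the lemma is a bound on $\int\abs{\nabla\tilde\varphi_\eps}^2$.

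First I would record that $\tilde P_\eps=P*H_\eps$ is of class $C^\infty$, nonnegative, and — differentiating under the integral sign, which is legitimate because $H_\eps$ is smooth with compact support and $P$ is finite — that
\[
 \nabla_{y_k}\tilde P_\eps(Y)=\int \nabla\eta_\eps(y_k-x_k)\prod_{i\neq k}\eta_\eps(y_i-x_i)\,dP(X),\qquad k=1,\dots,N;
\]
moreover $\nabla\tilde P_\eps=0$ on $\{\tilde P_\eps=0\}$, since a nonnegative $C^1$ function has vanishing gradient at its zeros. The heart of the argument is an entropy-type pointwise estimate coming from Cauchy--Schwarz in the variable $X$ against $dP$: for fixed $Y$, with $h(X)=H_\eps(Y-X)$ and $g(X)=\nabla_{y_k}H_\eps(Y-X)$ one has $\{h=0\}\subseteq\{g=0\}$ — because each mollifier factor is flat at the boundary of its support, so $\eta_\eps(u)=0$ forces $\nabla\eta_\eps(u)=0$, and if a factor with index $\ne k$ vanishes then $g$ vanishes as well — hence $\abs{g}^2\le(\abs{g}^2/h)\,h$ pointwise and
\[
 \abs{\nabla_{y_k}\tilde P_\eps(Y)}^2\le \tilde P_\eps(Y)\int \frac{\abs{\nabla\eta_\eps(y_k-x_k)}^2}{\eta_\eps(y_k-x_k)}\prod_{i\neq k}\eta_\eps(y_i-x_i)\,dP(X).
\]
Dividing by $\tilde P_\eps$ on $\{\tilde P_\eps>0\}$ and integrating in $Y$ by Tonelli, the inner $Y$-integral factorises: $\int_{\R^d}\frac{\abs{\nabla\eta_\eps(y_k-x_k)}^2}{\eta_\eps(y_k-x_k)}\,dy_k=K/\eps^2$ by Lemma~\ref{mollifier-estimate}, $\int_{\R^d}\eta_\eps(y_i-x_i)\,dy_i=1$ for $i\ne k$, and $\int dP=1$, so $\int_{\rnd}\frac{\abs{\nabla_{y_k}\tilde P_\eps}^2}{\tilde P_\eps}\,dY\le K/\eps^2$.

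It remains to justify the chain rule $\nabla\tilde\varphi_\eps=\nabla\tilde P_\eps/(2\sqrt{\tilde P_\eps})$ (read as $0$ on $\{\tilde P_\eps=0\}$), which is the only point requiring care. I would approximate $\tilde\varphi_\eps$ by the smooth functions $g_\delta:=\sqrt{\tilde P_\eps+\delta^2}-\delta$: their gradients $\nabla g_\delta=\nabla\tilde P_\eps/(2\sqrt{\tilde P_\eps+\delta^2})$ obey $\abs{\nabla g_\delta}^2\le\abs{\nabla\tilde P_\eps}^2/(4\tilde P_\eps)$ on $\{\tilde P_\eps>0\}$ and vanish elsewhere, so they are dominated by an $L^1$ function by the previous step; as $\delta\to0$ one gets $g_\delta\to\tilde\varphi_\eps$ and $\nabla g_\delta\to\nabla\tilde P_\eps/(2\sqrt{\tilde P_\eps})\,\chi_{\{\tilde P_\eps>0\}}$ in $L^2$ by dominated convergence, and closedness of the distributional gradient identifies the limit as $\nabla\tilde\varphi_\eps$. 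Summing over $k$ then gives $\int_{\rnd}\abs{\nabla\tilde\varphi_\eps}^2\,dY=\frac14\sum_{k=1}^N\int\frac{\abs{\nabla_{y_k}\tilde P_\eps}^2}{\tilde P_\eps}\,dY\le\frac{NK}{4\eps^2}<+\infty$, whence $\tilde\varphi_\eps\in H^1(\rnd)$; this also delivers the quantitative bound needed in Proposition~\ref{new-weak}(4). The only genuine subtlety, as anticipated, is the bookkeeping on the zero set of $\tilde P_\eps$ in the Cauchy--Schwarz step and in the chain rule; the rest is direct computation.
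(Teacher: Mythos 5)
Your proof takes essentially the same route as the paper's: the Cauchy--Schwarz inequality in the $X$-variable against $dP$ to bound $\abs{\nabla\tilde P_\eps}^2/\tilde P_\eps$ by $\int\abs{\nabla H_\eps}^2/H_\eps\,dP$, followed by Tonelli and the mollifier estimate of Lemma~\ref{mollifier-estimate} to produce the bound $KN/(4\eps^2)$. The only difference is that you spell out two points the paper treats implicitly -- the zero-set inclusion needed to make the $0/0$ convention in the Cauchy--Schwarz step legitimate, and the chain rule for $\sqrt{\tilde P_\eps}$ via the $\sqrt{\tilde P_\eps+\delta^2}-\delta$ approximation -- both of which are correct and make the argument more self-contained.
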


\begin{proof}
 Fix $\eps > 0$. Clearly $\tilde{\varphi}_\eps$ is $L^2$, since
 \begin{align*}
  \int \tilde{\varphi}_\eps^2(Y) dY &= \int \tilde{P}_\eps(Y) dY = \iint H_\eps(Y-X) dP(X) dY \\
  {} &= \iint H_\eps(Y-X) dY dP(X) = \int dP(X) = 1.
 \end{align*}
 
 Now we estimate $|\nabla \tilde{\varphi}_\eps|^2$. Using the Cauchy-Schwartz inequality,
 \begin{align*}
  \abs{\nabla \tilde{P}_\eps(Y)} &\leq \int \abs{\nabla H_\eps(Y-X)} dP(X) \\
  {} &\leq \sqrt{\int\frac{\abs{\nabla H_\eps(Y-X)}^2}{H_\eps(Y-X)} dP(X)} \sqrt{ \int H_\eps(Y-X) dP(X)} \\
  {} &= \sqrt{\int\frac{\abs{\nabla H_\eps(Y-X)}^2}{H_\eps(Y-X)} dP(X)} \sqrt{\tilde{P}_\eps(Y)}
 \end{align*}
 where the first integral is extended to the set where the integrand is defined, namely $\abs{x_j-y_j} < \eps \  \forall j$. Therefore, with the same convention,
 \begin{align*}
  \int \abs{\nabla \tilde{\varphi}_\eps(Y)}^2 dY &= \frac{1}{4} \int \frac{ |\nabla \tilde{P}_\eps(Y)|^2}{\tilde{P}_\eps(Y)} dY \leq \frac{1}{4} \iint \frac{\abs{\nabla H_\eps(Y-X)}^2}{H_\eps(Y-X)} dP(X) dY \\
  {} &= \frac{1}{4} \sum_{j=1}^N \iint \frac{\abs{\nabla_j H_\eps(Y-X)}^2}{H_\eps(Y-X)} dY dP(X) \\
  {} &\leq \frac{1}{4} \sum_{j=1}^N \iint \frac{\abs{\nabla \eta_\eps(y_j-x_j)}^2}{\eta_\eps(y_j-x_j)} dy_j dP(X) \\
  {} &= \frac{N}{4} \int \frac{\abs{\nabla \eta_\eps(u)}^2}{\eta_\eps(u)} du = \frac{KN}{4\eps^2}.
 \end{align*}
\end{proof}

In the next step we introduce a natural tecnique to get back the original marginals $\rho$ without losing too much regularity. 
This technique is original and different from the one presented in \cite{cotar2013density}. We point out that, in a 
different context, this construction may well be generalized to a plan with different marginals $\rho^1, \dotsc, \rho^N$.

The construction fits in the general scheme of composition of transport plans as presented in \cite{ambrosio2005}.

For $x, y \in \R^d$ define
\[
 \gamma_\eps(x,y) := \frac{\rho(x)\eta_\eps(y-x)}{\rho_\eps(y)}
\]
with the convention that it is zero if $\rho_\eps(y) = 0$. The two variables function $\rho(x)\eta_\eps(y-x)$ is the key point of the construction, since it has the following property that links the different marginals:
\[
 \int_{\R^d} \rho(x)\eta_\eps(y-x) dx = \rho_\eps(y), \  \int_{\R^d} \rho(x)\eta_\eps(y-x) dy = \rho(x).
\]

To simplify the notation we set
\[
 m_\eps(X, Y) = \prod_{i=1}^{N} \gamma_\eps (x_i, y_i),
\]
\[
 m_\eps^{j}(X, Y) = \prod_{\substack{i = 1 \\ i \neq j}}^{N} \gamma_\eps (x_i, y_i)
\]

Note that
\[
 \int\limits_{\R^d} \gamma_\eps(x,y) dx = \chi_{\rho_\eps > 0}(y).
\]

Now set
\[
 \Gamma_\eps(X,Y) := \tilde{\varphi}_\eps^2(Y) m_\eps(X, Y).
\]
and observe that
\[
 \int \Gamma_\eps(X,Y) dX = \tilde{\varphi}_\eps^2(Y) \prod_{i = 1}^{N} \chi_{\rho_\eps > 0}(y_i) = \tilde{\varphi}_\eps^2(Y) = \tilde{P}_\eps(Y)
\]
where we used the following remark, which will be implicit from now on:

\begin{rmk}
 If $Y$ is such that $\rho_\eps(y_i) = 0$, then
 \[
  0 = \rho_\eps(y_i) = \int_{\R^{(N-1)d}} \tilde{\varphi}_\eps(y_1, y_2, \dotsc, y_n)^2 dy_1 \dots \hat{dy_i} \dots dy_N
 \]
 and hence $\tilde{\varphi}_\eps(Y) = 0$.
\end{rmk}

Define
\[
 P_\eps(X) = \int \Gamma_\eps(X,Y) dY
\]
and calculate the marginals of $P_\eps$ to get
\begin{align*}
 \int\limits_{\R^{(N-1)d}} P_\eps(X) dx_2\dots dx_N &= \int\limits_{\R^{(N-1)d}} \int \Gamma_\eps(X,Y) dY dx_2\dots dx_N
 \\ &= \int \tilde{\varphi}_\eps^2(Y)\gamma_\eps(x_1,y_1) dY
 \\ &= \int\limits_{\R^d} \rho_\eps(y_1) \gamma_\eps(x_1, y_1) dy_1
 \\ &= \rho(x_1) 
\end{align*}
and similarly also the other $N-1$ marginals are equal to $\rho$.

\begin{lmm} \label{tilde-P-eps-diagonal}
 Let $\alpha$ be as in the statement of Proposition \ref{P-diag}. If $\abs{x_i-x_j} < \alpha/4$ for some $i \neq j$, and $\eps < \alpha/8$, then $P_\eps(X) = 0$.
\end{lmm}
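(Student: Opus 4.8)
The plan is to unwind the definition of $P_\eps$ and push the diagonal-vanishing property of $\tilde P_\eps$ (Lemma \ref{P-eps-diagonal}) through the composition. Recall that
\[
 P_\eps(X) = \int \Gamma_\eps(X,Y)\, dY = \int \tilde\varphi_\eps^2(Y)\, m_\eps(X,Y)\, dY = \int \tilde P_\eps(Y) \prod_{k=1}^{N} \gamma_\eps(x_k,y_k)\, dY.
\]
The first observation is that each factor $\gamma_\eps(x_k,y_k) = \rho(x_k)\eta_\eps(y_k-x_k)/\rho_\eps(y_k)$ vanishes unless $y_k \in B(x_k,\eps)$, because $\spt \eta_\eps \subseteq B(0,\eps)$. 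Hence the integrand above is supported on the set of $Y$ with $\abs{y_k-x_k} < \eps$ for all $k$; in particular $\abs{y_i-x_i} < \eps$ and $\abs{y_j-x_j} < \eps$.

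Next I would run the triangle inequality. If $X$ satisfies $\abs{x_i-x_j} < \alpha/4$ and $\eps < \alpha/8$, then for every $Y$ in the support of the integrand
\[
 \abs{y_i-y_j} \leq \abs{y_i-x_i} + \abs{x_i-x_j} + \abs{x_j-y_j} < \eps + \frac{\alpha}{4} + \eps < \frac{\alpha}{4} + \frac{\alpha}{4} = \frac{\alpha}{2}.
\]
So every such $Y$ has two coordinates closer than $\alpha/2$. Since also $\eps < \alpha/8 < \alpha/4$, Lemma \ref{P-eps-diagonal} applies and gives $\tilde P_\eps(Y) = 0$. Therefore the integrand $\tilde P_\eps(Y)\prod_k \gamma_\eps(x_k,y_k)$ vanishes for all $Y$, and $P_\eps(X) = 0$, which is the thesis.

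I do not expect a genuine obstacle here: the argument is a chain of triangle inequalities together with a bookkeeping of supports. The only mild point of care is the convention in the definition of $\gamma_\eps$ when $\rho_\eps(y_k)=0$ — but on that set $\gamma_\eps$ is defined to be $0$, so it does not affect the support inclusion $\{\gamma_\eps(x_k,\cdot)\neq 0\}\subseteq B(x_k,\eps)$, and the proof goes through verbatim.
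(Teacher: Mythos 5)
Your proof is correct and is essentially the paper's own argument: both unwind $P_\eps$ through the kernel $m_\eps$, observe that the factors $\gamma_\eps(x_k,y_k)$ force $\abs{y_k-x_k}<\eps$ on the support of the integrand, apply the triangle inequality to place $Y$ within the strip $\abs{y_i-y_j}<\alpha/2$, and then invoke Lemma \ref{P-eps-diagonal}. Your write-out of the strict inequality $\abs{y_i-y_j}<\alpha/2$ is in fact slightly cleaner than the paper's $\leq\alpha/2$, but the reasoning is the same.
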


\begin{proof}
 Fix $X$ and $\eps$ as in the statement, and suppose $m_\eps(X,Y) > 0$. Then necessarily $\abs{y_i-x_i} < \eps$ and $\abs{y_j-x_j} < \eps$, so that
 \[
  \abs{y_i-y_j} \leq \abs{y_i-x_i} + \abs{x_i-x_j} + \abs{x_j-y_j} \leq \alpha/2.
 \]
 
 From Lemma \ref{P-eps-diagonal} it follows that $\tilde{\varphi}_\eps^2(Y) = 0$.

\end{proof}

We define now the function $\varphi_{\eps}(X) = \sqrt{P_\eps(X)}$, and proceed to estimate its kinetic energy.

\paragraph{Estimate for the kinetic energy} Calculate first the gradient with respect to $x_j$ of $P_\eps$:

\begin{IEEEeqnarray*}{rCl}
 \nabla_j P_\eps(X) &= &\nabla_j \int \tilde{\varphi}_\eps^2(Y) m_\eps^j(X, Y) \frac{\rho(x_j)\eta_\eps(y_j - x_j)}{\rho_\eps(y_j)} dY
 \\ & {} = &\int \tilde{\varphi}_\eps^2(Y) m_\eps^j(X, Y) \frac{\nabla \rho(x_j)\eta_\eps(y_j - x_j)}{\rho_\eps(y_j)} dY 
 \\ && {} - \int \tilde{\varphi}_\eps^2(Y) m_\eps^j(X, Y) \frac{\rho(x_j) \nabla \eta_\eps(y_j - x_j)}{\rho_\eps(y_j)} dY
 \\ & {} = &A(X) + B(X).
\end{IEEEeqnarray*}

We define for simplicity
\[
 J(X) = \int \tilde{\varphi}_\eps^2(Y) m_\eps^j(X, Y) \frac{\eta_\eps(y_j - x_j)}{\rho_\eps(y_j)} dY
\]
so that, for example,
\[
 P_\eps(X) = \rho(x_j) J(X),
\]
\[
 A(X) = \nabla \rho(x_j) J(X).
\]

Now
\[
 \nabla_j \varphi_\eps(X) = \nabla_j \sqrt{P_\eps}(X) = \frac{\nabla_j P_\eps(X)}{2 \sqrt{P_\eps(X)}} = \frac{1}{2 \sqrt{P_\eps(X)}} [A(X) + B(X)]
\]
and we estimate the square of the  $L^2$ norm of every term.

\begin{enumerate}
 \item 
  \[
   \int \frac{\abs{A(X)}^2}{P_\eps(X)} dX = \int\limits_{\R^d} \frac{\abs{\nabla \rho(x_j)}^2}{\rho(x_j)} dx_j \leq 4 \norm{\sqrt{\rho}}_{H^1}^2.
  \]
 
 \item By Cauchy-Schwartz inequality,
 \[
  \abs{B(X)}^2 \leq \rho(x_j)J(X) \cdot \int \tilde{\varphi}_\eps(Y)^2 m_\eps^j(X, Y) \frac{\rho(x_j) \abs{\nabla \eta_\eps(y_j - x_j)}^2}{\rho_\eps(y_j) \eta_\eps(y_j-x_j)} dY.
 \]
 (Here the integral is extended to the region where $\eta_\eps(y_j-x_j) > 0$.) Therefore, with the same convention,
 \begin{align*}
  \int \frac{\abs{B(X)}^2}{P_\eps(X)} dX &\leq \iint \tilde{\varphi}_\eps(Y)^2 m_\eps^j(X, Y) \frac{\rho(x_j) \abs{\nabla \eta_\eps(y_j - x_j)}^2}{\rho_\eps(y_j) \eta_\eps(y_j-x_j)} dY dX
  \\ &= \int\limits_{\R^d} \int \tilde{\varphi}_\eps(Y)^2 \frac{\rho(x_j) \abs{\nabla \eta_\eps(y_j - x_j)}^2}{\rho_\eps(y_j) \eta_\eps(y_j-x_j)} dY dx_j
  \\ &= \int\limits_{\R^d} \int\limits_{\R^d} \frac{\rho(x_j) \abs{\nabla \eta_\eps(y_j - x_j)}^2}{\eta_\eps(y_j-x_j)} dy_j dx_j = \int\limits_{\R^d} \frac{\abs{\nabla \eta_\eps(y)}^2}{\eta_\eps(y)} dy = \frac{K}{\eps^2}.
 \end{align*}
\end{enumerate}

Moreover,
\begin{align*}
 \int \frac{A(X)\cdot B(X)}{4 P_\eps(X)} dX &= -\frac{1}{4} \iint \frac{\nabla \rho(x_j) \cdot \nabla \eta_\eps(x_j-y_j)}{\rho_\eps(y_j)} \tilde{\varphi}_\eps^2(Y) m_\eps^j(X,Y) dY dX \\
 & {} = -\frac{1}{4} \iint_{\R^d \times \R^d} \nabla \rho(x_j) \cdot \nabla \eta_\eps(x_j-y_j) dx_j dy_j \\
 & {} = -\frac{1}{4} \left( \int_{\R^d} \nabla \rho(x) dx \right) \cdot \left( \int_{\R^d} \nabla \eta_\eps(z) dz \right)
\end{align*}
and the second factor is equal to 0, as is easy to see integrating in spherical coordinates.

Thus
\[
 \int \abs{\nabla_j \varphi_\eps(X)}^2 dX \leq \frac{1}{4} \int \frac{\abs{A(X)}^2 + \abs{B(X)}^2}{P_\eps(X)} dX \leq \norm{\sqrt{\rho}}^2_{H^1} + \frac{K}{4\eps^2},
\]
and by summation over $j$
\[
 \int \abs{\nabla \varphi_\eps(X)}^2 dX \leq N \left( \norm{\sqrt{\rho}}^2_{H^1} + \frac{K}{4\eps^2} \right).
\]

Here we prove (ii). First we give the following lemma, which specifies that, for $\eps \to 0$, the mass of $\tilde{P}_\eps$ is concentrated near the mass of $P$.

\begin{lmm} \label{distant-mass} Suppose $R, \delta > 0$ are such that
 \[
  \int \limits_{\abs{X} > R} dP(X) \leq \delta;
 \]
 then, if $\eps\sqrt{N} < R$,
 \[
  \int \limits_{\abs{Y} > 2R} \tilde{P}_\eps(Y) dY \leq \delta.
 \]
\end{lmm}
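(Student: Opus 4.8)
The statement to prove is Lemma~\ref{distant-mass}, which compares the tail mass of the mollified plan $\tilde P_\eps$ with that of the original plan $P$.

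\medskip

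The plan is to exploit directly the definition $\tilde P_\eps(Y) = \int H_\eps(Y-X)\,dP(X)$ and the fact that each factor $\eta_\eps(y_i-x_i)$ in $H_\eps(Y-X)$ is supported in $\abs{y_i-x_i} < \eps$. First I would observe that, by Fubini (as already used in the proof that $\tilde\varphi_\eps$ is $L^2$),
\[
 \int_{\abs{Y} > 2R} \tilde P_\eps(Y)\,dY = \iint \chi_{\{\abs{Y} > 2R\}}(Y)\, H_\eps(Y-X)\,dY\,dP(X) = \int \left( \int_{\abs{Y}>2R} H_\eps(Y-X)\,dY \right) dP(X).
\]
The inner integral is bounded by $\int_{\rnd} H_\eps(Z)\,dZ = 1$ whenever the region $\{\abs{Y}>2R\}$ intersects the support of $H_\eps(\,\cdot\,-X)$, which (since $\spt H_\eps(\cdot - X) \subseteq B(x_1,\eps)\times\dots\times B(x_N,\eps)$ and $\eps\sqrt N < R$) forces $\abs{X} > 2R - \eps\sqrt N > R$. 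Hence the inner integral vanishes whenever $\abs{X}\le R$, so
\[
 \int_{\abs{Y}>2R} \tilde P_\eps(Y)\,dY \le \int_{\abs{X}>R} dP(X) \le \delta.
\]

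\medskip

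The one point that needs a little care is the geometric estimate: if $Y$ lies in $\spt H_\eps(\cdot - X)$ then $\abs{y_i - x_i} < \eps$ for every $i$, so $\abs{Y-X}^2 = \sum_i \abs{y_i-x_i}^2 < N\eps^2$, i.e.\ $\abs{Y-X} < \eps\sqrt N$; therefore $\abs{Y}>2R$ together with $\abs{X}\le R$ would give $\abs{Y-X} \ge \abs{Y}-\abs{X} > R \ge \eps\sqrt N$, a contradiction. This is elementary and I do not expect it to be an obstacle; indeed there is essentially no hard step here — the lemma is a straightforward consequence of the support properties of the mollifier and Fubini's theorem, and the only thing to get right is the bookkeeping of the factor $\sqrt N$ coming from the $N$-fold product structure of $H_\eps$.
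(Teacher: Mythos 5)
Your proof is correct and follows essentially the same path as the paper's: Fubini, then the observation that on the support of $H_\eps(\cdot-X)$ one has $\abs{Y-X}<\eps\sqrt N$, so $\abs{Y}>2R$ forces $\abs{X}>R$ once $\eps\sqrt N<R$. Your write-up is if anything a touch more explicit about the geometric step (the paper states $\abs{Y-X}\ge R$ somewhat tersely, implicitly using $\abs{Y}>2R$), but there is no difference in substance.
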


\begin{proof}
 \begin{align*}
  \int \limits_{Y > 2R} \tilde{P}_\eps(Y) dY &= \iint \limits_{\abs{Y} > 2R} H_\eps(Y-X) dP(X) dY
  \\ &= \iint \limits_{\substack{\abs{Y} > 2R} \cap \abs{X} > R} H_\eps(Y-X) dP(X) dY
  \\ &\leq  \iint \limits_{\abs{X} > R} H_\eps(Y-X) dY dP(X) = \int \limits_{\abs{X} > R} dP(X) \leq \delta 
 \end{align*}
 since, where $\abs{X} \leq R$, one has $\abs{Y-X} \geq R > \eps\sqrt{N}$, and hence there exists $i$ such that $\abs{x_i-y_i} > \eps$.
\end{proof}

Next, to prove that $P_\eps \wconv P$, we interpolate $\tilde{P}_\eps$ in between:

\begin{lmm} \label{weak-convergence-P} $\tilde{P}_\eps \wconv P$.
 
\end{lmm}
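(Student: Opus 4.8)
The plan is to establish the narrow (tight) convergence $\tilde{P}_\eps \wconv P$ directly from the definition, by testing against an arbitrary $\phi \in C_b(\rnd)$ and showing $\int \phi(Y)\,\tilde{P}_\eps(Y)\,dY \to \int \phi(X)\,dP(X)$; since every measure in sight is a probability, this is exactly the convergence we want, and it will feed into the subsequent step where $P_\eps$ is compared with $\tilde{P}_\eps$.

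First I would unfold the definition $\tilde{P}_\eps(Y) = \int H_\eps(Y-X)\,dP(X)$ and apply Fubini--Tonelli. This is legitimate because $\phi$ is bounded and the measure $H_\eps(Y-X)\,dY\,dP(X)$ on $\rnd \times \rnd$ is finite (its total mass is $\iint H_\eps(Y-X)\,dY\,dP(X) = \int 1\,dP(X) = 1$). Using that $H_\eps(U) = \prod_{i=1}^N \eta_\eps(u_i)$ is even, each $\eta_\eps$ being radial, one gets
\[
 \int \phi(Y)\,\tilde{P}_\eps(Y)\,dY = \iint \phi(Y) H_\eps(Y-X)\,dY\,dP(X) = \int (\phi * H_\eps)(X)\,dP(X).
\]

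Next, $\{H_\eps\}_{\eps}$ is an approximate identity on $\rnd$: it is nonnegative, has unit integral, and is supported in $B(0,\eps\sqrt{N})$. Hence $(\phi * H_\eps)(X) \to \phi(X)$ for every $X$ as $\eps \to 0$ (by continuity of $\phi$, uniformly on compacta), while $\abs{\phi * H_\eps} \leq \norm{\phi}_{L^\infty}$ pointwise. Since the constant $\norm{\phi}_{L^\infty}$ is $P$-integrable ($P$ being a probability measure), the dominated convergence theorem yields
\[
 \int (\phi * H_\eps)(X)\,dP(X) \xrightarrow{\eps \to 0} \int \phi(X)\,dP(X),
\]
which is the assertion. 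Alternatively, one could first test against $\phi \in C_c(\rnd)$ to obtain weak-$*$ convergence, and then upgrade to narrow convergence using the uniform tightness of $\{\tilde{P}_\eps\}$ provided by Lemma \ref{distant-mass}.

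I do not expect any genuine obstacle here: the statement is the classical fact that mollification of a probability measure converges to it narrowly. The only points deserving a word of justification are the Fubini exchange (covered by boundedness of $\phi$ and finiteness of the above product measure) and the evenness of $H_\eps$ needed to rewrite the inner integral as $\phi * H_\eps$; everything else is the routine approximate-identity argument.
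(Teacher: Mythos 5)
Your proof is correct, and it takes a mildly different route from the paper's. After the (shared) Fubini step, the paper estimates $\iint \abs{\phi(Y)-\phi(X)}H_\eps(Y-X)\,dP(X)\,dY$ by explicitly splitting $\rnd\times\rnd$ into a region of small $P$-mass (controlled by Lemma~\ref{distant-mass}), a compact region where uniform continuity of $\phi$ is used, and a region where the mollifier vanishes identically; in effect it reproves a version of the dominated-convergence step by hand, and this hands-on decomposition is then reused almost verbatim for $P_\eps\wconv P$. You instead rewrite the inner integral as $(\phi*H_\eps)(X)$ (correctly invoking the evenness of $H_\eps$), observe that $\phi*H_\eps\to\phi$ pointwise because $H_\eps$ is an approximate identity supported in $B(0,\eps\sqrt N)$, and conclude by dominated convergence with the constant dominant $\norm{\phi}_\infty$. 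Both arguments are sound; yours is shorter and cleaner for this particular lemma, while the paper's choice of an explicit three-region estimate earns its keep in the very next lemma, where the corresponding kernel $\Gamma_\eps(X,Y)$ is no longer a convolution in $X-Y$ and the approximate-identity/DCT shortcut does not apply directly. Your closing remark that one could first test on $C_c$ and then upgrade via tightness from Lemma~\ref{distant-mass} is also a valid alternative and is closer in spirit to what the paper does.
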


\begin{proof} Let $\phi \in C_b(\rnd)$;
\begin{align*}
 \abs{\int \phi(Y) d\tilde{P}_\eps(Y) - \int \phi(X) dP(X) } &= \abs{ \iint [\phi(Y) - \phi(X)] H_\eps(Y-X) dP(X)dY }
 \\ &\leq \iint \abs{\phi(Y) - \phi(X)} H_\eps(Y-X) dP(X)dY.
\end{align*}

Given $\delta > 0$, let $R$ be such that the hypotesis of Lemma \ref{distant-mass} holds. We divide $\rnd \times \rnd$ in three disjoint regions:
\[
 E_1 = \gra{\abs{X} > R} \ E_2 = \gra{\abs{X} \leq R, \abs{Y} \leq 2R} \ E_3 = \gra{\abs{X} \leq R, \abs{Y} > 2R}.
\]

As before, if $\eps\sqrt{N} < R$, on $E_3$ one has $H_\eps(X-Y) \equiv 0$.

\begin{align*}
 \iint \limits_{E_1} \abs{\phi(Y) - \phi(X)} H_\eps(Y-X) dP(X)dY &\leq 2\norm{\phi}_{\infty} \iint \limits_{E_1} H_\eps(Y-X) dP(X)dY
 \\ &\leq 2 \delta \norm{\phi}_{\infty}.
\end{align*}

On the other hand, $E_2$ is compact; take $\eps_0$ such that $\abs{X-Y} \leq \eps_0$ implies $\abs{\phi(X)-\phi(Y)} \leq \delta$. If $\eps\sqrt{N} \leq \eps_0$ we get
\begin{align*}
 \iint \limits_{E_2} \abs{\phi(Y) - \phi(X)} H_\eps(Y-X) dP(X)dY &\leq \delta \iint \limits_{E_2} H_\eps(Y-X) dP(X)dY
 \\ &\leq \delta \iint H_\eps(Y-X) dP(X)dY = \delta.
\end{align*}

\end{proof}

\begin{lmm} \label{weak-convergence-full} $P_\eps \wconv P$.
 \end{lmm}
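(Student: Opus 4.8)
The plan is to deduce the statement from Lemma \ref{weak-convergence-P}, which already gives $\tilde P_\eps \wconv P$, by showing that $P_\eps$ and $\tilde P_\eps$ become arbitrarily close as $\eps \to 0$. The key remark is that $\Gamma_\eps(X,Y)\,dX\,dY$ is a probability measure on $\rnd \times \rnd$ coupling $P_\eps$ and $\tilde P_\eps$: by definition $\int \Gamma_\eps(X,Y)\,dY = P_\eps(X)$, the identity $\int \Gamma_\eps(X,Y)\,dX = \tilde P_\eps(Y)$ has been established above, and integrating once more yields total mass $1$. Moreover this coupling is concentrated near the diagonal: $\gamma_\eps(x_i,y_i) \neq 0$ forces $\eta_\eps(y_i - x_i) \neq 0$, i.e. $\abs{y_i - x_i} < \eps$ for every $i$, so that $\abs{X-Y} < \eps\sqrt N$ on $\spt \Gamma_\eps$.

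Given this, I would fix $\phi \in C_b(\rnd)$ and estimate
\[
 \abs{\int \phi\, dP_\eps - \int \phi\, d\tilde P_\eps} = \abs{\iint [\phi(X) - \phi(Y)]\,\Gamma_\eps(X,Y)\,dX\,dY} \leq \iint \abs{\phi(X) - \phi(Y)}\,\Gamma_\eps(X,Y)\,dX\,dY.
\]
For $\delta > 0$ choose $R$ as in Lemma \ref{distant-mass}, so that $\int_{\abs{Y} > 2R} \tilde P_\eps(Y)\,dY \leq \delta$ whenever $\eps\sqrt N < R$, and split the domain according to $\abs{Y} > 2R$ or $\abs{Y} \leq 2R$. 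On the first part, integrating out $X$ and using $\int \Gamma_\eps(X,Y)\,dX = \tilde P_\eps(Y)$, the contribution is at most $2\norm{\phi}_\infty \int_{\abs{Y}>2R}\tilde P_\eps(Y)\,dY \leq 2\delta\norm{\phi}_\infty$. On the second part the support property forces $\abs{X} < 2R + \eps\sqrt N$, so $(X,Y)$ stays in a fixed compact set on which $\phi$ is uniformly continuous; hence $\abs{\phi(X)-\phi(Y)} \leq \delta$ there once $\eps\sqrt N$ is small enough, and this contribution is bounded by $\delta \iint \Gamma_\eps = \delta$. Letting $\eps \to 0$ and then $\delta \to 0$ shows $\int \phi\,dP_\eps - \int \phi\,d\tilde P_\eps \to 0$.

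Combining this with Lemma \ref{weak-convergence-P} gives $\int \phi\,dP_\eps \to \int \phi\,dP$ for every $\phi \in C_b(\rnd)$, that is $P_\eps \wconv P$. I do not expect a genuine obstacle here: the only delicate point is that the tail bound must be uniform in $\eps$, which is precisely the content of Lemma \ref{distant-mass}, while the near-diagonal localization of the coupling $\Gamma_\eps$ is immediate from the support of the mollifier $\eta_\eps$. (Equivalently, one could note that $\Gamma_\eps$ realizes $W_1(P_\eps,\tilde P_\eps) \leq \eps\sqrt N$ and invoke $\tilde P_\eps \wconv P$, but the direct test-function computation avoids any moment bookkeeping.)
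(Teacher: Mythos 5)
Your proof is correct and follows essentially the same route as the paper: reduce to Lemma \ref{weak-convergence-P} via the coupling $\Gamma_\eps$, use the near-diagonal support $\abs{X-Y}<\eps\sqrt N$ together with the tail bound of Lemma \ref{distant-mass}, and conclude by uniform continuity of $\phi$ on a compact set. The paper formally splits into three regions $E_1,E_2,E_3$ whereas you collapse $E_2\cup E_3$ into a single case by noting the support condition directly forces $\abs{X}<2R+\eps\sqrt N$ when $\abs{Y}\le 2R$, but this is a cosmetic streamlining of the same argument.
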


\begin{proof} Let $\phi \in C_b(\rnd)$. Using the fact that $\tilde{P}_\eps \wconv P$ (Lemma \ref{weak-convergence-P}), it is left to estimate
\begin{align*}
 \abs{\int \phi(X) P_\eps(X) dX - \int \phi(Y) \tilde{P}_\eps(Y) dY} &= \abs{\iint [\phi(X) - \phi(Y)] \Gamma_\eps(X,Y) dX dY}
 \\ & \leq \iint \abs{\phi(X) - \phi(Y)} \Gamma_\eps(X,Y) dX dY
\end{align*}

As in the proof of Lemma \ref{weak-convergence-P}, given $\delta>0$ let $R$ be such that the hypotesis of Lemma \ref{distant-mass} holds. We divide $\rnd \times \rnd$ in three disjoint regions:
\[
 E_1 = \gra{\abs{Y} > 2R} \  E_2 = \gra{\abs{Y} \leq 2R, \abs{X} \leq 3R} \  E_3 = \gra{\abs{Y} \leq 2R, \abs{X} > 3R}.
\]

If $\eps\sqrt{N} < R$, as before, on $E_3$ the integral is zero since $\Gamma_\eps(X, Y) \equiv 0$ there. Thanks to Lemma \ref{distant-mass},
\begin{align*}
 \iint \limits_{E_1} \abs{\phi(X) - \phi(Y)} \Gamma_\eps(X,Y) dX dY &\leq 2 \norm{\phi}_{\infty} \iint \limits_{E_1} \Gamma_\eps(X,Y) dX dY
 \\ &= 2\norm{\phi}_{\infty} \int \limits_{\gra{\abs{Y} > 2R}} \tilde{P}_\eps(Y) dY \leq 2 \delta \norm{\phi}_{\infty}
\end{align*}

Exactly as before, using that $E_2$ is compact and $\phi$ is absolutely continuous the thesis follows.

\end{proof}

It is left to prove (iii). The cost function $c$ is not continuous neither bounded. However, recall Lemma \ref{tilde-P-eps-diagonal}, it is bounded on the complement of $D_{\alpha/4}$. With this in mind, consider the function $v \colon \R^{2d} \to \R$ defined as
\[
 v(x,y) = \left \{ \begin{array}{ll} \displaystyle \frac{1}{\abs{x-y}} & \text{if } \abs{x-y} \geq \alpha/4 \\ \displaystyle 4/\alpha & \text{elsewhere}. \end{array} \right.
\]
and set
\[
 \overline{c}(X) = \sum_{1 \leq i < j \leq N} v(x_i,x_j).
\]

Clearly $\overline{c}(X) \leq c(X)$, and  $\overline{c}$ is continuous (sum of continuous functions), bounded by $\tbinom{N}{2}\tfrac{4}{\alpha}$; moreover, thanks to the property \eqref{P-diag} and Lemma \ref{tilde-P-eps-diagonal},
\[
 \int \overline{c}(X) dP(X) = \int c(X) dP(X), \  \int \overline{c}(X) P_\eps (X) dX = \int c(X) P_\eps (X) dX.
\]

We can conclude the estimate as follows:
\[
 \limsup_{\eps \to 0} C(P_\eps) = \limsup_{\eps \to 0} \int \overline{c}(X) P_\eps(X) dX = \int \overline{c}(X) dP(X) = \int c(X) dP(X).
\]
\qed

Proposition  \ref{new-weak} may be extended to all plans.
\begin{prop}\label{new-weak-full}  Let $P \in \Pi(\rho)$  (not necessarily satisfying the property \eqref{P-diag}). 
Then there exists a family of plans $\{P_\eps\} _{\eps > 0}$ such that:
\begin{enumerate}
 \item for every $\eps > 0$, $P_\eps \in \Pi(\rho)$ and is absolutely continuous with respect to the Lebesgue measure, with density given by $\varphi_\eps^2(X)$, where $\varphi_\eps$ is a suitable $H^1$ function;
 \item $P_\eps \wconv P$ as $\eps \to 0;$
 \item $\limsup_{\eps \to 0} \Co(P_\eps) \leq \Co(P);$
 \item the kinetic energy of $\varphi_\eps$ is explicitly controlled:
 \[
  \int \abs{\nabla \varphi_\eps(X)}^2 dX \leq N \left( \norm{\sqrt{\rho}}^2_{H^1} + \frac{K}{4\eps^2} \right)
 \]
 for a suitable constant $K>0$. 
\end{enumerate}
\end{prop}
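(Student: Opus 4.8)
The plan is to deduce the general statement from Proposition \ref{new-weak} by an approximation argument. The starting point is the observation that the off--diagonal hypothesis on $P$ is used in the proof of Proposition \ref{new-weak} \emph{only} to establish property $(3)$: the regularized plan $P_\eps$ is built by mollification with $H_\eps$ followed by the gluing with the kernels $\gamma_\eps$, and neither the weak convergence $P_\eps\wconv P$ (Lemmas \ref{distant-mass}--\ref{weak-convergence-full}) nor the kinetic estimate $\int\abs{\nabla\varphi_\eps}^2\le N(\norm{\sqrt\rho}_{H^1}^2+K/(4\eps^2))$ uses it --- they rely only on the support of the mollifiers and on $P\in\Pi(\rho)$. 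Hence, for an arbitrary $P\in\Pi(\rho)$, running the same construction already yields a family satisfying $(1)$, $(2)$, $(4)$; and if $\Co(P)=+\infty$ then $(3)$ is vacuous and there is nothing left to prove. So we may assume $\Co(P)<+\infty$, which forces $P(D_0)=0$. It then remains to replace $P$ by off--diagonal plans with the same marginals and asymptotically the same Coulomb cost, to apply Proposition \ref{new-weak} to each of them, and to diagonalize.

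For the off--diagonal approximation, put $m_n:=P(D_{1/n})$, so $m_n\to0$, and let $\nu_n:=P\mid_{D_{1/n}^c}$. This plan vanishes on $D_{1/n}$, its $i$--th marginal is $\rho-\sigma^i_n$ with $\sigma^i_n:=\pi^i_\sharp(P\mid_{D_{1/n}})\le\rho$ as a measure, and $\norm{\sigma^i_n}_{L^1}=m_n$. Since $0\le\sigma^i_n\le\rho$ and the total mass tends to $0$, dominated convergence gives $\norm{\sigma^i_n}_{L^p}\to0$, where $p\ge\frac{d}{d-1}$ is an exponent with $\rho\in L^1\cap L^p$ (available for $\rho\in H$, cf.\ Proposition \ref{L1-L3}). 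I would then restore the missing mass with an off--diagonal correction: let $Q_n$ be an optimal plan for the multimarginal Coulomb problem whose marginals are all equal to $\sigma^i_n/m_n$ (in the symmetric case the $\sigma^i_n$ coincide). The estimate in the proof of Proposition \ref{L1-L3} bounds $\Co(Q_n)=C(\sigma^i_n/m_n)$ by a multiple of $\norm{\sigma^i_n/m_n}_{L^1\cap L^p}$, so that $\Co(m_nQ_n)\le\binom N2 M\big(m_n+\norm{\sigma^i_n}_{L^p}\big)\to0$; and since $\sigma^i_n/m_n$ is absolutely continuous its concentration is $0$, so Theorem \ref{de-pascale} applies and $Q_n\mid_{D_{\alpha_n}}=0$ for some $\alpha_n>0$. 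Setting
\[
 P^{(n)}:=\nu_n+m_n\,Q_n,
\]
one checks $P^{(n)}\in\Pi(\rho)$, that $P^{(n)}\mid_{D_\alpha}=0$ for $\alpha<\min(1/n,\alpha_n)$, that $\norm{P^{(n)}-P}_{TV}\le2m_n\to0$, and that $\Co(P^{(n)})\le\Co(P)+\Co(m_nQ_n)$, whence $\limsup_n\Co(P^{(n)})\le\Co(P)$. (If $P$ is symmetric one may take $Q_n$ symmetric, so that $P^{(n)}$, and all the plans produced below, are symmetric as well.)

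To conclude, apply Proposition \ref{new-weak} to each off--diagonal plan $P^{(n)}$, obtaining families $\{P^{(n)}_\eps\}_\eps$ satisfying $(1)$--$(4)$ relative to $P^{(n)}$; the kinetic bound is $N(\norm{\sqrt\rho}_{H^1}^2+K/(4\eps^2))$ with the \emph{same} $K=K(d)$ and the same $\rho$, since $P^{(n)}\in\Pi(\rho)$. By Lemma \ref{prok}, $\Pi(\rho)$ is weakly compact, hence metrizable; fix such a metric $\delta$ and choose $\eps_n\downarrow0$ with $\delta(P^{(n)}_\eps,P^{(n)})\le1/n$ and $\Co(P^{(n)}_\eps)\le\Co(P^{(n)})+1/n$ for every $\eps\le\eps_n$, then define $P_\eps:=P^{(n)}_\eps$ for $\eps\in(\eps_{n+1},\eps_n]$. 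Properties $(1)$ and $(4)$ hold by construction, $(2)$ follows from $\delta(P_\eps,P)\le\delta(P^{(n)}_\eps,P^{(n)})+\delta(P^{(n)},P)\to0$, and $(3)$ from $\limsup_{\eps\to0}\Co(P_\eps)\le\limsup_n(\Co(P^{(n)})+1/n)\le\Co(P)$.

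I expect the off--diagonal correction to be the main obstacle. The deficient marginals $\sigma^i_n/m_n$ need not, by themselves, admit any off--diagonal coupling, and it is precisely the finiteness of their Coulomb cost (Proposition \ref{L1-L3}) combined with Theorem \ref{de-pascale} --- applicable only because absolute continuity forces the concentration to vanish --- that guarantees such a coupling exists and has small cost. For the genuinely non--symmetric case, where the $\sigma^i_n$ differ, one has to invoke the corresponding off--diagonality result for multimarginal transport with non--equal marginals.
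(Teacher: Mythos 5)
Your proof follows essentially the same route as the paper's: split $P$ into its restriction outside and inside a diagonal strip, replace the near-diagonal part by a (rescaled) optimal plan with the same marginals, invoke Proposition \ref{L1-L3} and Theorem \ref{de-pascale} to make this replacement off-diagonal, check weak convergence (the paper via a total-variation estimate identical to yours) and the cost inequality, then apply Proposition \ref{new-weak} to each off-diagonal approximant and diagonalize. The only minor differences are that the paper uses a continuous parameter $r$ in place of $1/n$, and bounds the cost of the replacement piece directly by the optimality comparison $C(\tilde P_r)\le\lambda_r\,C(P|_{D_r})$ rather than by sending $\Co(m_nQ_n)\to 0$ through the $L^p$ estimate; your explicit remark on the symmetric choice of the optimal replacement plan corresponds to the paper's Remark following the proposition.
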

\begin{proof}
 If $C(P) = \infty$ there is nothing to prove. Suppose $C(P) = K < \infty$. Let $r >0$ be a parameter, and split
\[
 P = Q_r + P|_{D_r}.
\]
Let $\sigma_r$ be the marginals of $Q_r$, and $\tilde{\rho}_r$ those of $P|_{D_r}$; clearly $\sigma_r + \tilde{\rho}_r = \rho$. Since $\rho \in L^1 \cap L^{\frac{d}{d-2}}$ by Sobolev embedding, and $\tilde{\rho}_r \leq \rho$ pointwise, we have $\tilde{\rho}_r \in L^1 \cap L^{\frac{d}{d-2}}$
Although $\tilde{\rho}_r$ needs not to be a probability measure on $\R^d$, we can suppose there exists $\lambda_r > 0$ such that
\[
 \int_{\R^d} \tilde{\rho}_r (x) dx = \frac{1}{\lambda_r} < 1,
\]
otherwise $P|_{D_r} = 0$ and we get the result directly by Proposition \ref{new-weak}.
Let now $\tilde{P}_r$ be an optimal transport plan in $\Pi(\lambda_r \tilde{\rho}_r)$, and define
\[
 P_r = Q_r + \frac{\tilde{P}_r}{\lambda_r},
\]
which lies in $\Pi(\rho)$. 
On the one hand we have the following
\begin{lmm} \label{P_r-P}
 $P_r \wconv P$. 
\end{lmm}
\begin{proof} Recall that $C(P)$ is finite to get
\[
 K = C(P) \geq C(P|_{D_r}) \geq \frac{1}{r} P(D_r),
\]
and \emph{a fortiori} for $\tilde{P}_r/\lambda_r$ due to the optimality. Hence
\[
 \lim_{r \to 0} P(D_r) = \lim_{r \to 0} \frac{\tilde{P}_r(D_r)}{\lambda_r} = 0.
\]

Take $f \in C_b(\rnd)$, and estimate
 \begin{align*}
  \abs{\int f(X) dP_r(X) - \int f(X) dP(X)} &= \abs{\int f(X) dP|_{D_r}(X) - \frac{1}{\lambda_r} \int f(X) d\tilde{P}_r(X)} \\
  {} &\leq \norm{f}_{\infty} \left(P(D_r) + \frac{\tilde{P}_r(D_r)}{\lambda_r} \right) \to 0
 \end{align*}
 as $r \to 0$.

\end{proof}

On the other hand,
\[
 C(P_r) = C(Q_r) + \frac{C(\tilde{P}_r)}{\lambda_r} \leq C(Q_r) + C(P|_{D_r}) = C(P), 
\]
thus
\[
 \limsup_{r \to 0} C(P_r) \leq C(P).
\]

Thanks to Proposition \ref{L1-L3} $C(\tilde{P}_r)$ is finite, and by Theorem \ref{de-pascale} there exists $\alpha = \alpha(r) > 0$ such that $\tilde{P}_r$ is supported outside $D_\alpha$.\footnote{Observe that $\alpha(r)$ may be chose decreasing as $r \to 0$, as follows from Theorem \ref{de-pascale}.} Recall now Proposition \ref{new-weak} to find $\varphi_{\eps,r}$ weakly converging to $P_r$ as $\eps \to 0$, with
\[
 \int \abs{\nabla \varphi_{\eps,r} (X)}^2 dX \leq N \left( \norm{\sqrt{\rho}}^2_{H^1} + \frac{K}{4\eps^2} \right)
\]
and
\[
 \limsup_{\eps \to 0} C(\varphi_{\eps,r}) = C(P_r).
\]

It suffices now to take $\gra{\varphi_{r,r}}_{r > 0}$ to get the thesis of Proposition \ref{new-weak} for general plans. In fact, (i), (iii) and (iv) are already clear; given $\delta>0$, let $R$ be such that
\[
 \int_{\gra{\abs{X} > R}} dP_r (X) \leq \delta. 
\]

Note that $R$ may be chose indipendent from $r$, since the marginals of $P_r$ are all equal to $\rho$, and we may choose $K \subseteq \R^d$ compact such that
\[
 \int_{K} \rho(x) dx \leq \frac{\delta}{N},
\]
and $R$ sufficiently large such that $K^N \subseteq B(0,R)_{\rnd}$.

Now to prove weak convergence take $\phi \in C_b(\rnd)$ and proceed as in the previous paragraph to estimate
\[
 \abs{\int \phi(X) \varphi^2_{r,r}(X) dX - \int \phi(X) dP_r(X)},
\]
using in addiction Lemma \ref{P_r-P} to estimate
\[
 \abs{\int \phi(X) dP_r(X) - \int \phi(X) dP(X)}.
\]

\end{proof}

\begin{rmk} Carefully following the constructions in Propositions 3.7 one may observe that if $P$ is permutations invariant then the approximating $P_\varepsilon$ have the same property. Some care is needed in Proposition 3.16 where, when choosing the minimizer $\tilde{P}_r$ on the strip around the diagonal, one has to choose the symmetric minimizer.
\end{rmk}

\subsection{Constructing wave-functions}

A wave-function depends on $N$ space-spin variables. In the previous subsections we worked mainly in $\rnd$, since we were considering transport plans in $\Pi(\rho)$. To introduce 
the spin we will separate the spin dependence as follows: for every $s$ binary string of length $N$ we consider the function $\psi_s(x_1, \dotsc, x_N) = \psi(x_1, s_1, \dotsc, x_N, s_N)$, then we describe $\psi$ as a $2^N$-dimensional vector
\[
 \boldsymbol{\psi}(z_1, \dotsc, z_N) = \left( \psi_s(X) \right)_{s \in S}.
\]

As an example, if $N = 2$ we would have
\[
 \boldsymbol{\psi}(z_1, z_2) = \begin{pmatrix} \psi_{00}(x_1, x_2) \\ \psi_{01}(x_1, x_2) \\ \psi_{10}(x_1, x_2) \\ \psi_{11}(x_1, x_2) \end{pmatrix},
\]
and for $N=3$ a wave-function would be represented as
\[
 \boldsymbol{\psi}(z_1, z_2, z_3) = \begin{pmatrix} \psi_{000}(x_1, x_2,x_3) \\ \psi_{001}(x_1, x_2,x_3) \\ \psi_{010}(x_1, x_2,x_3) \\ \psi_{100}(x_1, x_2,x_3) \\ \psi_{110}(x_1,x_2,x_3) \\ \psi_{101}(x_1,x_2,x_3) \\ \psi_{011}(x_1,x_2,x_3) \\ \psi_{111}(x_1,x_2,x_3) \end{pmatrix}.
\]

Note that now the density $\abs{\psi(X)}^2$ is simply the square of the Euclidean norm of the vector $\boldsymbol{\psi}$, and the same holds for $\nabla \boldsymbol{\psi}$, once we set
\[
 \nabla \boldsymbol{\psi}(z_1,\dotsc,z_N) = \left( \nabla \psi_s(X) \right)_{s \in S}.
\]

Let us take now a fermionic (i.e., antisymmetric) wave-function $\psi$, and consider a spin state $s = (s_1,\dotsc,s_N)$. If $i < j$ are such that $s_i = s_j$, consider $\sigma = (i\;j) \in \mathfrak{S}_N$ to get
\begin{align*}
 \psi_s (x_1,\dotsc,x_N) &= \mbox{sgn}(\sigma) \psi_{\sigma(s)} (x_{\sigma(1)},\dotsc,x_{\sigma(N)}) \\
 {} &= -\psi_s(x_1,\dotsc,x_j,\dotsc,x_i,\dotsc,x_N).
\end{align*}
Hence we get the following
\begin{rmk}
 If $\psi$ is fermionic and $s$ is a spin state, $\psi_s$ is \emph{separately} antisymmetric with respect to the spatial variables such that $s_j = 0$, and with respect to the spatial variables such that $s_j = 1$.
\end{rmk}

Consider now two spin states $s$ and $s'$ with the same number of ones and zeroes. Then $\psi_s$ and $\psi_{s'}$ are related:
 taking $\sigma \in \mathfrak{S}_N$ such that $\sigma(s) = s'$, we get
\[
 \psi_{s'}(x_{\sigma(1)},\dotsc,x_{\sigma(N)}) = \mbox{sgn}(\sigma) \psi_s (x_1, \dotsc, x_N).
\]
These observations will be implicit in the following.

\subsection{Fermionic wave-functions with given density}

Suppose we have $\alpha > 0$ and a symmetric funtion $\psi(x_1,\dotsc,x_N)\geq 0$ of $H^1$ with the property that
\begin{equation} \label{diag-away}
 \psi(X) = 0 \smallspace \text{if $\abs{x_i-x_j} < \alpha$ for some $i\neq j$}.
\end{equation}

We wonder if there exists a fermionic wave function $\boldsymbol{\psi}$ such that
\[
 \sum_{s\in S} \abs{\boldsymbol{\psi}(x_1,s_1,\dotsc,x_N,s_N)}^2 = \psi^2(X),
\]
and
\[
 \norm{\boldsymbol{\psi}}_{H^1} \leq C \norm{\psi}_{H^1}
\]
for a suitable constant $C$. In fact, we managed to prove the following

\begin{prop} \label{fermionic} For $N = 2, 3$, $d=3,4$, given $\psi \in H^1$ symmetric, with $\psi |_{D_\alpha} = 0$ for some $\alpha > 0$, there exists $\bpsi$ fermionic such that
 \[
  \sum_{s \in S} \abs{\bpsi_s(X)}^2 = \psi^2(X)
 \]
 and
 \[
  \sum_{s \in S} \abs{\nabla \bpsi_s(X)}^2 \leq \abs{\nabla \psi(X)}^2 + \frac{C}{\alpha^2} \psi^2(X).
 \]
\end{prop}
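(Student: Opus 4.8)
The plan is to build $\bpsi$ explicitly by distributing the nonnegative symmetric amplitude $\psi$ among the spin sectors and then "antisymmetrizing by hand" inside each sector. Since the cases $N=2,3$ are small, one can afford a case analysis. For $N=2$, the spin states are $00,11$ (one zero/one one: $01,10$) and the only constraint is that $\psi_{00}$ must be antisymmetric in $(x_1,x_2)$, $\psi_{11}$ likewise, and $\psi_{01},\psi_{10}$ related by the transposition. The idea is to put \emph{all} the mass into the mixed sector: set $\bpsi_{01}(x_1,x_2)=\tfrac{1}{\sqrt2}\,\theta(x_1,x_2)\psi(x_1,x_2)$ and $\bpsi_{10}(x_1,x_2)=-\bpsi_{01}(x_2,x_1)$, where $\theta$ is a fixed $\{\pm1\}$-valued "sign function" that is antisymmetric ($\theta(x_1,x_2)=-\theta(x_2,x_1)$), e.g. $\theta=\sgn(h(x_1)-h(x_2))$ for a generic linear functional $h$. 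Then $\sum_s|\bpsi_s|^2=\tfrac12\psi^2+\tfrac12\psi^2=\psi^2$, the symmetry condition $\psi(X)=0$ on $D_\alpha$ guarantees that the jump set of $\theta$ (contained in $D_0$) is not seen, so no distributional derivative of $\theta$ appears, and on the complement of $D_\alpha$ the function $\psi$ is Lipschitz-controlled near that set; the gradient estimate $\sum_s|\nabla\bpsi_s|^2\le|\nabla\psi|^2+\tfrac{C}{\alpha^2}\psi^2$ then follows because $\nabla(\theta\psi)=\theta\nabla\psi$ a.e., with the $C/\alpha^2$ term absorbing the usual commutator/cut-off error one gets when justifying that $\theta\psi\in H^1$ via a mollification of $\theta$ away from $D_0$.

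For $N=3$ the sectors split by number of ones: $000$, $\{001,010,100\}$, $\{011,101,110\}$, $111$. Here the genuinely constrained sectors are the "pure" ones $000$ and $111$, which would require a function antisymmetric in all three variables; but a nonnegative $\psi$ cannot be used there without losing too much. So again I would place the mass in the mixed sectors (two ones / one zero, and one one / two zeros), where only a \emph{pairwise} antisymmetry is required: in the sector $s$ with, say, $s_i=s_j=0$, $s_k=1$, we need $\psi_s$ antisymmetric in $(x_i,x_j)$ only. Thus set, for the three states $s$ with exactly one "1", $\bpsi_s(X)=\tfrac{1}{\sqrt{3}}\,\theta_{ij}(X)\psi(X)$ where $ij$ is the pair of equal spins in $s$ and $\theta_{ij}=\sgn(h(x_i)-h(x_j))$; do the symmetric thing for the three states with exactly one "0". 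One checks $\sum_{s}|\bpsi_s|^2=\big(3\cdot\tfrac13+3\cdot\tfrac13\big)\psi^2=2\psi^2$ — so renormalize by $1/\sqrt2$ — and that the inter-sector relations forced by antisymmetry under a general $\sigma\in\mathfrak S_3$ are satisfied because $h$ is a fixed linear functional and $\psi$ is symmetric; the pure sectors $000,111$ are set to $0$, which is legitimate. The gradient bound is again $\nabla(\theta_{ij}\psi)=\theta_{ij}\nabla\psi$ off $D_0$, with the $C/\alpha^2\,\psi^2$ error coming from making this rigorous near the singular set, exactly as for $N=2$.

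The main obstacle — and the reason for the dimensional restriction $d\ge3$ — is showing that $\theta\psi\in H^1$ with the stated gradient bound, i.e. controlling the "error" created by the sign-function's jump across $D_0$. Although $\psi\equiv0$ on the wider strip $D_\alpha$, one must verify that no surface term survives: the natural route is to mollify $\theta$ at scale $\eta\ll\alpha$, so that $\nabla\theta_\eta$ is supported in an $\eta$-neighborhood of the hyperplane $\{h(x_i)=h(x_j)\}$ where $\psi=0$, estimate $|\nabla\theta_\eta|\lesssim 1/\eta$ there, and use that $\psi$ vanishes to first order approaching $D_0$ from $D_\alpha^c$ to get $\int|\nabla\theta_\eta|^2\psi^2\lesssim \alpha^{-2}\int\psi^2$ uniformly in $\eta$, then pass to the limit. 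This estimate needs the codimension of $D_0$ (which grows with $d$ and with the number of coinciding coordinates) to be large enough that the singular contributions of $1/|x_i-x_j|$-type terms and of the jump set are integrable; this is where $d=2,3,4$ and $N=2,3$ enter, and where the argument does not obviously extend. I would therefore spend the bulk of the proof on this mollification-and-limit step, treating the algebra of the spin-sector bookkeeping (the $\sqrt{N}$-type normalizations and the $\sigma$-covariance relations) as routine.
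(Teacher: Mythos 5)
Your construction rests on a single $\{\pm1\}$-valued odd function $\theta(x_1,x_2)=\sgn(h(x_1)-h(x_2))$, and the argument hinges on the claim that ``the jump set of $\theta$ (contained in $D_0$) is not seen'' because $\psi$ vanishes on $D_\alpha$. That claim is false as soon as $d\ge2$. The jump set of $\sgn(h(x_1)-h(x_2))$ for a linear $h\colon\R^d\to\R$ is the hyperplane $\{h(x_1)=h(x_2)\}$ in $\R^{2d}$, which has codimension $1$, whereas $D_0=\{x_1=x_2\}$ has codimension $d$. For $d\ge2$ the hyperplane contains plenty of points with $|x_1-x_2|$ arbitrarily large (any $x_2=x_1+v$ with $v\in\ker h\setminus\{0\}$), so $\psi$ certainly does not vanish along it. Consequently $\theta\psi$ has a genuine jump across a hypersurface on which $\psi\neq0$; this produces a surface measure in $\nabla(\theta\psi)$, so $\theta\psi\notin H^1$, and no $C/\alpha^{2}\,\psi^2$ term can absorb a distributional delta. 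Your mollification paragraph repeats the same unjustified premise (``an $\eta$-neighborhood of the hyperplane $\{h(x_i)=h(x_j)\}$ \emph{where $\psi=0$}''), which is where the argument breaks.

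The defect is structural, not just a bad choice of $h$: for $d\ge2$ there is \emph{no} continuous $\{\pm1\}$-valued function on $\{|x_1-x_2|\ge\alpha\}$ that is odd under $(x_1,x_2)\mapsto(x_2,x_1)$. Writing $v=x_1-x_2$, the domain $\{|v|\ge\alpha\}\subset\R^d$ is path-connected for $d\ge2$, so any continuous map to the two-point set $\{\pm1\}$ is constant, and a constant cannot satisfy $\theta(-v)=-\theta(v)$. This is precisely the obstruction the paper circumvents. Rather than a single $\{\pm1\}$-valued sign factor, the paper multiplies $\psi$ by a \emph{pair} of smooth complex-valued phase functions $g_1,g_\xi$ (placed in the \emph{pure} spin sectors $00$ and $11$, not the mixed ones), built from smooth scalar profiles $a,b$ with $a^2+b^2=1$, $a$ odd with $a=\pm1$ outside $[-r,r]$, $b$ even supported in $[-r,r]$, and $|a'|,|b'|\le k/r$. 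The nested combination $a_1,\,b_1a_2,\,b_1b_2a_3,\dots$ paired with roots of unity gives $|g_1|^2+|g_\xi|^2=a_1^2+b_1^2a_2^2+b_1^2b_2^2a_3^2+\dots$, and the telescoping identity $a_j^2+b_j^2=1$ turns this into $1$ exactly on the set where $\psi\neq0$ (since the place where the telescope ``fails,'' namely $|x_j-y_j|<r$ for all $j$, is inside $D_\alpha$). The derivative bound $C/\alpha^2$ then comes from the explicit bound $|a'|,|b'|\le k/r$ on the smooth profiles, with no distributional terms, and a cancellation lemma (Lemma 3.14 in the paper) kills the cross term $\psi\nabla\psi\cdot\nabla(|g_1|^2+|g_\xi|^2)$. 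The extra real dimensions carried by $(g_1,g_\xi)\in\C^2$ compared to your $\theta\in\{\pm1\}$ are exactly what is needed to make the phase continuous (indeed smooth) and odd off $D_\alpha$; this is not a routine bookkeeping step but the crux of the proof, and it is absent from your proposal.
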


In \cite{cotar2013density}, the following $\bpsi$ is given as a wavefunction:
\begin{align*}
 \bpsi_{00} &= 0 \\
 \bpsi_{01} &= \psi(x_1,x_2) \\
 \bpsi_{10} &= -\psi(x_1,x_2) \\
 \bpsi_{11} &= 0,
\end{align*}
which is in fact fermionic with bounded kinetic energy. Note, however, that this construction cannot work for a larger number of particles. Indeed, if a binary string $s$ has length $N\geq 3$, then there are at least two ones, or two zeros, on places $i\neq j$ -- thus the corresponding function $\bpsi_s$ must change sign for a suitable flip of the variables (namely, $x_i \mapsto x_j$, $x_j \mapsto x_i$). We will exhibit a wave-function $\bpsi$ (with square density $\psi^2$) such that $\bpsi_{10} = \bpsi_{01} = 0$. This forces $\bpsi_{00}$ and $\bpsi_{11}$ to be different from zero and antisymmetric. We remark also that, for this kind of construction, the condition \eqref{diag-away} is ``morally necessary''.

\subsection{Construction for $N=2$, $d=3$}

The variable $X$ will be expanded as $X = (x,y) = (x_1,x_2,x_3; y_1,y_2,y_3)$. Set $r = \alpha/\sqrt{3}$, and take $\xi = -\frac{1}{2} + i\frac{\sqrt{3}}{2}$ a primitive cube root of 1. The key point is to choose two auxiliary $C^{\infty}$ functions $a,b \colon \R \to \R$ such that
 \begin{itemize}
  \item[(i)] $a^2+b^2=1$;
  \item[(ii)] $b$ is symmetric, $b(t) = 0$ if $\abs{t} \geq r$;
  \item[(iii)] $a$ is antisymmetric, $a(t) = -1$ if $t \leq -r$, $a(t) = 1$ if $t \geq r$;
  \item[(iv)] $\abs{a'}, \abs{b'} \leq k/r$.
 \end{itemize}
 
Note that the constant $k>1$ may be chosen arbitrarily close to 1. To shorten the notation we set $a_j = a(x_j-y_j)$, $b_j = b(x_j-y_j)$ for $j=1,2,3$. Now define
\begin{align*}
 g_1(x,y) &= \frac{1}{\sqrt{3}} (a_1+b_1 a_2+ b_1 b_2 a_3) \\
 g_\xi(x,y) &= \frac{\sqrt{2}}{\sqrt{3}} (a_1+\xi b_1 a_2+\xi^2 b_1 b_2 a_3).
\end{align*}

By direct computation one sees that $\abs{g_1}^2 + \abs{g_\xi}^2 = a_1^2 + b_1^2 a_2^2 + b_1^2 b_2^2 a_3^2$, since clearly $\abs{a_1+b_1 a_2+ b_1 b_2 a_3 } = \abs{a_1+\xi b_1 a_2+\xi^2 b_1 b_2 a_3 }$. 
Next we define the wave-function
\begin{align*}
 \bpsi_{00}(x,y) &= g_1(x,y) \psi(x,y) \\
 \bpsi_{01}(x,y) &= 0 \\
 \bpsi_{10}(x,y) &= 0 \\
 \bpsi_{11}(x,y) &= g_\xi (x,y) \psi(x,y). \\
\end{align*}

The following equality is crucial in the construction

\begin{equation} \label{claim}
\psi^2(x,y) b_1^2 b_2^2 a_3^2 = \psi^2(x,y) b_1^2 b_2^2.
\end{equation}
 
This holds because where $\abs{a_3}^2 = 1$, i.e. where $\abs{x_3-y_3} \geq r$, the equality holds. It also holds where $b_1 = 0$ or $b_2 = 0$, 
i.e. where $\abs{x_1-y_1} \geq r$ or $\abs{x_2-y_2} \geq r$. It is left the region where $\abs{x_j-y_j} \leq r$ for every $j = 1,2,3$, but there it holds
\[
 \abs{x-y} = \sqrt{\abs{x_1-y_1}^2 + \abs{x_2-y_2}^2 + \abs{x_3-y_3}^2} \leq r\sqrt{3} = \alpha,
\]
and hence the equality holds because $\psi^2(x,y) = 0$.

Now one can compute
\begin{align*}
 \abs{\bpsi_{00}}^2 + \abs{\bpsi_{11}}^2 &= \left( \abs{g_1(x,y)}^2 + \abs{g_\xi(x,y)}^2 \right) \psi^2(x,y) \\
 {} &= \left( a_1^2 + b_1^2 a_2^2 + b_1^2 b_2^2 a_3^2 \right) \psi^2(x,y) \\
 {} &= \left( a_1^2 + b_1^2 a_2^2 + b_1^2 b_2^2 \right) \psi^2(x,y) \\
 {} &= \left( a_1^2 + b_1^2 \right) \psi^2(x,y) = \psi^2(x,y).
\end{align*}

Next come the estimates for the derivatives. Since
\begin{align*}
 \nabla \bpsi_{00}(x,y) &= \psi(x,y) \nabla g_1(x,y) + g_1(x,y) \nabla \psi(x,y) \\
 \nabla \bpsi_{11}(x,y) &= \psi(x,y) \nabla g_\xi(x,y) + g_\xi(x,y) \nabla \psi(x,y),
\end{align*}
it follows
\begin{align*}
 \abs{\nabla \bpsi(x,y)}^2 = &\abs{\nabla \psi(x,y)}^2 + \left( \abs{\nabla g_1(x,y)}^2 + \abs{\nabla g_\xi (x,y)}^2 \right) \psi^2(x,y) \\
 {} &+ \psi(x,y) \nabla \psi(x,y) \cdot v(x,y)
\end{align*}
where
\[
 v(x,y) = 2g_1(x,y) \nabla g_1(x,y) + g_{\bar{\xi}}(x,y) \nabla g_\xi(x,y) + g_\xi(x,y) \nabla g_{\bar{\xi}}(x,y).
\]
We claim that $\psi(x,y) v(x,y) = 0$. Again by direct computation one gets
\[
 v = 6 [ a_1\nabla a_1 + b_1 a_2 \nabla (b_1 a_2) + b_1 b_2 a_3 \nabla (b_1b_2a_3) ]. 
\]
Since the next steps work for general $d$, we group the result in the following

\begin{lmm} \label{no-crossed} Let $a_j,b_j$ be defined as before and evaluated in the point $x-y$. Then
\[
 [a_1 \nabla a_1 + (b_1a_2) \nabla (b_1a_2) + \dotsb + (b_1b_2 \dotsm b_{d-1}a_d) \nabla (b_1b_2\dotsm b_{d-1}a_d)] \psi(x,y) = 0.
\]
\end{lmm}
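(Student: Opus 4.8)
The plan is to recognize the bracketed expression as (half) the gradient of a sum of squares, collapse that sum by a telescoping argument based on property (i), and then invoke the support hypothesis \eqref{diag-away} on $\psi$. Concretely, I would introduce the shorthand $u_j := b_1b_2\cdots b_{j-1}a_j$ for $j=1,\dots,d$, with the convention that the empty product ($j=1$) equals $1$, so that $u_1=a_1$. With this notation the quantity between brackets is exactly $\sum_{j=1}^{d}u_j\,\nabla u_j=\tfrac12\,\nabla\!\bigl(\sum_{j=1}^{d}u_j^2\bigr)$, where $\nabla$ denotes the full gradient in the $2d$ variables $(x,y)$.

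The second step is the telescoping. Property (i) gives $a_j^2=1-b_j^2$, hence $u_j^2=b_1^2\cdots b_{j-1}^2a_j^2=b_1^2\cdots b_{j-1}^2-b_1^2\cdots b_j^2$, and therefore $\sum_{j=1}^{d}u_j^2=1-b_1^2b_2^2\cdots b_d^2$. Consequently the bracketed expression equals $-\tfrac12\,\nabla\!\bigl(b_1^2b_2^2\cdots b_d^2\bigr)$, and it remains only to prove that $\psi(x,y)\,\nabla\!\bigl(b_1^2\cdots b_d^2\bigr)=0$.

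For this last point, differentiating $b_1^2\cdots b_d^2=\prod_{j=1}^{d}b(x_j-y_j)^2$ with respect to any $x_k$ (or $y_k$) produces a factor $b(x_k-y_k)\,b'(x_k-y_k)$ multiplied by $\prod_{i\neq k}b(x_i-y_i)^2$; in particular $\nabla\!\bigl(b_1^2\cdots b_d^2\bigr)$ is supported in the set where $b(x_j-y_j)\neq0$, i.e. $\abs{x_j-y_j}<r$, for \emph{every} $j$. On that set $\abs{x-y}^2=\sum_{j=1}^{d}\abs{x_j-y_j}^2<dr^2=\alpha^2$, so $\psi(x,y)=0$ by \eqref{diag-away}; at all remaining points the gradient itself vanishes. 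Hence $\psi(x,y)\,\nabla\!\bigl(b_1^2\cdots b_d^2\bigr)=0$, which proves the Lemma.

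There is no serious obstacle here: the only delicate points are the bookkeeping in the telescoping identity and the remark that the radius $r=\alpha/\sqrt d$ has been calibrated precisely so that the box $\gra{\abs{x_j-y_j}<r\ \text{for all }j}$ is contained in the ball $\gra{\abs{x-y}<\alpha}$ on which $\psi$ is known to vanish.
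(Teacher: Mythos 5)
Your proof is correct and is essentially the same telescoping argument as the paper's: the ``chain-reaction'' formula in the paper is exactly the gradient of the identity you use, namely $(b_1\cdots b_k a_{k+1})^2 + (b_1\cdots b_{k+1})^2 = (b_1\cdots b_k)^2$, which summed gives $\sum_j (b_1\cdots b_{j-1}a_j)^2 = 1 - b_1^2\cdots b_d^2$. The only organizational difference is where the support hypothesis \eqref{diag-away} enters: the paper applies it at the outset, replacing $\psi\,b_1\cdots b_{d-1}a_d^2$ by $\psi\,b_1\cdots b_{d-1}$ so the telescope collapses to $a_1\nabla a_1 + b_1\nabla b_1 = 0$, whereas you telescope fully to $-\tfrac12\nabla\bigl(b_1^2\cdots b_d^2\bigr)$ and then note that this gradient is supported precisely in the box where $\psi$ vanishes.
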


\begin{proof} Observe that
\begin{equation} \label{gradients}
 \begin{array}{rcl}
  \nabla a_1(x,y) &= &(a'(x-y),0,\dotsc,0,-a'(x-y),0,\dotsc,0) \\
  \nabla b_1(x,y) &= &(b'(x-y),0,\dotsc,0,-b'(x-y),0,\dotsc,0)
 \end{array}
\end{equation}
and similarly for the other gradients. Moreover, from $a^2+b^2 = 1$ it follows $aa'+bb' = 0$, while $\psi b_1 b_2 \dotsm b_{d-1} \nabla a_d = 0$ and $\psi b_1b_2 \dotsm b_{d-1} a_d^2 = \psi b_1b_2 \dotsm b_d$ for the same reason as in claim \ref{claim}. Hence we have
\begin{IEEEeqnarray*}{rCl}
 \IEEEeqnarraymulticol{3}{l}{[a_1 \nabla a_1 + (b_1a_2) \nabla (b_1a_2) + \dotsb + (b_1b_2 \dotsm b_{d-1}a_d) \nabla (b_1b_2\dotsm b_{d-1}a_d)] \psi} \\
 \quad &= &[a_1 \nabla a_1 + (b_1a_2) \nabla (b_1a_2) + \dotsb + (b_1b_2 \dotsm b_{d-1}) \nabla (b_1b_2\dotsm b_{d-1})] \psi.
\end{IEEEeqnarray*}

A ``chain reaction'' is now generated by the following formula, valid for every $k \geq 1$:
\begin{IEEEeqnarray*}{rCl}
 \IEEEeqnarraymulticol{3}{l}{(b_1\dotsm b_k a_{k+1}) \nabla (b_1 \dotsm b_k a_{k+1}) + (b_1\dotsm b_k b_{k+1}) \nabla (b_1 \dotsm b_k b_{k+1})} \\
 \quad &= &(b_1\dotsm b_k a^2_{k+1}) \nabla (b_1 \dotsm b_k) + (b^2_1\dotsm b^2_k) a_{k+1} \nabla a_{k+1} \\
 && {} + (b_1\dotsm b_k b^2_{k+1}) \nabla (b_1 \dotsm b_k) + (b^2_1\dotsm b^2_k) b_{k+1} \nabla b_{k+1} \\
 &= &(b_1\dotsm b_k) \nabla (b_1 \dotsm b_k).
\end{IEEEeqnarray*}
\end{proof}
It is left to estimate $\left( \abs{\nabla g_1(x,y)}^2 + \abs{\nabla g_\xi (x,y)}^2 \right) \psi(x,y)$. Again we compute directly
\begin{align*}
 \abs{\nabla g_1(x,y)}^2 + \abs{\nabla g_\xi (x,y)}^2 &= 3\left( \abs{\nabla a_1}^2 + \abs{\nabla (b_1 a_2)}^2 + \abs{\nabla (b_1 b_2 a_3)}^2 \right).
\end{align*}

Note, however, that because of \eqref{gradients} we have $\nabla a_i \cdot \nabla b_j = 0$ and $\nabla b_i \cdot \nabla b_j = 0$ if $i \neq j$. Therefore,
\begin{align*}
 \abs{\nabla g_1(x,y)}^2 + \abs{\nabla g_\xi (x,y)}^2 = &\abs{\nabla a_1}^2 + b_1^2 \abs{\nabla a_2}^2 + a_2^2 \abs{\nabla b_1}^2 + b_2^2 a_3^2 \abs{\nabla b_1}^2 \\
 {} &+ b_1^2 a_3^2 \abs{\nabla b_2}^2 + b_1^2 b_2^2 \abs{\nabla a_3}^2.
\end{align*}
and, using again the idea of claim \eqref{claim},
\begin{IEEEeqnarray*}{rCl}
  \IEEEeqnarraymulticol{3}{l}{\left( \abs{\nabla g_1(x,y)}^2 + \abs{\nabla g_\xi (x,y)}^2 \right) \psi^2(x,y)} \\
  \quad &=& \left( \abs{\nabla a_1}^2 + b_1^2 \abs{\nabla a_2}^2 + a_2^2 \abs{\nabla b_1}^2 + b_2^2 \abs{\nabla b_1}^2 + b_1^2 \abs{\nabla b_2}^2 \right) \psi^2(x,y) \\
  &=& \left( \abs{\nabla a_1}^2 + \abs{\nabla b_1}^2 + b_1^2 \left( \abs{\nabla a_2}^2 + \abs{\nabla b_2}^2 \right) \right) \psi^2(x,y) \\
  &\leq & \frac{8k^2}{r^2} \psi^2(x,y) = \frac{24k^2}{\alpha^2} \psi^2(x,y).
\end{IEEEeqnarray*}

\subsection{Construction for $N=3$, $d=3$}

In this case, let the variable be $X = (x,y,z) = (x_1,x_2,x_3;y_1,y_2,y_3;z_1,z_2,z_3)$, and define as before
\[
 a_j(x,y) = a(x_j-y_j) \smallspace b_1(x,y) = b(x_1-y_1)
\]
for $j=2,3$. As in the case $N=2$, define also
\begin{align*}
 g_1(x,y) &= a_1(x,y) + b_1(x,y) a_2(x,y) + b_1(x,y) b_2(x,y) a_3(x,y) \\
 g_\xi(x,y) &= a_1(x,y) + \xi b_1(x,y) a_2(x,y) + \bar{\xi} b_1(x,y) b_2(x,y) a_3(x,y).
\end{align*}

Now come the definition of the wave-function:
\begin{align*}
 \bpsi_{000}(x,y,z) &= 0 \\
 \bpsi_{001}(x,y,z) &= \frac{1}{3} g_1(x,y)\psi(x,y,z) \\
 \bpsi_{010}(x,y,z) &= -\frac{1}{3} g_1(x,z)\psi(x,y,z) \\
 \bpsi_{100}(x,y,z) &= \frac{1}{3} g_1(y,z) \psi(x,y,z) \\
 \bpsi_{110}(x,y,z) &= \frac{\sqrt{2}}{3} g_\xi(x,y) \psi(x,y,z) \\
 \bpsi_{101}(x,y,z) &= -\frac{\sqrt{2}}{3} g_\xi(x,z) \psi(x,y,z) \\
 \bpsi_{011}(x,y,z) &= \frac{\sqrt{2}}{3} g_\xi(y,z) \psi(x,y,z) \\
 \bpsi_{111}(x,y,z) &= 0.
\end{align*}

It is quite easy to see that $\bpsi$ is indeed fermionic. The fact that
\[
 \sum_{s \in S} \abs{\bpsi_s(x,y,z)}^2 = \psi^2(x,y,z)
\]
is proved exactly in the same way as for $N=2$, and also the gradient estimates, considering the couples $\bpsi_{001}$-$\bpsi_{110}$, $\bpsi_{010}$-$\bpsi_{101}$ and $\bpsi_{100}$-$\bpsi_{011}$. Hence we proved the Proposition \ref{fermionic}, with $C=24k^2$ for $k>1$ arbitrary.

\subsection{The case $d=4$}

A very similar construction may be done for $d=4$. In the case $N=2$ one simply choose
\begin{align*}
 g_1(x,y) = \frac{1}{\sqrt{2}} (a_1 + ib_1a_2 + b_1b_2a_3 + ib_1b_2b_3a_4) \\
 g_2(x,y) = \frac{1}{\sqrt{2}} (a_1 + ib_1a_2 - b_1b_2a_3 - ib_1b_2b_3a_4).
\end{align*}
and
\begin{align*}
 \bpsi_{00}(x,y) &= g_1(x,y) \psi(x,y) \\
 \bpsi_{01}(x,y) &= 0 \\
 \bpsi_{10}(x,y) &= 0 \\
 \bpsi_{11}(x,y) &= g_2 (x,y) \psi(x,y). \\
\end{align*}

It is easy to verify that $\abs{g_1(x,y)}^2 + \abs{g_2(x,y)}^2 = a_1^2 + b_1^2a_2^2 + b_1^2b_2^2a_3^2 + b_1^2b_2^2b_3^2a_4^2$, and proceeding as before
\[
 \left( \abs{g_1(x,y)}^2 + \abs{g_2(x,y)}^2 \right) \psi^2(x,y) = \psi^2(x,y).
\]

To estimate the derivatives, note that
\begin{IEEEeqnarray*}{rCl}
 2 \mathfrak{Re} \left( \overline{g_1}\nabla g_1 \right) &= &a_1 \nabla a_1 + a_1 \nabla (b_1b_2a_3) + b_1a_2\nabla (b_1a_2) + b_1a_2 \nabla (b_1b_2b_3a_4) \\
 && {} + b_1b_2a_3 \nabla a_1 + b_1b_2a_3 \nabla b_1b_2a_3 + b_1b_2b_3a_4 \nabla (b_1a_2) \\
 &&{} + b_1b_2b_3a_4 \nabla (b_1b_2b_3a_4) \\
 2 \mathfrak{Re} \left( \overline{g_2}\nabla g_2 \right) &= &a_1 \nabla a_1 - a_1 \nabla (b_1b_2a_3) + b_1a_2\nabla (b_1a_2) - b_1a_2 \nabla (b_1b_2b_3a_4) \\
 && {} - b_1b_2a_3 \nabla a_1 + b_1b_2a_3 \nabla b_1b_2a_3 - b_1b_2b_3a_4 \nabla (b_1a_2) \\
 &&{} + b_1b_2b_3a_4 \nabla (b_1b_2b_3a_4)
\end{IEEEeqnarray*}

This yelds, using Lemma \ref{no-crossed},
\begin{IEEEeqnarray*}{rCl}
 \IEEEeqnarraymulticol{3}{l}{\left( 2 \mathfrak{Re} \left( \overline{g_1}\nabla g_1 \right) + 2 \mathfrak{Re} \left( \overline{g_2}\nabla g_2 \right) \right) \psi} \\
 \quad &= 2[&a_1 \nabla a_1 + b_1a_2\nabla (b_1a_2) + b_1b_2a_3 \nabla (b_1b_2a_3) + b_1b_2b_3a_4 \nabla (b_1b_2b_3a_4)]\psi \\
 &= 0.
\end{IEEEeqnarray*}

Therefore,
\[
 \abs{\nabla \bpsi_{00}(x,y)}^2 + \abs{\nabla \bpsi_{00}(x,y)}^2 = \abs{\nabla \psi}^2 + \left( \abs{\nabla g_1(x,y)}^2 + \abs{\nabla g_2(x,y)}^2 \right) \psi^2
\]
and we conclude with the estimate
\begin{IEEEeqnarray*}{rCll}
  \IEEEeqnarraymulticol{4}{l}{\left( \abs{\nabla g_1}^2 + \abs{\nabla g_2}^2 \right) \psi^2} \\
  \quad &= &\Bigl( &\abs{\nabla a_1}^2 + b_1^2 \abs{\nabla a_2}^2 + a_2^2 \abs{\nabla b_1}^2 + b_2^2 a_3^2 \abs{\nabla b_1}^2 + b_1^2 a_3^2 \abs{\nabla b_2}^2 \\
  &&& {} + b_1^2b_2^2 \abs{\nabla a_3}^2 + b_2^2b_3^2 \abs{\nabla b_1}^2 + b_1^2b_3^2 \abs{\nabla b_2}^2 + b_1^2b_2^2 \abs{\nabla b_3}^2 \Bigr) \psi^2 \\
  &= &\Big[ &\abs{\nabla a_1}^2 + \abs{\nabla b_1}^2 + b_1^2 \left( \abs{\nabla a_2}^2 + \abs{\nabla b_2}^2 \right) \\
  &&& {} + b_1^2 b_2^2 \left( \abs{\nabla a_3}^2 + \abs{\nabla b_3}^2 \right) \Bigr] \psi^2 \\
  &\leq & \IEEEeqnarraymulticol{2}{l}{\frac{12k^2}{\alpha^2} \psi^2 = \frac{36k^2}{\alpha^2} \psi^2},
\end{IEEEeqnarray*}
which shows that in this case $C$ can be chosen $36k^2$ for $k > 1$ arbitrary.

For 3 particles it suffices to repeat the construction of Subsection 4.5 in order to obtain Proposition \ref{fermionic} with $C=36k^2$.


\subsection{$\Gamma-\limsup$-inequality}

Finally we get the $\Gamma$-$\limsup$ inequality, and thus the entire proof, both in the symmetric and the antisymmetric case.

\medskip

\paragraph{\bf Bosonic case}

We can complete the proof of Theorem \ref{bos-gam}:

\begin{proof}{of Th. \ref{bos-gam}}
We proved equicoerciveness in Subsection 3.1 and, for all $P \in \Pi(\rho)$,  in Subsection 3.2 
we already proved the $\Gamma$-$\liminf$ inequality. Thus it is left to find a family of 
bosonic wave-functions $\bpsi_{\hbar}$ such that $P_{\bpsi_{\hbar}} \rightharpoonup P$ and
 \[
  \limsup_{\hbar \to 0} \gra{T_{\hbar}(\bpsi_\hbar) + V_{ee}(\bpsi_\hbar)} \leq \Co_\Sy(P).
 \]
 
 Define $\eps(\hbar) = \sqrt{\hbar}$, and set
 \begin{align*}
  \bpsi_\hbar(x_1, s_1, \dotsc, x_N, s_N) &= \left \{ \begin{array}{ll}
    \psi_{\eps(\hbar)} (x_1, \dotsc, x_N) & \text{if } s_1 = \dotsb = s_N = 0 \\
    0 & \text{otherwise} \end{array}
    \right. \\
    {} &= \begin{pmatrix} \psi_{\eps(\hbar)} (x_1, \dotsc, x_N) \\ 0 \\ \vdots \\ 0 \end{pmatrix},
 \end{align*}
 where $\psi_\eps$ are given by Proposition \ref{new-weak}.
 
 These wave-functions are clearly bosonic, and satisfy $\abs{\bpsi_\hbar(X)}^2 = \psi_{\eps(\hbar)}^2(X)$, $\abs{\nabla \bpsi_\hbar(X)}^2 = \abs{\nabla \psi_{\eps(\hbar)}(X)}^2$, so that
 \begin{align*}
  T_{\hbar}(\bpsi_{\hbar}) &= \frac{\hbar^2}{2} \int \abs{\nabla \psi_{\eps(\hbar)}(X)}^2 dX \leq \frac{N\hbar^2}{2} \left ( \norm{\rho}_{H^2}^2 + \frac{K}{4\eps(\hbar)^2} \right ) \\
  {} &= \frac{N\hbar^2}{2} \norm{\rho}_{H^2}^2 + \frac{KN\hbar}{8},
 \end{align*}
 while
 \[
  V_{ee}(\bpsi_{\hbar}) = \int c(X) \psi_{\eps(\hbar)}^2(X) dX = \Co_\Sy(P_{\eps(\hbar)}).
 \]
 
 Now the thesis follows from Proposition \ref{new-weak}, since
 \[
  \limsup_{\hbar \to 0} \gra{ T_{\hbar}(\bpsi_{\hbar}) + V_{ee}(\boldsymbol{\psi}_{\hbar}) } = \limsup_{\hbar \to 0} V_{ee}(\bpsi_{\hbar}) \leq \Co_\Sy(P).
 \]

\end{proof}

\medskip

\paragraph{\bf Fermionic case} We can complete the proof of Theorem \ref{fer-gam}:

\begin{proof}{of Th. \ref{fer-gam}}
We proved equicoerciveness in Subsection 3.1 and, for all $P \in \Pi(\rho)$, in Subsection 3.2, 
we already proved the $\Gamma$-$\liminf$ inequality. Thus it is left to find a family of 
fermionic wave-functions $\bpsi_{\hbar}$ such that  $P_{\bpsi_{\hbar}} \rightharpoonup P$ and
  \[
  \limsup_{\hbar \to 0} \gra{T_{\hbar}(\bpsi_\hbar) + V_{ee}(\bpsi_\hbar)} \leq \Co_\Sy(P).
 \]
 
 Consider a sequence of functions $\gra{\psi_\eps}$ as in the thesis of Proposition \ref{new-weak}.
 Recall that $\psi_\eps$ is supported outside $D_{\alpha(\eps)}$, where $\alpha(\eps) \searrow 0$ as $\eps \to 0$ -- hence there exists $\alpha^{-1}$ in a right neighbourhood of 0. We may then consider a corresponding family of wave-functions $\gra{\bpsi_\eps}$ given by Proposition \ref{fermionic}. Define
  \[
  \eps(\hbar) = \max \gra{\alpha^{-1}(\sqrt{\hbar}), \sqrt{\hbar}},
  \]
  and observe that $\eps(\hbar) \to 0$ as $\hbar \to 0$. We take $\bpsi_\hbar = \bpsi_{\eps(\hbar)}$ as a recovery sequence.

  It is easy to estimate the kinetic energy:
  \begin{align*}
  T_\hbar(\bpsi_{\eps(\hbar)}) &= \frac{\hbar^2}{2} \int \abs{\nabla \bpsi_{\eps(\hbar)}(X)}^2 dX \\
  {} &\leq \frac{\hbar^2}{2} \gra{\int \abs{\nabla \psi_{\eps(\hbar)}(X)}^2 dX + \frac{C}{\alpha^2(\eps(\hbar))} \int \psi_\eps^2(X) dX} \\
  {} &\leq \frac{\hbar^2}{2} \gra{N \norm{\sqrt{\rho}}^2_{H^1} + \frac{K}{4\eps^2(\hbar)} + \frac{C}{\hbar}} \\
  {} &\leq \frac{\hbar^2}{2} \gra{N \norm{\sqrt{\rho}}^2_{H^1} + \frac{K}{4\hbar} + \frac{C}{\hbar}}
  \end{align*}
  which tends to 0 as $\hbar \to 0$. On the other hand, with the notation of Proposition \ref{new-weak},
  \[
    V_{ee}(\bpsi_\hbar) = \int c(X) \psi_{\eps(\hbar)}^2(X) dX = \Co_\Sy(P_{\eps(\hbar)}).
  \]
 
  Now the thesis follows, since
  \[
   \limsup_{\hbar \to 0} \gra{ T_{\hbar}(\bpsi_\hbar) + V_{ee}(\bpsi_\hbar) } = \limsup_{\hbar \to 0} V_{ee}(\bpsi_\hbar) \leq\Co_\Sy(P).
  \] 
\end{proof}

\subsection{Conclusions}
The $\Gamma-$convergence result of the previous section allow us to prove Theorems \ref{bosonconv} and \ref{fermionconv}:

\begin{proof}{of Th.\ref{bosonconv}.} \ 
Thanks to the formulations (\ref{refbos}) and (\ref{reftrasp}) the thesis follows from Theorem \ref{convminima}, applied to the functionals $\F^{\Sy}_{\hbar}$ and $\Co_\Sy$. 
We proved the $\Gamma-$convergence and equicoercivity in Th. \ref{bos-gam}.
\end{proof}

\begin{proof}{of Th.\ref{fermionconv}}
Thanks to the formulations (\ref{refferm}) and (\ref{reftrasp}) the thesis follows from Theorem \ref{convminima}, applied to the functionals $\F^{\A}_{\hbar}$ and $\Co_\Sy$. 
We proved the $\Gamma-$convergence and equicoercivity in Th. \ref{fer-gam}.
\end{proof}

\section*{Acknowledgement}

The research of the second author is part of the project 2010A2TFX2 {\it Calcolo delle Variazioni} funded by the Italian Ministry of Research and is partially financed by the {\it``Fondi di ricerca di ateneo''} of the University of Pisa and by the INDAM-GNAMPA.

\bibliographystyle{plain}

\bigskip
{\small\noindent
Ugo Bindini:
Scuola Normale Superiore,\\
Piazza dei Cavalieri 5,
56127 Pisa - ITALY\\
{\tt ugo.bindini@sns.it}\\

\bigskip\noindent
Luigi De Pascale:
Dipartimento di Matematica e Informatica,
Universit\`a di Firenze\\
Viale Morgagni 67/A ,
50134 Firenze - ITALY\\
{\tt luigi.depascale@unifi.it}\\
{\tt http://web.math.unifi.it/\textasciitilde depascal/}

\end{document}